%%%%%%%%%%%%%%%%%%%%%%%%%%%%%%%%%%%%%%%%%%%%%%
%%%%%%%%%%%%%%%%%%%%%%%%%%%%%%%%%%%%%%%%%%%%%%
%  decay and regularity for dispersion management solitons for pos average dispersion
% and very general dispersion profiles 
%
%%%%%%%%%%%%%%%%%%%%%%%%%%%%%%%%%%%%%%%%%%%%
\documentclass[reqno,centertags,12pt]{amsart}
\usepackage{amsmath,amsthm,amscd,amssymb}
\usepackage{latexsym}
\usepackage{url}
\usepackage{pstricks}
\usepackage{pst-node}
\usepackage {verbatim}

\addtolength{\textwidth}{1.6in}
\addtolength{\oddsidemargin}{-0.8in}
\addtolength{\evensidemargin}{-0.8in}
\addtolength{\textheight}{0.8in}
\addtolength{\voffset}{-0.4in}

%\renewcommand{\baselinestretch}{2}  %% uncomment for double spaced version

%%%%%%%%%%%%%%%%%%%%%%%%%%%%%%%%%%%%%
%%   THREE COMMANDS FOR MARKING CHANGES -- IN THE MARGIN

%\newcommand{\ch}{}
 %% to invoke write:
%%   \chn{.INITIALS..}
%%%%%%%%%%%%%%%%%%%%%%%%%%%%%%%%%%%%%%%%%%%%%%%%%%%%%%%%%%%%%%%%%%%%%%%%%%
% Rowan's version
\newcommand{\Hmm}[1]{\leavevmode{\marginpar{\tiny%
$\hbox to 0mm{\hspace*{-0.5mm}$\leftarrow$\hss}%
\vcenter{\vrule depth 0.1mm height 0.1mm width \the\marginparwidth}%
\hbox to 0mm{\hss$\rightarrow$\hspace*{-0.5mm}}$\\\relax\raggedright #1}}}
%\newcommand{\Hmm}{}
%%%%%%%%%%%%%%%%%%%%%%%%%%%%%%%%%%%%%%%%%%%%%%%%%%%%%%%%%%%%%%%%%%%%%%%%%%
%%%%%%%%%%%%%%%%%%%%%%%%%%%%%%%%%%%%%

%%%%%%%%%%%%% marginal warnings %%%%%%%%%%%%%%%%
% ON:

% OFF:
%\newcommand{\TK}{}

%%%%%%%%%%%%% fonts/sets %%%%%%%%%%%%%%%%%%%%%%%

\newcommand{\R}{{\mathbb{R}}}

%%%%%%%%%%%%%%%%%%  abbreviations %%%%%%%%%%%%%%%%%%%%%%%%

\newcommand{\ol}{\overline}

\newcommand{\wti}{\widetilde  }

\newcommand{\hatt}{\widehat}
\newcommand{\beq}{\begin{equation}}
\newcommand{\eeq}{\end{equation}}
\newcommand{\bdm}{\begin{displaymath}}
\newcommand{\edm}{\end{displaymath}}
\newcommand{\ba}{\begin{align}}
\newcommand{\ea}{\end{align}}
\newcommand{\bpf}{\begin{proof}}
\newcommand{\epf}{\end{proof}}

\newcommand{\la}{\langle}
\newcommand{\ra}{\rangle}

% use \hat in subscripts
% and upperlimits of int.

     % strong limit
        % oszillation
\newcommand{\essinf}{\mathop{\text{essinf}}}

\newcommand{\supp}{\mathrm{supp}\, }               % support
\newcommand{\dist}{\mathrm{dist}}               % distance

\newcommand{\e}{\mathrm{e}}
\renewcommand{\d}{\mathrm{d}}

\newcommand{\veps}{\varepsilon}

\newcommand{\re}{\mathrm{Re}}

% Absolute value notation
\newcommand{\abs}[1]{\lvert#1\rvert}
  % norm

% trace

\newcommand{\id}{\mathbf{1}}                % identity
       % identity on G

%%%%%%%%%%%%%%%%%%%%%% renewed commands %%%%%%%%%%%%%%%

%\renewcommand{\Re}{\text{\rm Re}}
%\renewcommand{\Im}{\text{\rm Im}}

%%%%%%%%%%%%%%%%%%%%%% operators %%%%%%%%%%%%%%%%%%%%%%
\DeclareMathOperator{\sgn}{sgn}

\allowdisplaybreaks
%%%%%%%%%%%%%%%%%%%%%%%%%%%%%%%%%%%%%%%%%%%%%%
%%%%%%%%%%%%%%%%%%%% end of  definitions
%%%%%%%%%%%%%%%%%%%%%%%%%%%%%%%%%%%%%%%%%%%%%%

\newcommand{\calC}{\mathcal{C}}

\newcommand{\calQ}{\mathcal{Q}}

\newcommand{\calS}{{\mathcal{S}}}

\newtheorem{thm}{Theorem}
\newtheorem{prop}[thm]{Proposition}
\newtheorem{lem}[thm]{Lemma}
\newtheorem{cor}[thm]{Corollary}

\theoremstyle{definition}

\newtheorem{definition}[thm]{Definition}

\newtheorem{rem}[thm]{Remark}
\newtheorem{rems}[thm]{Remarks}
\newtheorem{ass}[thm]{Assumption}

% QED Symbol

% Counters for enumeration in the theorem, lemma, corollary, remark environments
\newcounter{theoremi}[thm]

\newcommand{\itemthm}{\refstepcounter{theoremi} {\rm(\roman{theoremi})}{~}}

\numberwithin{thm}{section}
\numberwithin{equation}{section}

%------------------------------------------------
%  Rowan's unspaced list
%
\newcounter{smalllist}

%------------------------------------------------

%------------------------------------------------
%  unspaced ''enumerate'' environment (a la Rowan's unspaced list)
%
\newcounter{smallenum}

%

%------------------------------------------------

\begin{document}

\title[Exponential decay of dispersion managed solitons]{Exponential decay of dispersion managed solitons for general dispersion profiles}

\author{William~R.~Green and Dirk Hundertmark}
% Will's address
\address{Department of Mathematics \\
Rose-Hulman Institute of Technology \\
5500 Wabash Ave. \\
Terre Haute, IN 47803 USA  }
\email{green@rose-hulman.edu }
% Dirk's address
\address{Institute for Analysis, Department of Mathematics,  Karlsruhe Institute of Technology, 76128 Karlsruhe, Germany \\
and Department of Mathematics, Altgeld Hall
    and Institute for Condensed Matter Theory at the
    University of Illinois at Urbana--Champaign,
    1409 W.~Green Street, Urbana, IL 61801 USA }%
\email{dirk.hundertmark@kit.edu}%

\thanks{\copyright 2013 by the authors. Faithful reproduction of this article,
        in its entirety, by any means is permitted for non-commercial purposes}
\thanks{Supported in part by the Alfried Krupp von Bohlen und Halbach Foundation and 
NSF--grant DMS-0803120 (D.~H.) and NSF--grant DMS-0900865 and an AMS--Simons Travel Grant (W.R.G.).}
%\keywords{Non-linear Schr\"odinger equation, decay of eigenfunctions}
\keywords{Gabitov-Turitsyn equation, dispersion managed NLS, exponential decay}
%\subjclass[2000]{35B20, 35B40, 35P30 }
%\date{\today, version dms-pos-av-dispersion-5-3}

\begin{abstract}
We show that any weak solution of the dispersion management equation describing dispersion managed solitons 
together with its Fourier transform decay exponentially. This strong regularity result extends a recent result of 
Erdo\smash{\u{g}}an, Hundertmark, and Lee in two directions, to arbitrary non-negative average dispersion and, 
more importantly, to rather general dispersion profiles, which cover most, if not all, physically relevant cases.
\end{abstract}

\maketitle

%%%%%%%%%%%%%%%%%%%%%%%%%%%%%%%%%%
\section{Introduction}
\label{introduction}
%%%%%%%%%%%%%%%%%%%%%%%%%%%%%%%%%%

%---------------------------------
%\subsection{Properties of the solutions of the dispersion management equation}
%\label{sec:introduction1}
%---------------------------------

We prove strong regularity properties of dispersion management solitons, which are stationary solutions of the 
dispersion management, or often also called Gabitov-Turitsyn, equation
 \beq\label{eq:DMS}
   i\partial_t u = -d_{\text{av}} \partial_x^2 u - Q(u,u,u),
 \eeq
where the non-linear term is a cubic nonlinearity which is also highly non-local, given by
 \beq\label{eq:Q}
  Q(f_1,f_2,f_3) := \frac{1}{L} \int_0^L T^{-1}_{D(s)}\big[ (T_{D(s)}f_1)(\ol{T_{D(s)}f_2})(T_{D(s)}f_3)  \big]\, ds .
 \eeq
Here $T_r:= e^{ir\partial_x^2}$ is the solution operator of the free Schr\"odinger equation and $D(s):= \int_0^s d_0(\alpha)\,d\alpha$. The dispersion profile $d_0$ is an integrable periodic function with period $L$ and mean zero. Physically, it is the mean zero part of the local dispersion in dispersion managed glass--fiber cables.
Equation  \eqref{eq:DMS}  describes the (average) profile of signals in dispersion managed glass fiber cables in a frame moving with the group velocity of the pulse. Thus stationary solutions of \eqref{eq:DMS}, specifically solutions of \eqref{eq:DMS-stationary} and \eqref{eq:DMS-weak} below, 
lead to (quasi-)periodic breather type solitary pulses in these fiber cables, which are important in the development of ultra--fast long--haul optical data 
transmission fibers.
For the convenience of the reader, 
we first discuss the reason for the importance of equation \eqref{eq:DMS} and its properties for non-linear fiber optics before stating our main results.

Optical pulses in glass finer cables are very well described by the non-linear Schr\"odinger equation
 \beq\label{eq:NLS}
      i \partial_t u = -d \partial_x^2 u - |u|^2 u,
 \eeq
where the $d$ is the (local) dispersion of the glass fiber. One might argue that light pulses in glass--fiber cables are, of course, described by Maxwell's equations. However, in the slowly varying envelope approximation in non-linear optics \eqref{eq:NLS} indeed describes the evolution of the envelope of a pulse in a frame moving with the group velocity of the signal through a glass fiber cable, see \cite{Mitschke,MN03,SulemSulem}.
As a \emph{warning}: with our choice of notation the variable $t$ denotes the position along the glass fiber cable  and $x$ the (retarded) time. 

The possibility to periodically manage the dispersion by putting alternating sections with
positive and negative dispersion together in an optical glass-fiber cable to compensate for dispersion of the signal was predicted by Lin, Kogelnik, and Cohen already in 1980, see \cite{LKC80}, and then implemented by Chraplyvy and Tkach for which they received the Marconi prize in 2009, see \cite{Marconi}. 
In this case the local dispersion $d(t)$, recall that $t$ is the distance along the cable, is periodically varying along the cable and the non-linear Schr\"odinger equation \eqref{eq:NLS} changes periodically from focussing to defocussing, depending on the sign of the local dispersion $d(t)$. This periodically varying dispersion creates a new optical fiber type, e.g., through a judicious choice of parameters both the so called Gordon Haus effect and the four wave mixing can be suppressed, enabling the development of long--haul optical fiber transmission systems with record breaking capacities beyond one Terabit/second per fiber which equates to a more than 100-fold capacity increase in the last ten years. 
Thus dispersion management technology has been of fundamental importance for ultra-high speed data transfer through glass fiber cables over intercontinental distances and is now widely used commercially, \cite{AB98, CGTD93, Chraplyvy-etal95, GT96a, GT96b, KH97, Kurtzge93, LK98, LKC80, MM99, MGLWXK03, MMGNMG-NV99}.
For a review of the physics of dispersion management see \cite{TDNMSF99, Turitsynetal03}.

The periodic modulation of the dispersion can be described by the ansatz
 \beq\label{eq:dispersion}
    d(t) = \veps^{-1} d_0(t/\veps) + d_{av} .
 \eeq
Here  $d_{av} \ge 0$ is the average component and $d_0$ its mean zero part which we assume to have period $L$.
For small $\veps$ the equation \eqref{eq:dispersion} describes a fast strongly varying dispersion, which corresponds to  the regime of \emph{strong} dispersion management.

Let  $D(t)= \int_{0}^t d_0(s)\, ds$ and note that as long as  $d_0$ is locally
integrable and has period $L$ with mean zero, $D$ is also periodic with period $L$.
Furthermore,  $T_r= e^{ir\partial_x^2}$ is a unitary operator and thus the
unitary family $t\mapsto T_{D(t/\veps)}$ is periodic with period $\veps L$.
Making the ansatz $u(t,x)= (T_{D(t/\veps)}v(t,\cdot))(x)$ in \eqref{eq:NLS}, a short calculation
yields
    \beq\label{eq:NLS-2}
        i\partial_t v= d_{\text{av}}\partial_x^2 v - T_{D(t/\veps)}^{-1}\big[|T_{D(t/\veps)} v|^2 T_{D(t/\veps)} v\big]
    \eeq
which is equivalent to \eqref{eq:NLS} and still a non-autonomous equation.

For small $\veps$, that is, in the regime of strong dispersion management,
$T_{D(t/\veps)}$ is fast oscillating in the variable $t$, hence the
solution $v$ should evolve on two widely separated time-scales, a slowly evolving part
$v_{\text{slow}}$ and a fast oscillating part, which is hopefully small.
Analogously to Kapitza's treatment of the unstable pendulum which is stabilized by fast
oscillations of the pivot, see \cite{LandauLifshitz}, the effective equation for the slow part
$v_{\text{slow}}$ was derived by Gabitov and Turitsyn \cite{GT96a,GT96b} and a bit later by
Ablowitz and Biondini \cite{AB98}. It is given by integrating the fast
oscillating term containing $U_{D(t/\veps)}$ over one period in $t$, treating the slowly varying part $v_{\text{slow}}$ as constant in $t$ over the small time--scale period $\veps L$,
    \beq
    \begin{split}\label{eq:GT-time-version-1}
        i\partial_t v_{\text{slow}}
        &=  d_{\text{av}}\partial_x^2 v_{\text{slow}}
            - \frac{1}{\veps L}\int_{0}^{\veps L}
                T_{D(s/\veps)}^{-1}\big[|T_{D(s/\veps)} v_{\text{slow}}|^2 T_{D(s/\veps)} v_{\text{slow}}\big]
              \, ds \\
        &= d_{\text{av}}\partial_x^2 v_{\text{slow}}
        - Q(v_{\text{slow}},v_{\text{slow}},v_{\text{slow}}) ,
    \end{split}
    \eeq
where we made a simple change of variables $s/\veps\to s$ and used \eqref{eq:Q}. Of course, \eqref{eq:GT-time-version-1} is nothing but \eqref{eq:DMS} for the slow part $v_{\text{slow}}$
of the pulse in a dispersion managed glass--fiber cable.

This averaging procedure leading to \eqref{eq:GT-time-version-1} was rigorously justified
for certain dispersion profiles in \cite{ZGJT01}:
Given an initial condition $f$ in a suitable Sobolev space, the solutions of \eqref{eq:GT-time-version-1} and
\eqref{eq:NLS-2} stay $\veps$ close -- measured in certain Sobolev norms -- over long distances
$0\le t\le C/\veps$, see \cite{ZGJT01} for the precise formulation.
Thus of special interest are stationary solutions of \eqref{eq:DMS-stationary} or its weak formulation \eqref{eq:DMS-weak}, since they lead to breather like (quasi-)periodic solutions for the original equation
\eqref{eq:NLS}, whose average profile, for long $t\le C \veps^{-1}$, does not change (much) even for large $t$, i.e., long cables, which cover, for example, intercontinental distances.

Making the ansatz $u(t)= e^{i\omega t}f$ in \eqref{eq:DMS} yields the time independent Gabitov-Turitsyn equation
 \beq\label{eq:DMS-stationary}
   -\omega f = -d_{\text{av}} \partial_x^2 f - Q(f,f,f).
 \eeq
There is a corresponding weak formulation. Taking the scalar product with $g\in \calC^\infty_0(\R)$ or $H^1(\R)$ in \eqref{eq:DMS-stationary} and doing a judicious integration by parts using that $T_r$ is unitary gives
 \beq\label{eq:DMS-weak}
   -\omega \la g, f\ra = \d_{\text{av}}\la g',f' \ra -\calQ(g,f,f,f,)
 \eeq
for all $g\in H^1(\R)$, where the four-linear functional $\calQ$ is, at least informally, given by
 \beq\label{eq:calQ}
   \calQ(f_1,f_2,f_3,f_4):=
     \frac{1}{L} \int_0^L \int \ol{T_{D(s)}f_1(x)}T_{D(s)}f_2(x)\ol{T_{D(s)}f_3(x)}T_{D(s)}f_4(x)\, ds dx.
 \eeq
Of course, this is, at the moment only on an informal level, since it is not a-priori clear that $\calQ$ or, equivalently, $Q$ are well-defined if $f_l\in L^2(\R)$, or even if $f_l\in H^1(\R)$. However, they turn out to be well defined for a large class of dispersion profiles $d_0$, see Remark \ref{rem:boundedness} below. In fact, it turns out that $Q$ and $\calQ$ are well--defined as soon as $d_0$ changes sign only finitely many times over its period and $d_0^{-1}\in L^1([0,L])$, see \cite{HLexistence}.

Due to its importance for ultra-high speed long--haul data transfer through glass fiber cables, there is an enormous amount of work on the level of numerics and theoretical physics on the dispersion management technique, see, for example,  \cite{TBF12,TDNMSF99,Turitsynetal03} and the references therein, but rigorous results are rare, we know of \cite{EHL,HL09,HLexistence,Kunze04,KMZ05,Stanislavova05,ZGJT01}.
This is mainly caused by the highly non--local and oscillatory nature of the nonlinearity $Q$, which makes it hard to study.
Moreover, except for \cite{HLexistence,ZGJT01} these rigorous results consider only a model case for the dispersion profile $d_0$, which is the simplest choice of  
of a periodic mean--zero dispersion profile given by
 \beq\label{eq:d0-simplest}
   d_0 = \id_{[0,1)} - \id_{[1,2)} ,
 \eeq
 where $\id_A$ is the characteristic function of the set $A$. This models the simplest case of a dispersion managed cable; made up of alternating two different pieces of glass fibers whose dispersions compensate each other. In this case $D:[0,2]\to\R$ is simply the tent function $D(s)= (1-|s-1|)_+=\max(0,1-|s-1|)$. 

In addition, except for \cite{ZGJT01}, the rigorous works mentioned above consider only the case of vanishing average dispersion and  most of them address the existence problem but do not study much the fine properties of these solitons. 
For the model case \eqref{eq:d0-simplest}, Lushnikov gave convincing, but non--rigorous, heuristic arguments that the dispersion managed soliton solutions of \eqref{eq:DMS-stationary} should decay exponentially \cite{Lushnikov04}. He conjectured that for $d_\text{av}=0$ the asymptotic form of the dispersion--managed solitons should be given by
 \beq\label{eq:Lushnikov}
  f(x)\sim A\cos(a_0 x^2 + a_1 x + a_2) e^{-b|x|} \quad \text{~as~} x\to\infty
 \eeq
for some constants $A,a_j,$ $j=0,1,2$, and $b>0$. Moreover, he confirmed this conjecture numerically and also argued that the exponential decay of dispersion managed solitons should also hold for positive average dispersion $d_{\text{av}}>0$.

The motivation for our work comes from the fact that recently a rigorous result became available in which supports in part Lushnikov's conjecture. It was shown in \cite{EHL} that dispersion management solitons decay exponentially both in time and frequency domains, at least in the model case \eqref{eq:d0-simplest} and for vanishing average dispersion. In the present work we extend the results from \cite{EHL} to arbitrary non--negative average dispersion $d_{\mathrm{av}}$ and to a large class of dispersion profiles $d_0$ which, we believe, covers most, if not all, physically relevant cases:

\begin{ass}[Assumptions on $d_0$] \label{def:d0conditions}

a) Given $L>0$, we say that the dispersion profile $d_0$ is piecewise $W^{1,1}$ on $[0,L]$
   (or just piecewise $W^{1,1}$) if there is a finite partition $0=c_0<c_1<\ldots <c_N=L$ with $N<\infty$ such that on each interval $I_j:=(c_j,c_{j+1})$ the dispersion profile $d_0$ is integrable with an integrable weak derivative.\\[0.2em]
 b) Furthermore, $d_0:[0,L]\to\R$ is bounded away from zero if
 \beq\label{eq:bounded-away-zero}
   \essinf_{s\in[0,L]} |d_0(s)| > 0 .
 \eeq
\end{ass}
\begin{rem}
Conditions a) and b) above are certainly natural. The cables in a dispersion managed fiber are made up by concatenating periodically \emph{finitely many} different pieces of glass--fibers which have different dispersion indices. One could argue that within such a piece the dispersion usually is, to a good approximation, even constant. The resulting diffraction profile $d_0$ would thus even be \emph{piecewise constant} and certainly bounded away from zero. 
However, due to unavoidable imperfections in the manufacturing process of those glass--fiber cables, it is unavoidable that the dispersion within a piece will locally vary, at least by a small amount, and it would be rather unfortunate if the mathematical theory would necessarily require a piecewise constant dispersion profile.
 With our method, we can even allow for a smoothly varying local dispersion within each piece of glass--fiber cable, as long as it stays away from zero. More precisely, we only need that $d_0$ is absolutely continuous, i.e., has an integrable weak derivative, in each piece of the partition, but $d_0$ can jump and change sign from one piece of the partition to another as long as it does not approach zero. This should cover all physically relevant cases.
\end{rem}
The following theorem is the main result.
\begin{thm}[Exponential decay]\label{thm:exp-decay} Assume that the dispersion profile $d_0$ is
   piecewise $W^{1,1}$ and bounded away from zero on $[0,L]$ and let $f\in H^1$ be a solution of weak form of the \eqref{eq:DMS-weak} for some $\omega>0$ and $d_{\mathrm{av}}> 0$ {\rm (}respectively, $f\in L^2$ is a solution of \eqref{eq:DMS-weak} for $\d_\mathrm{av}=0${\rm )}. Then there there exist constants  $\mu,C>0$ such that for all $x,k\in\R$
		\bdm
			|f(x)|\le C e^{-\mu | x |}, \text{ and }
			|\hatt{f}(k)|\le C e^{-\mu | k |},
		\edm
		where $\hat{f}$ is the Fourier transform of $f$.
 \end{thm}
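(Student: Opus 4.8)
The plan is to pass to the Fourier side, where the nonlinearity $Q$ becomes a tame multilinear convolution, and to show that $f$ \emph{and} $\widehat f$ simultaneously lie in an exponentially weighted $L^2$ space --- equivalently, that both extend analytically to a common strip $\{|\im z|<\mu\}$ with controlled $L^2$ norms on the horizontal lines. By the Paley--Wiener theorem this is exactly the asserted pointwise decay, after a final soft upgrade from weighted $L^2$ to weighted $L^\infty$ using the equation and $H^1(\R)\hookrightarrow L^\infty(\R)$ (shrinking $\mu$ a little if necessary). The reason the decays of $f$ and of $\widehat f$ cannot be separated is built into $Q$: the free Schr\"odinger propagators $T_{D(s)}$ sitting inside $Q$ do not preserve exponential decay in $x$, but the $x$--decay of $T_{D(s)}f$ is controlled by the width of the analyticity strip of $f$, i.e.\ by the exponential decay of $\widehat f$, and symmetrically; so the two must be propagated together.

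First I would rewrite the equation in Fourier variables. Since $\widehat{T_r g}(k)=e^{-irk^2}\widehat g(k)$, putting $k=k_1-k_2+k_3$ and carrying out the $s$--integration turns \eqref{eq:DMS-stationary} (hence \eqref{eq:DMS-weak}) into the fixed point relation
\beq
 \widehat f(k)=\frac{1}{d_{\mathrm{av}}k^2+\omega}\,\frac{1}{2\pi}\int\!\!\int\Psi\big((k-k_1)(k-k_3)\big)\,\widehat f(k_1)\,\ol{\widehat f(k_1+k_3-k)}\,\widehat f(k_3)\,dk_1\,dk_3,
 \label{eq:sketch-fourierFP}
\eeq
with $\Psi(\xi):=\tfrac1L\int_0^L e^{2iD(s)\xi}\,ds$ (and $\tfrac1{d_{\mathrm{av}}k^2+\omega}$ read as $\omega^{-1}$ when $d_{\mathrm{av}}=0$). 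The basic lemma is that $\Psi$ is entire and satisfies $|\Psi^{(m)}(\xi)|\le C_m\,e^{2\norm{D}_\infty|\im\xi|}\min(1,|\re\xi|^{-1})$ for every $m\ge0$. This is exactly where Assumption~\ref{def:d0conditions} is used: on each interval $I_j$ of the partition, $d_0=D'$ is continuous (being $W^{1,1}$ there) and bounded away from zero, hence of constant sign, so $D$ is strictly monotone on $I_j$; writing $e^{2iD(s)\xi}=(2i\xi\,d_0(s))^{-1}\partial_s e^{2iD(s)\xi}$ and integrating by parts produces boundary terms of size $|\re\xi|^{-1}$ and an interior term bounded by $|\re\xi|^{-1}(\essinf_{I_j}|d_0|)^{-2}\norm{d_0'}_{L^1(I_j)}$, all finite; summing over the finitely many $I_j$ gives the estimate (and its derivative and analytic--continuation versions). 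The model profile \eqref{eq:d0-simplest} is the special case of \cite{EHL}, and it is precisely the general piecewise--$W^{1,1}$, bounded--away--from--zero class that lets this bound (and the multilinear estimates below) go through; for $d_{\mathrm{av}}=0$ it also underlies the fact that $Q$ is even well defined on $L^2$, cf.\ \cite{HLexistence}.

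Next I would run an a priori regularity bootstrap: the decay $|\Psi(\xi)|\lesssim|\xi|^{-1}$ gives, in the region where the output frequency dominates the inputs, an extra factor $\langle k\rangle^{-1}$ beyond the resolvent's $\langle k\rangle^{-2}$ (just $\langle k\rangle^{-1}$ if $d_{\mathrm{av}}=0$), and a routine multilinear iteration upgrades $f\in H^1$ (or $L^2$) to $f\in H^s$ for all $s$, while a companion argument using the physical--space form of the equation likewise yields rapid decay of $f$ --- so both $f$ and $\widehat f$ have rapidly decaying tails. Then, for the exponential decay, I would work in the scale of spaces $\mathcal X_\mu$ of functions with $e^{\mu|x|}f\in L^2$ and $e^{\mu|k|}\widehat f\in L^2$ (together with the attendant control of the analytic extensions) and establish a coupled trilinear estimate $\norm{Q(f_1,f_2,f_3)}_{\mathcal X_\mu}\lesssim\prod_j\norm{f_j}_{\mathcal X_\mu}$. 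Two ingredients go in: (i) the analytic--continuation bound of the previous paragraph shows that when the output frequency is moved into the strip the amplitude $\Psi$ grows at most like $e^{2\norm{D}_\infty\mu|\cdot|}$, which the weight budget of $\mathcal X_\mu$ absorbs as long as $\mu$ stays below a threshold ($\mu<\sqrt{\omega/d_{\mathrm{av}}}$, forced by the resolvent pole, when $d_{\mathrm{av}}>0$; of order $\norm{D}_\infty^{-1}$ when $d_{\mathrm{av}}=0$); (ii) since $|\Psi|$ is non--negligible only on the near--diagonal set $\{|(k-k_1)(k-k_3)|\lesssim1\}$, on that set one input frequency is within $O(1)$ of the output, so the full strip weight can be loaded onto that single factor at bounded cost while the other two carry only unweighted norms. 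Inserting this crudely into \eqref{eq:sketch-fourierFP} yields only the circular bound $\norm{f}_{\mathcal X_\mu}\lesssim\norm{f}_{\mathcal X_\mu}^3$, so instead I would bootstrap the weight incrementally: given $f\in\mathcal X_{\mu_0}$ (trivially true for $\mu_0=0$), split output and input frequencies into a bounded part --- on which $e^{(\mu_0+\delta)|\cdot|}$ is itself bounded, so the contribution is finite --- and a high part, on which the near--diagonal analysis together with the smallness of the tails bounds the contribution by $\tfrac12\norm{f}_{\mathcal X_{\mu_0+\delta}}$ plus lower--order terms; this gives $f\in\mathcal X_{\mu_0+\delta}$ for a step $\delta>0$ independent of $\mu_0$, and iteration reaches every $\mu$ below the threshold. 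A last pass through \eqref{eq:sketch-fourierFP} and its physical--space analogue, with $H^1\hookrightarrow L^\infty$, converts $e^{\mu|x|}f,\,e^{\mu|k|}\widehat f\in L^2$ into the pointwise bounds.

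The main obstacle --- and the reason the generalization from \eqref{eq:d0-simplest} to arbitrary $d_0$ is substantial rather than cosmetic --- will be exactly the coupled multilinear estimate above: $Q$ sandwiches free evolutions around a pointwise product, the evolutions destroy exponential $x$--decay factor by factor, and so the estimate must be proved by exploiting the precise trade--off between the analyticity strip and the exponential rate, uniformly in $s\in[0,L]$, with every constant depending only on $\essinf|d_0|$, $\norm{d_0'}_{L^1}$, $\norm{D}_\infty$, $\omega$ and $d_{\mathrm{av}}$ --- precisely the quantities Assumption~\ref{def:d0conditions} keeps under control through the stationary--phase bound on each monotone piece of $D$.
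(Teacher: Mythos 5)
Your global architecture is close in spirit to the paper's (weighted $L^2$ bounds for $f$ and $\hatt f$ separately, exploiting the factorization $a(\eta)=2(\eta_1-\eta_2)(\eta_2-\eta_3)$ on the convolution constraint, integration by parts on each monotone piece of $D$, then a soft upgrade to pointwise decay), but the two steps that carry the whole weight of the argument have genuine gaps. First, your ingredient (i) — moving the output frequency into the strip and absorbing the growth of $\Psi$ — does not work as stated. If $k$ is shifted by $i\sigma$, $|\sigma|\le\mu$, the argument of $\Psi$ acquires imaginary part $\sigma\big((k-k_1)+(k-k_3)\big)$, so the growth factor is $e^{2\|D\|_\infty\mu(|k-k_1|+|k-k_3|)}$, which is unbounded in the frequency differences for \emph{every} $\mu>0$; the weight budget has zero slack, since the triangle inequality $|k|\le|k_1|+|k_2|+|k_3|$ on the constraint set can be saturated, and the near--diagonal localization of $\Psi$ you invoke in (ii) is a real--variable fact (the $\min(1,|\re\xi|^{-1})$ decay is only polynomial and is destroyed by the exponential growth off the real axis). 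The correct — and much simpler — mechanism is to not complexify the oscillatory kernel at all: keep $\Psi$ (equivalently the kernels in $K^1,K^2$) at real arguments, where Lemma \ref{lem:intbyparts1} and Corollary \ref{cor:small-D} apply, and transfer the exponential weight purely through the constraint $\eta_1-\eta_2+\eta_3-\eta_4=0$, i.e.\ $e^{F(\eta_1)}\le e^{F(\eta_2)+F(\eta_3)+F(\eta_4)}$; this is exactly Theorem \ref{thm:exp-twisted-bound}, which gives the weighted multilinear bound \emph{uniformly} in the weight with no threshold and no growth factor. Your thresholds $\mu<\sqrt{\omega/d_{\mathrm{av}}}$ and $\mu\sim\|D\|_\infty^{-1}$ are artifacts of the contour--shifting route.

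Second, the circularity is not actually broken by your incremental bootstrap. You correctly note that the naive estimate gives $\|f\|_{\mathcal X_\mu}\lesssim\|f\|_{\mathcal X_\mu}^3$, but the proposed fix — that on the high--frequency part the contribution is at most $\tfrac12\|f\|_{\mathcal X_{\mu_0+\delta}}$ plus lower order, with $\delta$ independent of $\mu_0$ — is unsubstantiated: when all input factors sit at high frequency the term is cubic (quartic on the $x$--side) in the unknown weighted norm, and a cubic term cannot be absorbed linearly unless that norm is already known to be small, which is precisely what has to be proved; there is no smallness assumption on the soliton, and $\|f\|$ itself is of order one. Moreover the quantity $\|f\|_{\mathcal X_{\mu_0+\delta}}$ is not known to be finite a priori, so it cannot legitimately appear on both sides of an inequality without a regularization. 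The paper's proof supplies exactly the missing machinery: the bounded regularized weights $F_{\mu,\veps}$ of \eqref{eq:F}, the cut--off $\chi_\tau$ with the coupling $\mu=\tau^{-1}$ so that all ``inner'' contributions are controlled by $\|f\|$, smallness coming not from absorption but from $\|f_\sim\|\to0$ and the separated--support gain $\tau^{-1/2}$ of \eqref{eq:mult-lin-loc} (Lemma \ref{lem:localization-error2}), and finally the continuity--in--$\veps$ trapping argument around \eqref{eq:punchline}, which converts smallness of $\|f\|_{\tau,\veps}$ as $\veps\to\infty$ into finiteness as $\veps\to0$ despite the cubic/quartic terms. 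Without substitutes for these steps (weight transfer via the constraint rather than analytic continuation of $\Psi$, and the regularization--plus--trapping argument rather than a $\tfrac12$--absorption), your outline does not yet yield the theorem.
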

An useful consequence of Theorem \ref{thm:exp-decay} is
 \begin{cor}
   Assume that the dispersion profile $d_0$ is
   piecewise $W^{1,1}$ and bounded away from zero on $[0,L]$ . Then any weak solution $f\in H^1(\R)$ of the dispersion management equation \eqref{eq:DMS-weak} for some $\omega>0$ and $d_{\mathrm{av}}> 0$ {\rm (}respectively, weak solution $f\in L^2$ of \eqref{eq:DMS-weak} for $\d_\mathrm{av}=0${\rm)}
   is analytic in a strip containing the real axis. The same holds for its Fourier transform $\hatt{f}$.
 \end{cor}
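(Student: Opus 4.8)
The plan is to obtain the corollary as an immediate consequence of the exponential decay bounds in Theorem~\ref{thm:exp-decay} via a Paley--Wiener type argument. Let $f$ be a solution as in the theorem, so that there are $\mu, C>0$ with $|f(x)|\le Ce^{-\mu|x|}$ and $|\hatt f(k)|\le Ce^{-\mu|k|}$ for all $x,k\in\R$. In particular $\hatt f\in L^1(\R)$, so Fourier inversion applies and $f$ coincides almost everywhere with the continuous function $x\mapsto \tfrac{1}{\sqrt{2\pi}}\int_\R\hatt f(k)e^{ikx}\,dk$; from now on $f$ denotes this representative.

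Next I would introduce, for $z$ in the open strip $S_\mu:=\{z\in\C: |\im z|<\mu\}$, the function
\[
 F(z):=\frac{1}{\sqrt{2\pi}}\int_\R \hatt f(k)\,e^{ikz}\,dk .
\]
For $z=x+iy$ with $|y|<\mu$ one has $|\hatt f(k)\,e^{ikz}|\le Ce^{-\mu|k|}e^{|y||k|}=Ce^{-(\mu-|y|)|k|}$, which is integrable; moreover on each substrip $\{|\im z|\le\mu'\}$ with $\mu'<\mu$ this bound is uniform in $z$, so the integral converges absolutely and locally uniformly on $S_\mu$ and $F$ is continuous there. Analyticity of $F$ on $S_\mu$ then follows by standard means: either write $F$ as the locally uniform limit of the entire functions $z\mapsto\tfrac{1}{\sqrt{2\pi}}\int_{-R}^{R}\hatt f(k)e^{ikz}\,dk$ as $R\to\infty$, or apply Morera's theorem together with Fubini and $\oint_{\partial\Delta}e^{ikz}\,dz=0$ for triangles $\Delta\subset S_\mu$. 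Since $F|_\R=f$, the solution $f$ extends to a function analytic in the strip $S_\mu$, which contains the real axis.

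The statement for $\hatt f$ follows in exactly the same way, now using the bound $|f(x)|\le Ce^{-\mu|x|}$: the function $\zeta\mapsto\tfrac{1}{\sqrt{2\pi}}\int_\R f(x)e^{-ix\zeta}\,dx$ is analytic on $S_\mu$ and restricts to $\hatt f$ on $\R$. I do not expect a genuine obstacle here; the only points deserving a word of care are the (already implicit) passage to the continuous --- in fact real-analytic --- representative of the a.e.\ defined weak solution, and the routine justification of analyticity of the integral. Note that the argument actually yields analyticity on the explicit strip $\{|\im z|<\mu\}$, with $\mu$ the decay rate provided by Theorem~\ref{thm:exp-decay}.
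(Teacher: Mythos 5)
Your proposal is correct and follows essentially the same route as the paper: the paper also deduces the corollary directly from the exponential bounds of Theorem~\ref{thm:exp-decay} by observing that the Fourier inversion integrals for $f$ and $\hatt f$ converge absolutely on the strip $\{|\im z|<\mu\}$ and hence provide the analytic continuations. Your write-up merely spells out the locally uniform convergence and Morera-type justification that the paper leaves implicit.
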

\bpf
Since
 \bdm
  f(x)= \frac{1}{\sqrt{2\pi}} \int_{-\infty}^\infty e^{-ikx} \hatt{f}(k)\, dk \quad
\textrm{and} \quad
  \hatt{f}(k) = \frac{1}{\sqrt{2\pi}} \int_{-\infty}^\infty e^{ikx} f(x)\, dx,
 \edm
 the exponential bounds from Theorem \ref{thm:exp-decay} immediately show that $f$ and $\hatt{f}$ can be analytically continued onto the strip $\{z=x+iy :\, x\in\R, |y|<\mu\}$.
\epf

This result and, indeed, some of the technical machinery
used in its proof, in particular, the use of exponential weighted oscillatory 
integrals and multi-linear bounds, mirrors the result in \cite{EHL} for the model case, where the dispersion 
profile is given by  \eqref{eq:d0-simplest}. 
Note, however, that the representation \eqref{eq:rep-real-space} from Proposition \ref{prop:rep} shows that we 
have to bound oscillatory  integrals with the kernel $D(s)^{-1}$, which is singular whenever 
$D(s)=\int_0^s d_0(\alpha)\, d\alpha=0$ and 
since $d_0$ changes sign $D$ will have possibly plenty of zeros whose location is not known directly. 
The model case \eqref{eq:d0-simplest} is much easier: $D$ is known explicitly and 
one can straightforwardly implement the change  of variables  $r=D(s)$, which drastically simplifies the structure of the 
four-linear functional $\calQ$ and thus also the proof of the necessary multi-linear bounds for $\calQ$. 
Much of the essential additional difficulty caused by the extension to more general dispersion profiles away from the model case 
\eqref{eq:d0-simplest} to virtually all physically relevant dispersion profiles $d_0$  described by 
Assumption \eqref{def:d0conditions} and the extension to $d_{av}>0$  is unfortunately 
technical and explained in Remark~\ref{rem:difficulty}  below.

The paper is organized as follows:
In Section~\ref{sec:Q} we discuss and establish 
several bounds for the multi-linear operator
$\mathcal Q$.  In Section~\ref{sec:multilinear} we
introduce the majorant functionals $K^1$ and $K^2$ and
deduce needed estimates for $\mathcal Q$ from multi-linear
bounds on $K^j$, $j=1,2$.  In Section~\ref{sec:expdecay}
we use the results we establish in Sections~\ref{sec:Q}
and \ref{sec:multilinear} to prove 
Theorem~\ref{thm:exp-decay}.  Finally, we include a
technical appendix in which we prove the multi-linear
bounds on the operators $K^j$.

%%%%%%%%%%%%%%%%%%%%%%%%%%%%%%%%%%%%%%%%%%%%%%%%
\section{A-priori estimates on $\calQ$}
\label{sec:Q}
%%%%%%%%%%%%%%%%%%%%%%%%%%%%%%%%%%%%%%%%%%%%%%%%
First we introduce some necessary notation. For $1\le p\le \infty$ we denote by $L^p=L^p(\R)$ the usual scale of $L^p$--space with norm given by $\|\cdot\|_p$. We will only need $L^p$ on $\R$.
If $p=2$, we will simply write $\|\cdot\|$ for $\|\cdot\|_2$. Furthermore, $H^s=H^s(\R)$ is the usual scale of $L^2$ based Sobolev spaces with norm $\|\cdot\|_{H^s}$. The space $L^2$ is a Hilbert space with scalar product given by $\la g, f \ra= \int_\R \ol{g(x)}f(x)\, dx$, i.e., our scalar product in anti-linear in the first component and linear in the second. With $\delta$ we denote the one--dimensional Dirac measure, or Dirac distribution, i.e., for a (measurable, i.e., Borel) set $A$ one has $\delta(A)=\int_A\delta(s)\, ds= \id_A(0)$. Here $\id_A$ is the characteristic function, or indicator function, of the set $A$, i.e., $\id_A(x)=1$ if $x\in A$ and $\id_A(x)=0$ if $x\not\in A$. With $|A|$ we denote the Lebesgue measure of a Borel subset $A\subset\R^d$. 
Lastly, we find it convenient to write $f \lesssim g$ for two functions $f$ and $g$ if there is a universal constant $C>0$ such that
$f\le C g$. 

%---------------------------

%---------------------------
We start with a representation of $\calQ$ which is a bit more complicated but similar to the one in \cite{EHL}.

\begin{prop}\label{prop:rep}
 Assume that the dispersion profile $d_0$ is piecewise $W^{1,1}$ on $[0,L]$ and bounded away from zero. Then for $f_1,f_2,f_3, f_4$ in the Schwarz class of rapidly decaying smooth functions one has
  \begin{align}
   \calQ (f_1,&f_2,f_3,f_4) = \nonumber \\
   &
    \frac{1}{4\pi L}\int_{\R^4} \int_0^L
      e^{-i\frac{a(\eta)}{4D(s)}} \frac{ds}{|D(s)|} \,
     \ol{f_1(\eta_1)} f_2(\eta_2) \ol{f_3(\eta_3)} f_4(\eta_4)\,\delta(\eta_1-\eta_2+\eta_3-\eta_4)d\eta
     \label{eq:rep-real-space}\\
  \intertext{and }
   \calQ(f_1,&f_2,f_3,f_4) =\nonumber \\
   &
     \frac{1}{2\pi L} \int_{\R^4} \int_0^L
     e^{iD(s)a(\eta)}\, ds \,
     \ol{\hatt{f_1}(\eta_1)} \hatt{f_2}(\eta_2) \ol{\hatt{f_3}(\eta_3)} \hatt{f_4}(\eta_4)\,\delta(\eta_1-\eta_2+\eta_3-\eta_4) d\eta
     \label{eq:rep-fourier-space}
 \end{align}
 where $a(\eta):= \eta_1^2-\eta_2^2+\eta_3^2-\eta_4^2$ and $D(s):= \int_0^s d_0(\alpha)\,d\alpha$.
\end{prop}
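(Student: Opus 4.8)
The plan is to work from the definition \eqref{eq:calQ} of $\calQ$. For $f_1,\dots,f_4$ in the Schwartz class each $T_{D(s)}f_j$ is again Schwartz (the Fourier multiplier $e^{-iD(s)\eta^2}$ is $C^\infty$ with polynomially bounded derivatives), so the $x$--integrand in \eqref{eq:calQ} is Schwartz; moreover, since $T_r=e^{ir\pa_x^2}$ is unitary on $L^2$ and $\norm{T_r g}_\infty\le (2\pi)^{-1/2}\norm{\hatt g}_1$ uniformly in $r$, one has $\int_\R\abs{\ol{T_{D(s)}f_1}\,T_{D(s)}f_2\,\ol{T_{D(s)}f_3}\,T_{D(s)}f_4}\,dx\le (2\pi)^{-1}\norm{\hatt f_1}_1\norm{\hatt f_2}_1\norm{f_3}_2\norm{f_4}_2$ uniformly in $s\in[0,L]$, so the left-hand sides are finite and the $dx$-- and $ds$--integrations may be interchanged. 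The two representations are then obtained by substituting the two standard forms of $T_r$ -- the Fourier multiplier form for \eqref{eq:rep-fourier-space}, the Fresnel kernel form for \eqref{eq:rep-real-space} -- multiplying out the four factors, and carrying out the $x$--integration, which collapses a pure phase in $x$ into the momentum--conservation factor $\delta(\eta_1-\eta_2+\eta_3-\eta_4)$.

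For \eqref{eq:rep-fourier-space} I would write $T_{D(s)}f_j(x)=(2\pi)^{-1/2}\int_\R e^{-i\eta_j x}e^{-iD(s)\eta_j^2}\hatt f_j(\eta_j)\,d\eta_j$. Inserting this and its conjugate for the four factors, the combined phase is $(\eta_1-\eta_2+\eta_3-\eta_4)x+D(s)\,a(\eta)$, and the $x$--integral produces $\int_\R e^{i(\eta_1-\eta_2+\eta_3-\eta_4)x}\,dx=2\pi\,\delta(\eta_1-\eta_2+\eta_3-\eta_4)$ together with the residual phase $e^{iD(s)a(\eta)}$. To legitimise the interchange of $\int dx$ with the four $\int d\eta_j$ I would insert a Gaussian cutoff $e^{-\veps x^2}$, apply Fubini (now every factor is absolutely integrable), evaluate the Gaussian $x$--integral, and let $\veps\downarrow0$, using dominated convergence and the rapid decay of the $\hatt f_j$. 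A final $\tfrac1L\int_0^L ds$, together with one more, harmless, application of Fubini ($\abs{e^{iD(s)a(\eta)}}=1$, and $\abs{\hatt f_1\hatt f_2\hatt f_3\hatt f_4}$ is integrable over the hyperplane $\eta_1-\eta_2+\eta_3-\eta_4=0$), gives \eqref{eq:rep-fourier-space}.

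For \eqref{eq:rep-real-space} I would use the Fresnel representation $T_r g(x)=(4\pi i r)^{-1/2}\int_\R e^{i(x-y)^2/(4r)}g(y)\,dy$, valid for $r\neq0$ with the branch of the square root fixed so that, in particular, $\abs{(4\pi ir)^{-1/2}}^4=(4\pi\abs{r})^{-2}$. First observe that under Assumption~\ref{def:d0conditions}, on each interval of the partition $d_0$ is continuous with $\abs{d_0}\ge\essinf_{[0,L]}\abs{d_0}>0$, hence of constant sign, so $D(s)=\int_0^s d_0(\alpha)\,d\alpha$ is strictly monotone on each piece and vanishes at only finitely many $s$; in particular $D(s)\neq0$ for a.e.\ $s$. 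For such $s$, substituting the kernel form for the four factors, the four $x^2$--terms in the exponent cancel, leaving $\tfrac{i}{4D(s)}\bigl[2x(\eta_1-\eta_2+\eta_3-\eta_4)-a(\eta)\bigr]$; the $x$--integral (again via the $e^{-\veps x^2}$ regularisation) then produces $4\pi\abs{D(s)}\,\delta(\eta_1-\eta_2+\eta_3-\eta_4)$ times $e^{-ia(\eta)/(4D(s))}$, and against the product of the four prefactors, which is $(4\pi\abs{D(s)})^{-2}$, this leaves the weight $(4\pi\abs{D(s)})^{-1}$. Inserting into $\tfrac1L\int_0^L ds$ gives \eqref{eq:rep-real-space}.

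The step that needs real care -- and the one I expect to be the main obstacle -- is the last one for \eqref{eq:rep-real-space}. Unlike the bounded Fourier--side phase, the weight $\abs{D(s)}^{-1}$ is \emph{not} integrable near the zeros of $D$ (near $s=0$ one has $D(s)\sim d_0(0^+)\,s$), so \eqref{eq:rep-real-space} cannot be read as an absolutely convergent integral: it is an iterated integral in which the inner $s$--integral, performed for fixed $\eta$, is only conditionally convergent, the cancellation coming entirely from the fast oscillation of $e^{-ia(\eta)/(4D(s))}$ as $D(s)\to0$. The clean rigorous route is to run the whole computation with the cutoff $e^{-\veps x^2}$ in place -- where everything is absolutely convergent and Fubini applies freely -- and only then send $\veps\downarrow0$; what makes the limit harmless is that, once the $\eta$--integration is done first, the resulting $s$--integrand is \emph{bounded} near the zeros of $D$: restricted to the hyperplane $\eta_1-\eta_2+\eta_3-\eta_4=0$ the phase $a(\eta)$ equals, after a unit--Jacobian change of variables, $2(\eta_1-\eta_2)(\eta_2-\eta_3)$, a non-degenerate hyperbolic quadratic in two of the three free variables, so $\bigl|\int_{\R^4}e^{-ia(\eta)/(4D(s))}\ol{f_1}f_2\ol{f_3}f_4\,\delta\,d\eta\bigr|\lesssim\abs{D(s)}$, which exactly cancels the $\abs{D(s)}^{-1}$ weight. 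Because $\int_0^L\abs{D(s)}^{-1}\,ds=\infty$, the $s$-- and $\eta$--integrations cannot be freely interchanged, so the two representations are of genuinely different analytic character; it is precisely this singular oscillatory weight $D(s)^{-1}$ that forces the multilinear machinery of Sections~\ref{sec:Q}--\ref{sec:multilinear}, whereas in the model case \eqref{eq:d0-simplest} the explicit substitution $r=D(s)$ removes it outright.
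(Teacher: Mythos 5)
Your proposal is correct and follows essentially the same route as the paper: insert the Fresnel kernel form of $T_r$ (for \eqref{eq:rep-real-space}) and the Fourier multiplier form (for \eqref{eq:rep-fourier-space}) into \eqref{eq:calQ}, and let the $x$--integration collapse the pure phase into $2\pi\,\delta(\eta_1-\eta_2+\eta_3-\eta_4)$, with the constants working out exactly as you compute. Your Gaussian regularisation and your remarks on the conditionally convergent $s$--integral (cancelled by the $O(|D(s)|)$ decay of the hyperbolic-phase $\eta$--integral) simply make explicit the interchanges the paper performs informally via the substitution $x=2D(s)z$ and the distributional identity for $\delta$, so no changes are needed.
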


\begin{proof}
Assume that $f_j$ is in the Schwarz class, then all expressions below are a-priori well defined. Using the representation
 \beq
   T_r f_j(x) = e^{ir\partial_x^2} f(x)
    = \frac{1}{\sqrt{4\pi i r}} \int_\R e^{i\frac{(x-y)^2}{4r}} f_j(y)\, dy
 \eeq
for the free Schr\"odinger evolution in \eqref{eq:calQ} and collecting terms one sees
 \beq
  \begin{split}
   &\calQ(f_1,f_2,f_3,f_4) = \\
   &   \frac{1}{(4\pi)^2}  \frac{1}{L}\int_0^L \int_\R \int_{\R^4}
         \e^{-i\frac{a(\eta)}{4D(s)}} \frac{1}{|D(s)|^2} \ol{f_1(\eta_1)} f_2(\eta_2) \ol{f_3(\eta_3)} f_4(\eta_4)
         \e^{i\frac{x(\eta_1-\eta_2+\eta_3-\eta_4)}{2D(s)}} \, d\eta \, dx ds
  \end{split}
 \eeq
Exchanging the $\eta$ and $x$ integration, making the change of variables $x=2D(s) z$, and noting that, as distributions, the Dirac measure is given by $\delta(\alpha)= \tfrac{1}{2\pi}\int_\R e^{iz\alpha}\, dz$ we arrive at
 \beq
  \begin{split}
   &\calQ(f_1,f_2,f_3,f_4) = \\
   &   \frac{1}{4\pi L}  \int_0^L \int_{\R^4}
         \e^{-i\frac{a(\eta)}{4D(s)}} \frac{1}{|D(s)|}\, \ol{f_1(\eta_1)} f_2(\eta_2) \ol{f_3(\eta_3)} f_4(\eta_4)
         \delta(\eta_1-\eta_2+\eta_3-\eta_4) \, d\eta \, ds
  \end{split}
 \eeq
which, after exchanging the $\eta$ and $s$ integration, is \eqref{eq:rep-real-space}.

The representation \eqref{eq:rep-fourier-space} follows from a similar line of reasoning, using the Fourier--repre\-sentation of the free Schr\"odinger evolution, 
 \beq\label{eq:free-schroedinger-fourier},
   T_r f(x) = \frac{1}{\sqrt{2\pi}} \int_\R e^{-ix k } \e^{-ir k^2} \hatt{f}(k)\, dk.
 \eeq
\end{proof}

\begin{rem}
 The oscillatory integral
   \beq\label{eq:osc-integ-1}
    \frac{1}{L}\int_0^L e^{iD(s)a}\, ds
   \eeq
 in \eqref{eq:rep-fourier-space} is certainly well-defined as long as $d_0$ is integrable on $[0,L]$.
 One needs to be more careful for the oscillatory integral
   \beq\label{eq:osc-integ-2}
   \frac{1}{L}\int_0^L e^{-i\frac{a}{4D(s)}} \frac{ds}{|D(s)|}  .
   \eeq
 For example, note that even in the model case \eqref{eq:d0-simplest}, where $d_0$ is the difference of characteristic functions, the integral $L^{-1}\int_0^L |D(s)|^{-1}\, ds = \int_0^1 r^{-1} \, dr $ diverges.
 Thus \eqref{eq:osc-integ-2} exists only as an oscillatory integral; to get useful bounds for it one has to invoke cancelations when $D$ is small,  see, in particular, Lemma \ref{lem:intbyparts2}, Remark \ref{rem:oscillatory-integral}, and Corollary \ref{cor:small-D} below.
 We note that the resulting bounds are a bit more involved than those for \eqref{eq:osc-integ-1}.
\end{rem}
The following is a key ingredient in our bounds for the oscillatory integrals. It allows us to control the oscillatory integrals without explicit 
knowledge of the location of the zeros of $D$. 
\begin{lem}[Sublevel set estimate]\label{lem:sublevel}
Let $L>0$, $d_0:[0,L)\to \R$ be bounded away from zero change and sign finitely many times.
Then, for $D(s):= \int_0^s d_0(\alpha)\, d\alpha$,
 \beq\label{eq:sublevel1}
  |\{ 0<s<L: \abs{D(s)}< \alpha \}| \lesssim \min(\alpha,1)
 \eeq
 for all $\alpha>0$.
\end{lem}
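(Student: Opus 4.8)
The plan is to exploit the two features of $d_0$ in the hypothesis separately: being bounded away from zero gives a lower bound on the \emph{speed} at which $D$ moves, while changing sign finitely many times limits the \emph{number} of times $D$ can return near the value $0$. Write $m := \essinf_{s\in[0,L]}|d_0(s)| > 0$. First I would observe that on each interval $I_j = (c_j, c_{j+1})$ of the partition on which $d_0$ does not change sign, $D$ is strictly monotone with $|D'| = |d_0| \ge m$ a.e., hence $D$ restricted to $\ol{I_j}$ is a bi-Lipschitz homeomorphism onto its image with $|D(s) - D(t)| \ge m|s-t|$ for $s,t \in I_j$. In particular, for any fixed value $v \in \R$, the set $\{ s \in I_j : |D(s) - v| < \alpha \}$ is contained in an interval of length at most $2\alpha/m$; applying this with $v = 0$ gives $|\{ s \in I_j : |D(s)| < \alpha \}| \le 2\alpha/m$.

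Next I would sum over the (finitely many, say $P \le N$) intervals of constancy of sign:
\bdm
  |\{ 0 < s < L : |D(s)| < \alpha \}| \;\le\; \sum_{j} |\{ s \in I_j : |D(s)| < \alpha \}| \;\le\; \frac{2P}{m}\,\alpha .
\edm
Here I should be slightly careful that the partition in Assumption~\ref{def:d0conditions}(a) (the $W^{1,1}$ pieces) need not coincide with the partition into sign-constancy intervals of $d_0$; but since $d_0$ changes sign only finitely many times by hypothesis, a common refinement is still a finite partition, and on each piece of the refinement both properties hold, so the argument goes through with $P$ replaced by the size of the refinement. This establishes $|\{ 0 < s < L : |D(s)| < \alpha \}| \lesssim \alpha$, the bound that is relevant for small $\alpha$.

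Finally, for the $\min(\alpha, 1)$ form of the estimate, I would note the trivial bound $|\{ 0 < s < L : |D(s)| < \alpha \}| \le L$ valid for every $\alpha$, and combine: for $\alpha \le 1$ use the linear bound $C\alpha$, and for $\alpha \ge 1$ use $L = L\cdot 1 \le L \min(\alpha,1)\cdot$(something)$ $—more cleanly, the right-hand side $\min(\alpha,1)$ equals $\alpha$ for $\alpha\le 1$ and $1$ for $\alpha \ge 1$, so taking $C := \max(2P/m,\, L)$ gives $|\{0<s<L:|D(s)|<\alpha\}| \le C\min(\alpha,1)$ in both regimes. I do not expect a genuine obstacle here; the one point demanding care is the interplay of the two partitions just mentioned, i.e., making sure "changes sign finitely many times" is used to produce a finite collection of intervals on each of which $D$ is strictly monotone, so that the bi-Lipschitz/sublevel estimate applies on each piece and the finitely many contributions add up.
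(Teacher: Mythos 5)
Your proof is correct and follows essentially the same route as the paper: both arguments combine the trivial bound $|\{|D|<\alpha\}|\le L$ with a linear-in-$\alpha$ bound obtained from the two hypotheses, finitely many sign changes (giving finitely many relevant pieces) and $|d_0|\ge m>0$ (giving a speed bound on $D$). The only cosmetic difference is that you localize on the finitely many sign-constancy intervals of $d_0$, where $D$ is strictly monotone, while the paper localizes around the finitely many zeros of $D$; your version is, if anything, slightly cleaner since it avoids the ``for small enough $\alpha$'' caveat (and note the bi-Lipschitz claim and the $W^{1,1}$ refinement are not needed—only the lower bound $|D(s)-D(t)|\ge m|s-t|$ on each piece is used).
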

\begin{rem}
 This sublevel bound should be compared with the usual sublevel estimate which is used in the proof on van der Corput's lemma in harmonic analysis. 
 There one usually assumes that some derivative of $D$ has a fixed sign, but does not need the assumption that $D$ change sign finitely many times, 
 see Proposition 2.6.7 in \cite{Grafakos-classical-Fourier-analysis}.
\end{rem}
\bpf
Certainly $|\{ 0<s<L: \abs{D(s)}< \alpha \}|\le L\lesssim 1$.
The assumptions imply that $D$ has finitely many zeros in $[0,L]$ and at each zero of
$D$, the slope of $D$ is bounded below by $\delta := \essinf_{0\le s\le L}\abs{d_0(s)}>0$.
Thus, for small enough $\alpha$, $\{ 0<s<L: \abs{D(s)}< \alpha \}$ consists of a
finite number of intervals where each interval is of length at most $\alpha/\delta$. This shows
$|\{ 0<s<L: \abs{D(s)}< \alpha \}|\lesssim \alpha$ for small $\alpha$ and hence
\eqref{eq:sublevel1} holds.
\epf

\begin{cor}\label{cor:sublevel}
Under the assumptions of Lemma \ref{lem:sublevel}, we have
 \beq\label{eq:sublevel2}
  \int_0^L \frac{1}{\sqrt{D(s)}}\, ds <\infty .
 \eeq
\end{cor}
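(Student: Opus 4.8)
The plan is to deduce \eqref{eq:sublevel2} from the sublevel set estimate \eqref{eq:sublevel1} by a standard layer-cake (distribution function) argument. First I would write the integral of $1/\sqrt{D(s)}$ over $[0,L]$ in terms of the distribution function of the nonnegative measurable function $s\mapsto |D(s)|^{-1/2}$, using the identity $\int_0^L h(s)\, ds = \int_0^\infty |\{0<s<L: h(s)>\lambda\}|\, d\lambda$ valid for $h\ge 0$. With $h(s)=|D(s)|^{-1/2}$ we have $h(s)>\lambda$ if and only if $|D(s)|<\lambda^{-2}$, so
\[
  \int_0^L \frac{ds}{\sqrt{|D(s)|}} = \int_0^\infty \big|\{0<s<L:\, |D(s)|<\lambda^{-2}\}\big|\, d\lambda .
\]

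Next I would split the $\lambda$-integral at $\lambda=1$ and apply Lemma \ref{lem:sublevel}. For $0<\lambda\le 1$ the sublevel set has measure at most $L$ (trivially, as in the proof of the lemma), contributing a finite amount $\le L$. For $\lambda>1$ we have $\lambda^{-2}<1$, so \eqref{eq:sublevel1} gives $\big|\{0<s<L:\, |D(s)|<\lambda^{-2}\}\big|\lesssim \lambda^{-2}$, and $\int_1^\infty \lambda^{-2}\, d\lambda<\infty$. Adding the two contributions yields \eqref{eq:sublevel2}. Note that the hypotheses of Lemma \ref{lem:sublevel} are exactly those inherited from its statement, and under Assumption \ref{def:d0conditions} (piecewise $W^{1,1}$ and bounded away from zero) these are satisfied, since a piecewise $W^{1,1}$ function changes sign only finitely many times on $[0,L]$; so the corollary applies in the setting relevant to Proposition \ref{prop:rep}.

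There is essentially no hard part here: the only thing to be a little careful about is that the exponent $1/2$ is what makes the tail $\lambda^{-2}$ integrable at infinity — any exponent strictly less than $1$ would work, but $1$ itself would not, which is precisely the divergence flagged in the Remark following Proposition \ref{prop:rep} for the integral of $|D(s)|^{-1}$. One should also make sure the layer-cake identity is applied to a genuinely nonnegative (indeed, a.e. finite) integrand, which holds since $D$ has only finitely many zeros and $|D|^{-1/2}$ is locally integrable away from them; the computation above shows it is in fact globally integrable on $[0,L]$.
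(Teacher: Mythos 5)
Your argument is correct and is essentially the paper's own proof: both rewrite $\int_0^L |D(s)|^{-1/2}\,ds$ via a layer-cake/Fubini identity (the paper uses $|D(s)|^{-1/2}=c\int_{|D(s)|}^\infty \alpha^{-3/2}\,d\alpha$, which is your distribution-function formula after the substitution $\alpha=\lambda^{-2}$) and then invoke the sublevel estimate \eqref{eq:sublevel1} of Lemma \ref{lem:sublevel}, with finiteness coming from the integrability of $\alpha^{-3/2}\min(\alpha,1)$, i.e.\ exactly the splitting at $\lambda=1$ you perform. Your closing remarks on why the exponent $1/2$ (rather than $1$) is essential and on the hypotheses being inherited from Lemma \ref{lem:sublevel} are accurate and consistent with the paper.
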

\bpf
Note
 \beq
  \begin{split}
    \int_0^L \frac{1}{\sqrt{D(s)}}\, ds
   &=
    \frac{2}{3} \int_0^L \int_{\alpha>\abs{D(s)}} \alpha^{-3/2}\, d\alpha d s 
   = \frac{2}{3} \int_0^\infty \alpha^{-3/2} |\{ 0<s<L: \abs{D(s)}< \alpha \}|\, d\alpha \\
   &
    \lesssim
     \int_0^\infty \alpha^{-3/2} \min(\alpha,1)\, d\alpha
      <\infty
  \end{split}
 \eeq
 due to \eqref{eq:sublevel1} and since the integrand in the last integral is integrable for $\alpha$ close to $0$ and close to $\infty$.
\epf

\begin{lem}\label{lem:intbyparts1}
Assume that $d_0$ is piecewise $W^{1,1}$ on $[0,L]$ and bounded away from zero. Then, for all $a\in \R$,
 \beq\label{eq:intbyparts1}
  \left| \int_0^L e^{iaD(s)}\, ds\right|
  \lesssim
  \frac{1}{\max(|a|,1)}
 \eeq
where $D(s):= \int_0^s d_0(\alpha)\, d\alpha $.
\end{lem}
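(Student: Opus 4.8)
The plan is to prove the bound \eqref{eq:intbyparts1} by splitting into the trivial regime $|a|\le 1$ and the oscillatory regime $|a|>1$, and in the latter to gain decay via integration by parts on each piece of the partition where $d_0$ (hence $D'=d_0$) is $W^{1,1}$ and bounded away from zero. First, for $|a|\le 1$ we simply estimate $\big|\int_0^L e^{iaD(s)}\,ds\big|\le L \lesssim 1 = 1/\max(|a|,1)$, so nothing is needed there. For $|a|>1$ fix the partition $0=c_0<c_1<\cdots<c_N=L$ from Assumption~\ref{def:d0conditions}(a) on whose open intervals $I_j=(c_j,c_{j+1})$ we have $d_0\in W^{1,1}(I_j)$ and $|d_0|\ge\delta:=\essinf|d_0|>0$. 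On such an interval I would write $e^{iaD(s)}=\frac{1}{ia}\frac{d}{ds}\!\left(\frac{e^{iaD(s)}}{d_0(s)}\right)+\frac{1}{ia}\,\frac{d_0'(s)}{d_0(s)^2}\,e^{iaD(s)}$, using $\tfrac{d}{ds}e^{iaD(s)}=ia\,d_0(s)e^{iaD(s)}$, which is legitimate since $1/d_0$ is absolutely continuous on $I_j$ (a $W^{1,1}$ function bounded away from zero has an absolutely continuous reciprocal). Integrating this identity over $I_j$ gives
\[
 \int_{I_j} e^{iaD(s)}\,ds
 = \frac{1}{ia}\left[\frac{e^{iaD(s)}}{d_0(s)}\right]_{c_j^+}^{c_{j+1}^-}
   + \frac{1}{ia}\int_{I_j} \frac{d_0'(s)}{d_0(s)^2}\,e^{iaD(s)}\,ds .
\]

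Now I would estimate the two terms. The boundary term is bounded by $\frac{2}{|a|\,\delta}$ per interval, so summing over the $N$ intervals contributes $\lesssim N/(|a|\delta)\lesssim 1/|a|$. For the integral term, $\big|\frac{d_0'(s)}{d_0(s)^2}\big|\le \delta^{-2}|d_0'(s)|$, and $d_0'\in L^1(I_j)$ by the piecewise $W^{1,1}$ assumption, so $\frac{1}{|a|}\int_{I_j}|d_0'(s)|/\delta^2\,ds\le \frac{1}{|a|\delta^2}\|d_0'\|_{L^1(I_j)}$; summing over $j$ gives $\lesssim \frac{1}{|a|\delta^2}\|d_0'\|_{L^1([0,L])}$, which is finite by hypothesis. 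Adding $N$ such pieces, $\big|\int_0^L e^{iaD(s)}\,ds\big|\lesssim \frac{1}{|a|}\big(\tfrac{2N}{\delta}+\tfrac{1}{\delta^2}\|d_0'\|_{L^1}\big)\lesssim \frac{1}{|a|}=\frac{1}{\max(|a|,1)}$ when $|a|>1$. Combining the two regimes yields \eqref{eq:intbyparts1}. (Here the implicit constant depends only on $L$, $\delta$, the number of pieces $N$, and $\|d_0'\|_{L^1}$, all of which are fixed by the dispersion profile; since the lemma's $\lesssim$ hides exactly such a profile-dependent constant, this is fine.)

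The main technical obstacle is the justification of the integration-by-parts identity on each piece: one must be sure that $s\mapsto e^{iaD(s)}/d_0(s)$ is genuinely absolutely continuous on $I_j$ (so that the fundamental theorem of calculus applies and the boundary values $d_0(c_j^+)$, $d_0(c_{j+1}^-)$ exist as one-sided limits), and that the product rule for weak derivatives is valid here. This follows because $d_0\in W^{1,1}(I_j)$ embeds into $C(\overline{I_j})$ in one dimension — hence $d_0$ has well-defined one-sided limits at the endpoints and, being bounded below by $\delta$ there, $1/d_0$ is bounded and absolutely continuous with weak derivative $-d_0'/d_0^2\in L^1(I_j)$ — and $e^{iaD(s)}$ is Lipschitz. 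A secondary point worth a sentence is that $D$ itself is continuous on $[0,L]$ even though $d_0$ may jump across the $c_j$, so the oscillatory integral $\int_0^L e^{iaD(s)}\,ds$ is unambiguous; the jumps of $d_0$ only produce the finitely many boundary terms above, which is precisely why finiteness of the partition is used.
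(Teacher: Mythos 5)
Your proposal is correct and follows essentially the same route as the paper: the trivial bound $\le L$ for small $|a|$, and on each interval $I_j$ of the partition an integration by parts against $1/d_0$, with the boundary terms controlled by $2/(|a|\delta)$ and the bulk term by $\|d_0'\|_{L^1(I_j)}/(|a|\delta^2)$, summed over the finitely many pieces. Your extra remarks justifying that $1/d_0$ is absolutely continuous on each piece (via the one-dimensional $W^{1,1}$ embedding and $|d_0|\ge\delta$) and that $D$ is continuous across the partition points are correct refinements of details the paper leaves implicit.
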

\bpf
Certainly, $\big| \int_0^L e^{iaD(s)}\, ds\big| \le L <\infty$. On the other hand, let $N<\infty$ be the number of intervals on which $d_0$ is absolutely continuous and
$0=c_0<c_1<\ldots<c_N=L$ the partition of $[0,L]$ such that $d_0$ is absolutely continuous on each
interval $I_k=(c_{k-1},c_k)$. Then
 \beq\label{eq:sum-1}
  \int_0^L e^{iaD(s)}\, ds = \sum_{j=1}^N  \int_{I_j} e^{iaD(s)}\, ds
 \eeq
and since $D'=d_0$
 \beq\label{eq:intbyparts-base}
  e^{iaD(s)} = \Big( e^{iaD(s)} \Big)' \frac{1}{ia d_0(s)} ,
 \eeq
an integration by parts shows
 \beq
 \begin{split}
  \int_{I_j} e^{iaD(s)}\, ds
   &=
    \frac{1}{ia}\int_{I_j} \big( e^{iaD(s)} \big)' \frac{1}{d_0(s)} \, ds =
    \frac{1}{ia}
    \Big\{\Big[\frac{e^{iaD(s)}}{d_0(s)}\Big]_{c_{j-1}}^{c_j}
    + \int_{I_j} e^{iaD(s)} \frac{{d_0}'(s)}{d_0(s)^2} \, ds \Big\} .
 \end{split}
 \eeq
Note that ${d_0}'$ exists as an $L^1$-function on each interval $I_j$.
Thus, since $\delta:= \essinf\limits_{0\le s\le L}|d_0(s)| > 0 $,
 \beq\label{eq:ibp-local-1}
  \Big| \int_{I_j} e^{iaD(s)}\, ds \Big|
  \le
  \frac{1}{|a|}
    \Big\{
      \frac{2}{\delta} +  \frac{1}{\delta^2} \int_{I_j}|{d_0}'|\,ds
    \Big\}.
 \eeq
Summing over $j$ in \eqref{eq:ibp-local-1} using \eqref{eq:sum-1} yields
\eqref{eq:intbyparts1} and shows that the implicit constant
in \eqref{eq:intbyparts1} depends on $N$, $\delta$, and $\|{d_0}'\|_{L^1(I_j)}, j=1,\ldots,N$.
\epf

For the last bound  we need
\begin{lem}\label{lem:intbyparts2}
 Assume that $d_0$ is piecewise $W^{1,1} $  and bounded away from zero on $[0,L]$. Then, for all $a\in \R$, $a\not= 0$ and $R> 0$, we have
  \beq\label{eq:intbyparts2}
   \Big|
     \int\limits_{{|D|\lesssim R}} e^{-i\frac{a}{4D(s)}} \frac{1}{\abs{D(s)}}\, ds
   \Big|
   \lesssim
   \frac{R}{\abs{a}}
  \eeq
where $D(s):= \int_0^s d_0(\alpha)\, d\alpha$.
\end{lem}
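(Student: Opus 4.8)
The plan is to run the integration-by-parts argument of Lemma~\ref{lem:intbyparts1}, now based on the identity
\[
 \frac{d}{ds}\,e^{-i\frac{a}{4D(s)}} = e^{-i\frac{a}{4D(s)}}\,\frac{i a\, d_0(s)}{4D(s)^2},
\]
which, on any interval where $D$ has constant sign, rewrites the integrand as
\[
 e^{-i\frac{a}{4D(s)}}\,\frac{1}{|D(s)|} = \pm\,\frac{4}{i a}\,\Big(e^{-i\frac{a}{4D(s)}}\Big)'\,\frac{D(s)}{d_0(s)}\,,
\]
the sign being that of $D$ on the interval. The decisive feature is that the amplitude $D/d_0$ \emph{vanishes at the zeros of $D$}, so the boundary terms produced by integrating by parts are harmless there, while elsewhere they are controlled by $|D|/\delta\lesssim R/\delta$, where $\delta:=\essinf_{[0,L]}|d_0|>0$ by \eqref{eq:bounded-away-zero}. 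One preliminary point: since $d_0$ has mean zero we have $D(0)=D(L)=0$, so $|D|^{-1}$ is not integrable near the zeros of $D$ and \eqref{eq:intbyparts2} has to be read as the oscillatory integral $\lim_{\veps\downarrow0}\int_{\{|D|\lesssim R\}\cap\{|D|>\veps\}}$; the bound below will be uniform in $\veps$ and will show the limit exists.

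First I would use the piecewise-$W^{1,1}$ hypothesis to fix a partition $0=c_0<c_1<\dots<c_N=L$ with $d_0\in W^{1,1}(I_k)$, $I_k=(c_{k-1},c_k)$. On each $I_k$ the continuous representative of $d_0$ satisfies $|d_0|\ge\delta>0$, hence is of constant sign; therefore $D$ is continuous on $[0,L]$, strictly monotone on each $I_k$, and has at most one zero in each $I_k$, so finitely many zeros altogether. Consequently, for every $\veps>0$ the set $\{s\in I_k:\ \veps<|D(s)|\lesssim R\}$ is a union of at most two intervals, so globally $\{s:\ \veps<|D(s)|\lesssim R\}=\bigcup_{p=1}^{P}J_p$ with $P\le 2N$ uniformly in $\veps$, where on each $J_p$ the functions $D$ and $d_0$ are absolutely continuous with constant sign and $|d_0|\ge\delta$, and every endpoint of every $J_p$ is either a partition point $c_k$ with $|D|\lesssim R$, or a point with $|D|=\veps$, or a point with $|D|\asymp R$; in particular $|D|\lesssim R$ at all endpoints once $\veps$ is small.

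Then on each $J_p$ I would integrate by parts using the identity above together with $(D/d_0)'=1-Dd_0'/d_0^2$, obtaining a boundary term of size $\lesssim R/(\delta|a|)$ and a bulk term $\frac{4}{ia}\int_{J_p}e^{-i\frac{a}{4D(s)}}\big(1-\tfrac{D d_0'}{d_0^2}\big)\,ds$, whose modulus is at most $\frac{4}{|a|}\big(|J_p|+\delta^{-2}R\int_{J_p}|d_0'|\big)$ since $|D|\lesssim R$ on $J_p$. Summing over the $P\le 2N$ intervals, I would estimate $\sum_p|J_p|\le |\{s:|D(s)|\lesssim R\}|\lesssim\min(R,1)\le R$ by the sublevel bound Lemma~\ref{lem:sublevel}, and $\sum_p\int_{J_p}|d_0'|\le\sum_{k=1}^N\|d_0'\|_{L^1(I_k)}<\infty$. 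This gives $\big|\int_{\{|D|\lesssim R\}\cap\{|D|>\veps\}}\big|\lesssim R/|a|$ with an implicit constant depending only on $N$, $\delta$, $L$ and $\sum_k\|d_0'\|_{L^1(I_k)}$, uniformly in $\veps$. Letting $\veps\downarrow0$ then finishes the proof: the boundary terms sitting at the zeros of $D$ carry the factor $|D|=\veps\to0$ and disappear, while the bulk integral converges by dominated convergence (its integrand is bounded on $\{|D|\lesssim R\}$), so the oscillatory integral exists and obeys \eqref{eq:intbyparts2}.

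The main obstacle, I expect, is not any individual estimate but the bookkeeping of the second step: showing that the truncated domain decomposes into \emph{boundedly many} intervals — with a bound uniform in $\veps$ — on which the integration by parts is legitimate, and that \emph{every} boundary term not located at a zero of $D$ still satisfies $|D|\lesssim R$, so that these terms contribute $\lesssim R/|a|$ rather than merely $\lesssim 1/|a|$. The constant-sign-on-each-piece observation, which forces $D$ to be monotone and hence have at most one zero on each $I_k$, is exactly what makes this count uniform and keeps the final constant from blowing up as $\veps\to0$.
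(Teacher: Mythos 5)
Your proof is correct and follows essentially the same route as the paper: the identity $\bigl(e^{-i\frac{a}{4D}}\bigr)'=e^{-i\frac{a}{4D}}\,\frac{iad_0}{4D^2}$, integration by parts piece by piece with boundary terms of size $\lesssim R/\delta$, the hypotheses $|d_0|\ge\delta$ and $d_0'\in L^1(I_k)$ controlling the bulk term, and the sublevel estimate of Lemma~\ref{lem:sublevel} supplying the factor $R$. The only difference is in the regularization bookkeeping: the paper defines the integral with a smooth cutoff $\varphi_R(D)$ (Remark~\ref{rem:oscillatory-integral}), so the derivative hitting $\varphi_R$ yields the term absorbed by the sublevel bound and no explicit $\veps$-truncation at the zeros of $D$ appears, whereas your sharp truncation with the $\veps\downarrow 0$ limit treats those points, and the uniform count of constant-sign subintervals, more explicitly.
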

\begin{rem}\label{rem:oscillatory-integral}
 First let's make precise what we mean by the integral in \eqref{eq:intbyparts2}:
 Choose a smooth cut--off function $\varphi:\R\to [0,1]$ with $\varphi(s)= 1$ for $\abs{s}\le 1$,
 $\varphi(s)= 0$ for $\abs{s}\ge 2$ and for $R>0$ put $\varphi_R(s):= \varphi(s/R)$. Then we set
 \beq
  \int\limits_{|D|\lesssim R} e^{-i\frac{a}{4D(s)}} \frac{1}{\abs{D(s)}}\, ds
  :=
  \int_0^L e^{-i\frac{a}{4D(s)}} \frac{\varphi_R(D(s))}{\abs{D(s)}}\, ds .
 \eeq
\end{rem}
\bpf[Proof of Lemma \ref{lem:intbyparts2}]
 Again let  $N<\infty$ be the number of intervals on which $d_0$ is absolutely continuous and
 $0=c_0<c_1<\ldots<c_N=L$ the partition of $[0,L]$ such that $d_0$ is absolutely continuous on each
 interval $I_k=[c_{k-1},c_k)$ and
 \beq\label{eq:sum-2}
  \int\limits_{|D|\lesssim R} e^{-i\frac{a}{4D(s)}} \frac{1}{\abs{D(s)}}\, ds
  = \sum_{j=1}^N  \int_{I_j} e^{-i\frac{a}{4D(s)}} \frac{\varphi_R(D(s))}{\abs{D(s)}}\, ds .
 \eeq
 Since
 \bdm
  e^{-i\frac{a}{4D(s)}}= \big(e^{-i\frac{a}{4D(s)}}\big)'  \frac{4 D(s)^2}{ia D'(s)}
 \edm
 and $D'=d_0$, an integration by parts gives
  \begin{multline}
   \int_{I_j} e^{-i\frac{a}{4D}} \frac{\varphi_R}{\abs{D}}\, ds
    =
     \frac{4}{ia}
     \int_{I_j} \big(e^{-i\frac{a}{4D}}\big)' \varphi_R(D) \frac{\abs{D}}{d_0}\, ds = \\
    =
     \frac{4}{ia}
     \left\{
       \left[
         e^{\frac{ia}{4D}} \varphi_R(D) \frac{\abs{D}}{d_0}
       \right]_{c_{j-1}}^{c_j}
       -
       \int_{I_j} e^{\frac{ia}{4D}}
       \Big[
         \varphi'_R (D)\frac{\abs{D}}{R} +\varphi_R(D)
         \big(
           \sgn(D) - \frac{\abs{D}{d_0}'}{{(d_0)}^2}
         \big)
       \Big]\, ds
     \right\}
  \end{multline}
 where $\sgn(r)= r/\abs{r}$ for $r\not=0$ and $\sgn(0)=0$. Because of
 $\max(|\varphi'_R|,\varphi_R)\lesssim \varphi_{2R}$ and $\varphi_{2R}(D)|D|\lesssim \varphi_{2R}(D)R$, taking the modulus above and using
 $\delta := \inf_{0\le s\le L} |d_0(s)|>0 $ yields
  \beq
   \left| \int_{I_j} e^{\frac{ia}{4D(s)}} \frac{\varphi_R(D(s))}{\abs{D(s)}}\, ds \right|
    \lesssim
     \frac{1}{|a|} \left\{
       \frac{R}{\delta}
       +
       \int_{I_j}\varphi_{2R}(D(s))\, ds + \frac{R}{\delta^2}\int_{I_j}  |d_0(s)|\, ds
     \right\} .
  \eeq
 Summing over $j$ we see
 \beq
  \begin{split}
  \left| \int_0^L e^{\frac{ia}{4D(s)}} \frac{\varphi_R(D(s))}{\abs{D(s)}}\, ds \right|
   &\lesssim
    \frac{1}{|a|}\big(R + \int_0^L \varphi_{2R}(D(s))\, ds\big) \\
   &\lesssim
    \frac{1}{|a|}\big(R + |\{ 0\le s\le L: |D(s)|\lesssim R \} | \big)
    \lesssim
    \frac{R}{|a|}
   \end{split}
 \eeq
 where in the last line we used  Lemma \ref{lem:sublevel}. This proves \eqref{eq:intbyparts2}.
\epf

The last bound we need is an immediate consequence from Lemma \ref{lem:intbyparts2}.
\begin{cor}\label{cor:small-D}
 Assume that $d_0$ is piecewise $W^{1,1} $  and bounded away from zero on $[0,L]$. Then, for all $a\in \R$, $a\not= 0$, we have
  \beq\label{eq:small-D}
   \Big|\,
     \int\limits_{{|D|\lesssim |a|}} e^{\frac{ia}{4D(s)}} \frac{1}{\abs{D(s)}}\, ds
   \Big|
   \lesssim
   \frac{1}{\max(\abs{a},1)}
  \eeq
where $D(s):= \int_0^s d_0(\alpha)\, d\alpha$.
\end{cor}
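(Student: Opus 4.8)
The plan is to deduce \eqref{eq:small-D} directly from Lemma \ref{lem:intbyparts2}; the only observation needed is that the cut-off scale occurring in \eqref{eq:small-D} may be capped at a fixed constant. Recall from Remark \ref{rem:oscillatory-integral} that the left-hand side of \eqref{eq:small-D} is by definition
\[
  \int\limits_{|D|\lesssim |a|} e^{\frac{ia}{4D(s)}}\frac{1}{|D(s)|}\, ds
   = \int_0^L e^{\frac{ia}{4D(s)}}\frac{\varphi_{|a|}(D(s))}{|D(s)|}\, ds ,
\]
so it carries the cut-off $\varphi_{|a|}(D(\cdot))$ at scale $R=|a|$. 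Since $d_0\in L^1([0,L])$, its primitive $D(s)=\int_0^s d_0(\alpha)\,d\alpha$ is continuous and $L$-periodic, hence bounded; I would put $M:=\sup_{s\in[0,L]}|D(s)|$, a finite, strictly positive constant (positive because $d_0$ is bounded away from zero, so not a.e.\ zero).

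First I would record the elementary identity $\varphi_{|a|}(D(s))=\varphi_{\min(|a|,M)}(D(s))$ for every $s\in[0,L]$: if $|a|\le M$ this is trivial, while if $|a|>M$ then $|D(s)|\le M<|a|$ for all $s$, so $\varphi_{|a|}(D(s))=1=\varphi_M(D(s))$ because $\varphi\equiv 1$ on $[-1,1]$. Hence the integral above equals $\int_0^L e^{\frac{ia}{4D(s)}}\varphi_{\min(|a|,M)}(D(s))\,|D(s)|^{-1}\,ds$, which is exactly the cut-off oscillatory integral of Lemma \ref{lem:intbyparts2} with $R=\min(|a|,M)>0$ (and with $-a$ in place of $a$, which does not affect the bound). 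Applying that lemma gives
\[
  \Bigl|\int\limits_{|D|\lesssim |a|} e^{\frac{ia}{4D(s)}}\frac{1}{|D(s)|}\, ds\Bigr|
   \lesssim \frac{\min(|a|,M)}{|a|} = \min\Bigl(1,\frac{M}{|a|}\Bigr).
\]

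To conclude I would use the trivial inequality $\min(1,M/|a|)\le \max(1,M)\,\min(1,1/|a|)=\max(1,M)/\max(|a|,1)$, which yields \eqref{eq:small-D} after absorbing the constant $\max(1,M)$ into $\lesssim$. I do not expect any genuine obstacle here — the corollary really is immediate — but it is worth being clear about where the (trivial) input sits: for $|a|\le 1$ Lemma \ref{lem:intbyparts2} with $R=|a|$ already gives the bound $\lesssim 1=1/\max(|a|,1)$, and it is only for $|a|>1$ that one needs the $1/|a|$ decay, which is precisely what the boundedness of $D$ supplies by pushing the effective cut-off scale down to the constant $M$ instead of $|a|$.
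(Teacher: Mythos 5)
Your argument is correct and is essentially the paper's own proof: both cap the cut-off scale at $R=\min(|a|,\|D\|_\infty)$, note that the cut-off integral over $\{|D|\lesssim|a|\}$ coincides with the one at scale $R$ since $|D(s)|\le\|D\|_\infty$, and then apply Lemma \ref{lem:intbyparts2} to get the bound $\lesssim\min(|a|,\|D\|_\infty)/|a|\lesssim 1/\max(|a|,1)$. Your extra remarks (the identity $\varphi_{|a|}(D)=\varphi_{\min(|a|,M)}(D)$, the sign of $a$, and $M>0$) just make explicit details the paper leaves implicit.
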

\bpf
Choose $R=\min(\abs{a},\|D\|_\infty)$, where
$\|D\|_\infty = \sup_{s\in[0,L]}|D(s)|\le \int_0^L |d_0(\nu)|\, d\nu<\infty$. Then, since
$|D(s)|\le \|D\|_\infty$ for all $s\in[0,L]$, the bound \eqref{eq:intbyparts2} from Lemma \ref{lem:intbyparts2} gives
 \bdm
  \Big|
   \int\limits_{{|D|\lesssim |a|}} e^{\frac{ia}{4D(s)}} \frac{1}{\abs{D(s)}}\, ds
  \Big|
  =
  \Big|\,
     \int\limits_{{|D|\lesssim R}} e^{\frac{ia}{4D(s)}} \frac{1}{\abs{D(s)}}\, ds
  \Big|
  \lesssim
  \frac{\min(|a|,\|D\|_\infty)}{|a|}
 \edm
which is \eqref{eq:small-D}.
\epf

%%%%%%%%%%%%
\section{Reduction to Multilinear Estimates}\label{sec:multilinear}
%%%%%%%%%%%%

The representations for $\calQ$ in Proposition~\ref{prop:rep} motivate the following
\begin{definition}\label{def:mult-sub-linear}
 For a piecewise $W^{1,1}$ dispersion profile $d_0$ on $[0,L]$ which is bounded away from zero we define the multi-sub-linear functions
   \begin{align}
    K^1(h_1,h_2,h_3,h_4)
     &:=
      \int_{\R^4}
       \frac{1}{L}\Big| \int_0^L e^{-i\frac{a(\eta)}{4D(s)}} \frac{ds}{|D(s)|} \Big|
       \prod_{\nu=1}^4 |h_\nu(\eta_\nu)|\, \delta(\eta_1-\eta_2+\eta_3-\eta_4) d\eta
       \label{eq:def-K1}\\
    \intertext{and }
    K^2(h_1,h_2,h_3,h_4)
     &:=
      \int_{\R^4}
       \frac{1}{L}\Big| \int_0^L e^{iD(s)a(\eta)}\, ds \Big|
       \prod_{\nu=1}^4 |h_\nu(\eta_\nu)| \, \delta(\eta_1-\eta_2+\eta_3-\eta_4) d\eta
       \label{eq:def-K2}
   \end{align}
 where $a(\eta):= \eta_1^2-\eta_2^2+\eta_3^2-\eta_4^2$ and $D(s)= \int_0^sd_0(\alpha)\,d\alpha$.
\end{definition}
\begin{rem}
   The multi-sub-linear functionals $K^1$ and $K^2$ are certainly defined for all (measurable) functions $h_1,h_2,h_3,h_4$, taking the value $\infty$, if necessary, but it seems unclear, at first, when they are finite and hence useful at all. For the convenience of the reader, we note that Proposition \ref{prop:mult-lin-bounds} below gives us the bounds
   \bdm
     K^1(h_1,h_2,h_3,h_4) \lesssim \prod_{n=1}^4 \|h_n\|
   \edm
   and
   \bdm
     K^2(h_1,h_2,h_3,h_4) \lesssim \prod_{n=1}^4 \|h_n\| ,
   \edm
   where the implicit constant depends only on the dispersion profile $d_0$ and is finite if $d_0$ is piecewise $W^{1,1}$ and bounded away from zero.
   Thus $K^1$ and $K^2$ are well--defined for all $h_n\in L^2(\R)$, $n=1,2,3,4$.
\end{rem}
A judicious use of the triangle inequality in \eqref{eq:rep-real-space} and \eqref{eq:rep-fourier-space} yields the following immediate
\begin{cor} \label{cor:Q-K-bound} The bounds
 \beq\label{eq:Q-K-bound}
 \begin{split}
  |Q(f_1,f_2,f_3,f_4)| & \le \frac{1}{4\pi } K^1(f_1,f_2,f_3,f_4) %\label{eq:Q-K1-bound}
  \\
  |Q(f_1,f_2,f_3,f_4)| & \le \frac{1}{2\pi } K^2(\hatt{f}_1,\hatt{f}_2,\hatt{f}_3,\hatt{f}_4) %\label{eq:Q-K1-bound}
 \end{split}
 \eeq
hold, where $\hatt{f}$ is the Fourier transform of $f$.
\end{cor}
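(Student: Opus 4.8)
The plan is to obtain both inequalities directly from the two integral representations of $\calQ$ established in Proposition~\ref{prop:rep} (the $Q$ in the statement being, of course, the four-linear functional $\calQ$), using nothing more than the triangle inequality applied to the outer $\eta$-integration. First I would fix Schwartz functions $f_1,\ldots,f_4$, so that every integral in \eqref{eq:rep-real-space} and \eqref{eq:rep-fourier-space} is meaningful, with the inner $s$-integral in \eqref{eq:rep-real-space} read in the regularised (oscillatory) sense of Remark~\ref{rem:oscillatory-integral}. Since the bounds are stated for general $f_\nu$, the general case would then follow by the usual density/limiting argument together with the multi-linear bounds announced in Proposition~\ref{prop:mult-lin-bounds}, but for the corollary itself only the Schwartz case is needed.

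Starting from \eqref{eq:rep-real-space}, I would note three elementary facts: the Dirac factor $\delta(\eta_1-\eta_2+\eta_3-\eta_4)$ defines a \emph{non-negative} (surface) measure on $\R^4$; the modulus of the amplitude factors as $|\ol{f_1(\eta_1)}f_2(\eta_2)\ol{f_3(\eta_3)}f_4(\eta_4)|=\prod_{\nu=1}^4|f_\nu(\eta_\nu)|$; and the remaining $s$-dependent factor is exactly $\frac1L\int_0^L e^{-ia(\eta)/(4D(s))}\,ds/|D(s)|$, whose modulus is precisely the kernel appearing in the definition \eqref{eq:def-K1} of $K^1$. Pulling $|\,\cdot\,|$ inside the $\eta$-integral and comparing with Definition~\ref{def:mult-sub-linear} yields $|\calQ(f_1,f_2,f_3,f_4)|\le\frac{1}{4\pi}K^1(f_1,f_2,f_3,f_4)$. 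Running the identical argument on \eqref{eq:rep-fourier-space}, now with amplitudes $\hatt{f}_\nu$ and the (genuinely absolutely convergent) oscillatory factor $\frac1L\int_0^L e^{iD(s)a(\eta)}\,ds$ matching the kernel in \eqref{eq:def-K2}, gives $|\calQ(f_1,f_2,f_3,f_4)|\le\frac{1}{2\pi}K^2(\hatt{f}_1,\hatt{f}_2,\hatt{f}_3,\hatt{f}_4)$.

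There is essentially no hard part here; as the text already remarks, the estimate is immediate once Proposition~\ref{prop:rep} is in place. The only points I would take care to mention are bookkeeping matters: the representation \eqref{eq:rep-real-space} and the definition \eqref{eq:def-K1} must be read with the inner $s$-integral interpreted in the \emph{same} regularised sense; the possible singularity of $1/|D(s)|$ on the set $\{a(\eta)=0\}$ is harmless, since $\{a(\eta)=0\}$ is a null set on the hyperplane $\{\eta_1-\eta_2+\eta_3-\eta_4=0\}$ against which one integrates a non-negative quantity; and no interchange of integration order beyond the one already justified inside the proof of Proposition~\ref{prop:rep} is needed.
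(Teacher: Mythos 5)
Your proposal is correct and follows exactly the paper's route: the corollary is obtained by applying the triangle inequality to the two representations of $\calQ$ from Proposition~\ref{prop:rep}, whose oscillatory kernels and amplitude products match the definitions of $K^1$ and $K^2$ verbatim. The additional bookkeeping remarks (regularised inner integral, density for general $f_\nu$) are fine but not needed beyond what the paper already records.
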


The following multi-sub-linear bounds are crucial for our
result.
\begin{prop}[Multi--sub-linear bounds]\label{prop:mult-lin-bounds}
Assume that the dispersion profile $d_0$ is piecewise absolutely continuous on $[0,L]$ and bounded away from zero. Then
 \beq\label{eq:mult-lin}
  K^j(h_1,h_2,h_3,h_4)\lesssim \prod_{n=1}^4 \|h_n\|
 \eeq
 for $j=1,2$. If for some $l,m\in\{1,2,3,4\}$ one has
 $\tau:= \dist(\supp(h_l),\supp(h_m))>0$, then
 \beq\label{eq:mult-lin-loc}
  K^j(h_1,h_2,h_3,h_4)\lesssim \tau^{-1/2}\prod_{n=1}^4 \|h_n\|
 \eeq
 for $j=1,2$. The implicit constant in the above bounds depends only on $d_0$.
\end{prop}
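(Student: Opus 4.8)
The plan is to separate the two oscillatory mechanisms present in $K^j$: the inner integral over one period of the dispersion profile, and the outer integral over the resonant set $\{\eta_1-\eta_2+\eta_3-\eta_4=0\}$. Accordingly, set
\[
m_1(a):=\frac1L\Big|\int_0^L e^{-i\frac{a}{4D(s)}}\frac{ds}{|D(s)|}\Big|,\qquad m_2(a):=\frac1L\Big|\int_0^L e^{iD(s)a}\,ds\Big|,
\]
so that $K^j(h_1,h_2,h_3,h_4)=\int_{\R^4}m_j(a(\eta))\prod_{\nu=1}^4|h_\nu(\eta_\nu)|\,\delta(\eta_1-\eta_2+\eta_3-\eta_4)\,d\eta$. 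The first task is pointwise decay of $m_1,m_2$, the second is a multi-linear bound for the resulting weighted convolution-type integral.

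For the kernels, Lemma~\ref{lem:intbyparts1} gives at once $m_2(a)\lesssim\min(1,|a|^{-1})$. For $m_1$ I would prove $m_1(a)\lesssim|a|^{-1}$ for all $a\neq0$, and in addition $m_1(a)\lesssim|a|^{-1/2}$ when $|a|\le1$. The first is Lemma~\ref{lem:intbyparts2} applied with $R=\|D\|_\infty=\sup_{[0,L]}|D|$, for which the cut-off $\varphi_R(D(s))$ is identically $1$. For the second, write $\int_0^L=\int_0^L\varphi_{|a|}(D(s))(\cdots)\,ds+\int_0^L(1-\varphi_{|a|}(D(s)))(\cdots)\,ds$; the first piece is $\lesssim1$ by Corollary~\ref{cor:small-D}, while the second is supported on $\{|D(s)|\gtrsim|a|\}$ and is therefore $\le|a|^{-1/2}\int_0^L|D(s)|^{-1/2}\,ds$, which is finite by Corollary~\ref{cor:sublevel}.

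For the multi-linear step I would integrate out the $\delta$ and use the identity $a(\eta)=2(\eta_1-\eta_2)(\eta_1-\eta_4)$, valid on the resonant set. Writing $x:=\eta_1$, $p:=\eta_1-\eta_2$, $q:=\eta_1-\eta_4$ (so $\eta_3=x-p-q$, $\eta_4=x-q$, $a(\eta)=2pq$, and the Jacobian is $1$), one gets
\[
K^j(h_1,h_2,h_3,h_4)=\int_{\R^3}m_j(2pq)\,\big|h_1(x)h_2(x-p)h_3(x-p-q)h_4(x-q)\big|\,dx\,dp\,dq.
\]
For fixed $p,q$, Cauchy--Schwarz in $x$ pairing $h_1$ with $h_3(\cdot-p-q)$ and $h_2(\cdot-p)$ with $h_4(\cdot-q)$ bounds the $x$-integral by $\alpha(p+q)^{1/2}\beta(q-p)^{1/2}$, where $\alpha(\sigma):=\int_\R|h_1(x)|^2|h_3(x-\sigma)|^2\,dx$ and $\beta(\rho):=\int_\R|h_2(y+\rho)|^2|h_4(y)|^2\,dy$, with $\|\alpha\|_1=\|h_1\|^2\|h_3\|^2$ and $\|\beta\|_1=\|h_2\|^2\|h_4\|^2$. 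Changing variables to $s=p+q$, $r=q-p$ (so $2pq=\tfrac12(s^2-r^2)$) and using Cauchy--Schwarz once more with the nonnegative weight $m_j(\tfrac12(s^2-r^2))$, the bound \eqref{eq:mult-lin} reduces to the scalar estimate $\sup_{s\in\R}\int_\R m_j\big(\tfrac12(s^2-r^2)\big)\,dr<\infty$ for $j=1,2$, which follows from the pointwise bounds on $m_j$ since the integrand has only integrable square-root singularities at $r=\pm s$ and decays like $r^{-2}$ at infinity.

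Finally, for \eqref{eq:mult-lin-loc}: if $\dist(\supp h_l,\supp h_m)=\tau>0$, then on the support of the integrand $|\eta_l-\eta_m|\ge\tau$, and $\eta_l-\eta_m$ equals one of $\pm\tfrac{s-r}{2},\pm\tfrac{s+r}{2},\pm s,\pm r$; in every case the integration region lies in $\{|s|\ge\tau\}\cup\{|r|\ge\tau\}$, and on each piece one replaces the relevant factor above by $\sup_{|s|\ge\tau}\int_\R m_j(\tfrac12(s^2-r^2))\,dr$. The key point then becomes the decay estimate $\sup_{|s|\ge\tau}\int_\R m_j(\tfrac12(s^2-r^2))\,dr\lesssim\tau^{-1}$ for large $\tau$ (possibly up to a logarithmic factor that a slightly sharper analysis removes); the two Cauchy--Schwarz steps distribute this as the claimed $\tau^{-1/2}$, while bounded $\tau$ is already covered by \eqref{eq:mult-lin}. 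I expect the main obstacle throughout to be the non-absolutely-integrable kernel $|D(s)|^{-1}$ in $K^1$: it cannot be controlled by absolute values and forces one to extract cancellation from the rapid oscillation of $e^{-ia/(4D(s))}$ near the (a priori unknown) zeros of $D$ — precisely the point of Corollaries~\ref{cor:sublevel} and~\ref{cor:small-D} and Lemma~\ref{lem:intbyparts2}.
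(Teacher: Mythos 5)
Your route is genuinely different from the paper's: you first extract pointwise-in-$a$ bounds for the two kernels $m_j(a)$ and then reduce the multilinear estimate, via the parametrization $a=2pq$ and two applications of Cauchy--Schwarz in the coordinates $s=p+q$, $r=q-p$, to the scalar quantity $\sup_{s}\int_\R m_j\big(\tfrac12(s^2-r^2)\big)\,dr$ (the paper instead splits the $s$-integral according to $|D(s)|\lessgtr|a(\eta)|$ and estimates the multilinear integrals directly with the band estimates \eqref{eq:R-loc}, Corollary \ref{cor:sublevel} and an asymmetric Cauchy--Schwarz). The algebra is correct and the scheme works for $j=2$, but for $j=1$ there is a genuine gap at the reduction step: the bound $m_1(a)\lesssim|a|^{-1/2}$ for $|a|\le1$ is too weak to give the claimed scalar estimate. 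With the majorant $\min(|a|^{-1/2},|a|^{-1})$ one gets, at $s=0$, an integrand of size $\sqrt2/|r|$ near $r=0$, so $\int_\R\min(|a|^{-1/2},|a|^{-1})\,dr$ is infinite there and grows like $\log(1/|s|)$ as $s\to0$: the two square-root singularities at $r=\pm s$ merge, and the uniformity in $s$ that your argument needs is lost. The fix stays within your framework: sharpen the small-$a$ bound to $m_1(a)\lesssim 1+\log_+(1/|a|)$ by estimating the non-oscillatory piece via a layer-cake version of Corollary \ref{cor:sublevel}, namely $\int_{\{|D|\gtrsim|a|\}}|D(s)|^{-1}\,ds\lesssim 1+\log_+(1/|a|)$ using Lemma \ref{lem:sublevel}; with this logarithmic bound the quantity $\sup_s\int_\R m_1\big(\tfrac12(s^2-r^2)\big)\,dr$ is indeed finite and \eqref{eq:mult-lin} follows.

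The second problem is the localized bound \eqref{eq:mult-lin-loc}. The decay you need, $\sup_{|s|\ge\tau}\int_\R m_j\big(\tfrac12(s^2-r^2)\big)\,dr\lesssim\tau^{-1}$, is not available from pointwise kernel bounds: already for $m_2$ the bound $\min(1,|a|^{-1})$ is sharp (model case), and then this sup-integral is of size $\tau^{-1}\log\tau$, so your two Cauchy--Schwarz steps deliver $\tau^{-1/2}(\log\tau)^{1/2}$ rather than $\tau^{-1/2}$; the logarithm is an artifact of splitting the weight $m_j$ symmetrically between the two factors, and no ``slightly sharper analysis'' of the kernels will remove it. The paper avoids the loss by distributing Cauchy--Schwarz asymmetrically: on $\{|\eta_1-\eta_2|\ge\tau,\ |\eta_2-\eta_3|\ge1\}$ the squared kernel $|\eta_1-\eta_2|^{-2}|\eta_2-\eta_3|^{-2}$ is put against $|h_1|^2$ alone, so one integrates $|u|^{-2}$ over $|u|\ge\tau$ and gets a clean $\tau^{-1}$ inside the square root; the strip $\{|\eta_2-\eta_3|\le1\}$ is handled separately via $\max(|a|,1)\ge|a|^{1/2}$, and the non-oscillatory part of $K^1$ even gives $\tau^{-1}$ outright because the separation forces $|\eta_2-\eta_3|\lesssim|D(s)|/\tau$. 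So either import that asymmetric step into your coordinates, or accept the logarithmic loss --- which would still suffice for the application in Section \ref{sec:expdecay} (only $c_f(\tau)\to0$ is used) but does not prove \eqref{eq:mult-lin-loc} as stated.
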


As it is highly technical, we defer 
the proof of this Proposition to in 
Appendix~\ref{sec:multilinear-proof}
for the convenience of the reader.

\begin{rems}
 \itemthm \label{rem:boundedness}
  Of course, either bound above together with Proposition \ref{prop:mult-lin-bounds} in the appendix shows
 \beq
  |\calQ(f_1,f_2,f_3,f_4)| \lesssim \prod_{n=1}^4 \|f_n\| .
 \eeq
 Thus $\calQ:\calS^4\to\R$, a priori defined for $f_n$ in the Schwartz class $\calS$, extends by multi--linearity and continuity to a well-defined bounded four-linear functional
 $\calQ: (L^2(\R))^4\to \R$ and, by duality, since
 $\la g, Q(f_1,f_2,f_3) \ra =\calQ(g,f_1,f_2,f_3)$, the three--linear map $Q$
 also extends by multi-linearity and continuity to a three--linear map $Q:L^2(\R)^3\to L^2(\R)$ with
 \beq
  \|Q(f_1,f_2,f_3)\| \lesssim \prod_{j=1}^3 \|f_j\|.
 \eeq 
 This shows that $Q$, respectively $\calQ$, in \eqref{eq:Q}, respectively \eqref{eq:calQ}, are well--defined on $(L^2(\R))^4$, respectively, $(L^2(\R))^3$, as long as $d_0$ is piecewise $W^{1,1}$ and bounded away from zero.  \\[0.2em]
\itemthm  \label{rem:difficulty}
In fact, $Q$ and $\calQ$ are well-defined for a much larger class of dispersion profiles $d_0$, see \cite{HLexistence} for details, 
but for us the conditions of Proposition \ref{prop:mult-lin-bounds} are enough. In previous works, it had turned out to be 
advantageous to do another change of variables: Setting $r=D(s)$, one rewrites the definition of $Q$ in \eqref{eq:Q} and 
of $\calQ$ in \eqref{eq:calQ} as
     \bdm
       Q(f_1,f_2,f_3)= \int T^{-1}_{r}\big[ (T_{r}f_1)(\ol{T_{r}f_2})(T_{r}f_3)  \big]\, \mu(dr)
     \edm
     and
     \bdm
       \calQ(f_1,f_2,f_3,f_4)
       = \int\int  \ol{T_{r}f_1(x)}T_{r}f_2(x)\ol{T_{r}f_3(x)}T_{r}f_4(x)\, \mu(dr) dx
     \edm
    where $\mu$ is a probability measure given by
     \bdm
      \mu(A) = \frac{1}{L}\int_0^L \id_A(D(s))\, ds ,
     \edm
    and still $D(s)= \int_0^s d_0(\alpha)\, d\alpha$. When one can directly work with $\calQ$, this leads to some simplifications 
since the one-dimensional Strichartz inequality \cite{Foschi,HZ06,Strichartz} yields boundedness of $\calQ$ as soon as $\mu$ has 
a density $\psi\in L^2$, see \cite{HLexistence}. Under rather general assumptions on $d_0$, see the discussion in \cite{HLexistence}, 
the measure $\mu$ indeed has a density $\psi$, i.e., $\mu(dr)= \psi(r)\, dr$.  Further, $\psi\in L^2$ is implied by
    $d_0^{-1}\in L^1$([0,L]), see Lemma 1.4 and Lemma B.1 in \cite{HLexistence} for details. However, unlike in the existence proof 
in \cite{HLexistence}, we have to work with $K^1$ and $K^2$ and not directly with $\calQ$  mainly because of 
Theorem \ref{thm:exp-twisted-bound} below and we have to use an `integration by parts' type argument to control the oscillatory 
integrals in the definition of $K^1$ and $K^2$.
    Using the probability measure $\mu(dr)=\psi(r)dr$, the oscillatory integrals \eqref{eq:osc-integ-1}, 
respectively \eqref{eq:osc-integ-2}, in the definition of $K^1$, respectively $K^2$, can be rewritten as
     \beq
      \int e^{-i\frac{a}{4r}} \frac{1}{|r|} \psi(r)\, dr ,
      \text{ respectively }
      \int e^{i r a}  \psi(r)\, dr .
     \eeq
    Thus the necessary oscillatory bounds would follow from suitable smoothness properties of $\psi$. In the model case \eqref{eq:d0-simplest}, which is the case most often studied so far, this density turns out to be given by $\psi=\id_{[0,1]}$, which results in \emph{considerable simplifications} in the necessary oscillatory and multi-linear bounds for $K^1$ and $K^2$ in \cite{EHL}. However, already this model case shows that $\psi$ \emph{must have jump discontinuities} in general.
   Unlike in the existence proof of \cite{HLexistence}, where only $L^p$ properties of $\psi$ were needed, which, in turn, follow from $L^{p-1}$ properties of $d_0^{-1}$, see Lemma 1.4 in \cite{HLexistence}, we found it rather inconvenient to deduce the necessary differentiability properties of $\psi$ from natural conditions on $d_0$. Instead, it turned out to be easier to avoid the above change of variables and work with the oscillatory integrals in \eqref{eq:osc-integ-1} and \eqref{eq:osc-integ-2} and the Definition \ref{def:mult-sub-linear} directly, 
    based on, in particular, Lemma \ref{lem:intbyparts2} and Corollary \ref{cor:small-D}.
\end{rems}

Lastly, as in \cite{EHL}, in the proof of the exponential decay estimates a key ingredient are multi-linear bounds for suitably exponentially twisted versions of $K^1$ and $K^2$ which are \emph{uniform} in the twist. First the definition of the twisted functionals:
\begin{definition}\label{def:Ktwisted}
 Let $F:\R\to[0,\infty)$ be  a symmetric function which obeys the triangle inequality, i.e., $F(-x)= F(x)$ and
 $F(x+y)\le F(x)+F(y)$ for all $x,y\in\R$. The exponentially twisted functionals $K^1_F$ and $K^2_F$ are defined by \beq
  K^j_F(h_1,h_2,h_3,h_4) := K^j(e^{F}h_1,e^{-F}h_2,e^{-F}h_3,e^{-F}h_4)\, \quad j=1,2.
 \eeq
\end{definition}
Since $K^1$ and $K^2$ are highly nonlocal, it seems at first rather surprising that $K^j_F$ is even finite for an unbounded exponential twist $F$, since the factor $e^F$ in the first entry of the definition of $K^j_F$ is potentially \emph{exponentially large}. But exponentially twisted sub--linear functionals $K^j_F$ are not only well-defined but, even more surprisingly, they are bounded by $F=0$, as the following theorem shows, whose proof is astonishingly simple. Together with Proposition \ref{prop:mult-lin-bounds} it yields multi--linear bounds for $K^j_F$ which are \emph{uniform in the exponential twist} $F$, as long as $F$ is even and obeys the triangle inequality.

\begin{thm}\label{thm:exp-twisted-bound}
 Let $F:\R\to[0,\infty)$ be  a symmetric function which obeys the triangle inequality, i.e.,
 $F(x+y)\le F(x)+F(y)$ for all $x,y\in\R$. Then for all $h_l\in L^2(\R)$, $l=1,\ldots,4$,
 \beq
   K^j_F(h_1,h_2,h_3,h_4) \le K^j(h_1,h_2,h_3,h_4),\quad j=1,2.
 \eeq
\end{thm}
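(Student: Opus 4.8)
The plan is to prove the inequality by a pointwise comparison of the integrands of $K^j_F$ and $K^j$ on the support of the momentum constraint $\delta(\eta_1-\eta_2+\eta_3-\eta_4)$. Unfolding Definition~\ref{def:Ktwisted} together with Definition~\ref{def:mult-sub-linear}, and using that $|e^{\pm F(\eta_\nu)}h_\nu(\eta_\nu)| = e^{\pm F(\eta_\nu)}|h_\nu(\eta_\nu)|$ since $e^{\pm F}\ge 0$, one sees that $K^j_F(h_1,h_2,h_3,h_4)$ is given by exactly the same integral as $K^j(h_1,h_2,h_3,h_4)$ except for the extra nonnegative weight
\[
  w(\eta):= e^{\,F(\eta_1)-F(\eta_2)-F(\eta_3)-F(\eta_4)}
\]
multiplying the (already nonnegative) integrand $\tfrac1L\bigl|\int_0^L e^{\mp i(\cdots)}\,ds\bigr|\prod_{\nu=1}^4|h_\nu(\eta_\nu)|$. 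This identity is valid even if both sides equal $+\infty$, so the statement is meaningful for arbitrary measurable $h_\nu$, in particular for $h_\nu\in L^2$.

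The key observation is that $w(\eta)\le 1$ on the hyperplane $\{\eta_1-\eta_2+\eta_3-\eta_4=0\}$ carrying the Dirac measure. Indeed, there $\eta_1=\eta_2-\eta_3+\eta_4$, so subadditivity of $F$ together with the symmetry $F(-\eta_3)=F(\eta_3)$ gives
\[
  F(\eta_1)=F(\eta_2-\eta_3+\eta_4)\le F(\eta_2)+F(-\eta_3)+F(\eta_4)=F(\eta_2)+F(\eta_3)+F(\eta_4),
\]
whence the exponent $F(\eta_1)-F(\eta_2)-F(\eta_3)-F(\eta_4)\le 0$ and thus $w(\eta)\le 1$ on the support of $\delta(\eta_1-\eta_2+\eta_3-\eta_4)$.

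Since the remaining factors in the integrand of $K^j$ are all nonnegative — the absolute value $\tfrac1L|\int_0^L\cdots\,ds|$ of the oscillatory integral, the moduli $|h_\nu(\eta_\nu)|$, and $\delta(\eta_1-\eta_2+\eta_3-\eta_4)\,d\eta$ viewed as a nonnegative measure — multiplying this integrand by $w(\eta)\le 1$ can only decrease (or leave unchanged) the value of the integral. This yields $K^j_F(h_1,h_2,h_3,h_4)\le K^j(h_1,h_2,h_3,h_4)$ for $j=1,2$, with the identical argument in both cases, since only the $\pm$ pattern of the constraint and the positivity of the rest of the integrand are used.

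There is essentially no obstacle here; the only thing to notice is that the asymmetric placement of the twist in Definition~\ref{def:Ktwisted} — $e^{+F}$ on precisely one slot and $e^{-F}$ on the other three — is exactly matched to the sign pattern in the constraint $\eta_1-\eta_2+\eta_3-\eta_4=0$, so that the triangle inequality for $F$ closes the estimate with no room to spare. No smoothness, growth, or integrability assumption on $F$ is needed beyond evenness and subadditivity.
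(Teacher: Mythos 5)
Your proof is correct and follows essentially the same route as the paper: unfold the definitions, observe that on the support of $\delta(\eta_1-\eta_2+\eta_3-\eta_4)$ the symmetry and subadditivity of $F$ give $F(\eta_1)\le F(\eta_2)+F(\eta_3)+F(\eta_4)$, so the extra weight $e^{F(\eta_1)-F(\eta_2)-F(\eta_3)-F(\eta_4)}\le 1$, and positivity of the remaining integrand yields the bound for both $j=1,2$. No gaps.
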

\bpf
 By the definition of $K^1$ one has
 \beq\label{eq:crucial}
  \begin{split}
    K^1_F(&h_1,h_2,h_3,h_4) \\
     &= \int_{\R^4}
       \frac{1}{L}\Big| \int_0^L e^{-i\frac{a(\eta)}{4D(s)}} \frac{ds}{|D(s)|} \Big|
       e^{F(\eta_1)-F(\eta_2)-F(\eta_3)-F(\eta_4)}
       \prod_{j=1}^4 |h_j(\eta_j)|\, \delta(\eta_1-\eta_2+\eta_3-\eta_4) d\eta
  \end{split}
 \eeq
 On the support of the measure $\delta(\eta_1-\eta_2+\eta_3-\eta_4) d\eta $ on $\R^4$ one has
 $ \eta_1-\eta_2+\eta_3-\eta_4 = 0 $ and hence
 \bdm
  F(\eta_1)= F(\eta_2-\eta_3 + \eta_4) \le F(\eta_2) + F(\eta_3) + F(\eta_4)
 \edm
 since $F$ is symmetric and obeys the triangle inequality. In particular, one has
 \bdm
  e^{F(\eta_1)-F(\eta_2)-F(\eta_3)-F(\eta_4)}\le 1
 \edm
 in the integral in \eqref{eq:crucial} and the bound $K^1_F(h_1,h_2,h_3,h_4) \le K^1(h_1,h_2,h_3,h_4)$ follows immediately. The same proof shows $K^2_F(h_1,h_2,h_3,h_4) \le K^2(h_1,h_2,h_3,h_4)$.
\epf

%---------------------------
\section{The Exponential Decay}\label{sec:expdecay}
\subsection{The proof of exponential decay: real space}
%---------------------------
In this section, we prove an $L^2$--version of the exponential decay, namely
\begin{prop} \label{prop:x-space-L2-exp-decay} Assume that $d_0$ is piecewise $W^{1,1}$ and bounded away from zero on $[0,L]$ and that $f\in H^1$ is a solution of \eqref{eq:DMS-weak} for some $\omega>0$ and $d_{\mathrm{av}}> 0$ {\rm (}respectively, $f\in L^2$ is a solution of \eqref{eq:DMS-weak} for $\d_\mathrm{av}=0${\rm )}. Then there exists $\mu>0$ such that the exponentially weighted function $x\mapsto e^{\mu|x|}f(x)$ is still in $L^2(\R)$.
\end{prop}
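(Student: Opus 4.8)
The plan is to run a bootstrap (or ``self-improving'') argument on exponentially weighted truncations of $f$, using the weak equation \eqref{eq:DMS-weak} together with the uniform-in-twist multilinear bounds of Theorem~\ref{thm:exp-twisted-bound} and Proposition~\ref{prop:mult-lin-bounds}. First I would fix a parameter $\mu>0$ and, for $n\in\N$, introduce the truncated weights $F_n(x):=\min(\mu|x|,n)$; these are symmetric, bounded, Lipschitz, and obey the triangle inequality $F_n(x+y)\le F_n(x)+F_n(y)$, so they are legitimate twists in the sense of Definition~\ref{def:Ktwisted}. The boundedness of $F_n$ guarantees $e^{F_n}f\in H^1$ (resp.\ $L^2$), so $g:=e^{2F_n}f$ is an admissible test function in \eqref{eq:DMS-weak}. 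Plugging it in gives
\beq
 -\omega\la e^{2F_n}f,f\ra = d_{\mathrm{av}}\la (e^{2F_n}f)',f'\ra - \calQ(e^{2F_n}f,f,f,f).
\eeq
The left-hand side is $-\omega\|e^{F_n}f\|^2$. In the $d_{\mathrm{av}}>0$ case one expands $(e^{2F_n}f)' = e^{2F_n}f' + 2F_n' e^{2F_n}f$ and, after a standard manipulation (writing $e^{2F_n}f'\bar{f'}$ etc.\ symmetrically, completing squares, and absorbing the $F_n'$ cross term using $|F_n'|\le\mu$), one gets a coercive lower bound of the form $d_{\mathrm{av}}\|(e^{F_n}f)'\|^2 - C\mu^2\|e^{F_n}f\|^2$ for the quadratic part; when $d_{\mathrm{av}}=0$ this step is vacuous and one works purely in $L^2$.

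The heart of the matter is estimating the nonlinear term. By Corollary~\ref{cor:Q-K-bound} and Theorem~\ref{thm:exp-twisted-bound},
\beq
 |\calQ(e^{2F_n}f,f,f,f)| = |\calQ(e^{F_n}(e^{F_n}f), e^{-F_n}(e^{F_n}f), e^{-F_n}(e^{F_n}f)\cdot e^{2F_n}, \ldots )|
\eeq
--- more precisely, writing $h:=e^{F_n}f$ one wants to see $\calQ(e^{2F_n}f,f,f,f)$ as $K^1_{F_n}$ (or $K^2_{F_n}$ after Fourier transform) evaluated at $(h,h,h,h)$ up to the sign conventions, so that the twisted bound gives
\beq
 |\calQ(e^{2F_n}f,f,f,f)| \lesssim K^1(h,h,h,h) \lesssim \|h\|^4 = \|e^{F_n}f\|^4 .
\eeq
This alone is not enough to close the loop, since it only yields $\|h\|^2 \le C\mu^{-2}\|h\|^4$, which is useless without smallness. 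The standard fix (as in \cite{EHL}) is to split $f = f\id_{[-R,R]} + f\id_{\{|x|>R\}}$ for a large radius $R$; the near piece contributes a term controlled by $e^{2\mu R}\|f\|^2$ (harmless, finite, $n$-independent once we also truncate it), and in the far-field product we can arrange that two of the four factors have supports separated by distance $\gtrsim R$, so the localized multilinear bound \eqref{eq:mult-lin-loc} gains a factor $R^{-1/2}$, combined with the twisted bound to give something like $\lesssim R^{-1/2}\|h\|^2\cdot\|h\|^2$ for the dangerous part. Choosing $R$ large (depending on $\omega$, $d_{\mathrm{av}}$, $\|f\|$) and then $\mu$ small makes the coefficient of $\|h\|^4$-type terms small enough to be absorbed.

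Carrying this out yields a bound $\|e^{F_n}f\|^2 \le C$ with $C$ independent of $n$; letting $n\to\infty$ and applying monotone convergence (since $F_n\uparrow \mu|x|$) gives $e^{\mu|x|}f\in L^2(\R)$, which is the claim. The main obstacle I expect is precisely the bookkeeping in the nonlinear estimate: one must (i) correctly identify $\calQ(e^{2F_n}f,f,f,f)$ with a twisted functional $K^j_{F_n}$ acting on four copies of $e^{F_n}f$ --- this uses that $2F_n = F_n - (-F_n) - (-F_n) - (-F_n)$ only \emph{after} inserting the constraint $\eta_1-\eta_2+\eta_3-\eta_4=0$, exactly as in the proof of Theorem~\ref{thm:exp-twisted-bound}, so the twist must be distributed across all four slots rather than piled onto one; and (ii) engineer the support-separation so that \eqref{eq:mult-lin-loc} applies with a quantitatively good $\tau\sim R$, while keeping the near-field remainder uniformly bounded in $n$. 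The coercivity argument for the $d_{\mathrm{av}}>0$ quadratic form is routine but must be done carefully so that the $\mu^2$ loss there is compatible with the smallness extracted from the nonlinearity; taking $\mu$ small at the very end reconciles the two.
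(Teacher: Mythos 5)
Your setup (bounded truncated twists, testing with an exponentially weighted copy of $f$, the IMS-type treatment of the kinetic term, and the identification of $\calQ(e^{2F_n}f,f,f,f)$ with a twisted functional $K^1_{F_n}(h,h,h,h)$, $h=e^{F_n}f$, so that Theorem~\ref{thm:exp-twisted-bound} applies uniformly in the twist) matches the paper's strategy. But the way you propose to close the argument has a genuine gap. The inequality you arrive at has the shape $\|h\|^2 \le C\|h\|^4 + (\text{lower order})$, and you claim the quartic term can be made to have a small coefficient and then be ``absorbed.'' Neither half of this works. First, the purely far-field quartic term $K^1(h_>,h_>,h_>,h_>)\le C\|h_>\|^4$ has \emph{no} support separation between any two of its arguments, so \eqref{eq:mult-lin-loc} gives no gain there: its coefficient is an order-one constant no matter how $R$ and $\mu$ are chosen; the $R^{-1/2}$ (resp.\ $\tau^{-1/2}$) gain and the smallness from the annular piece only control the \emph{mixed} terms containing a factor supported near the origin. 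Second, even if the quartic coefficient were small, an inequality of the form $x^2\le \eps x^4 + C$ does not bound $x$: it is satisfied by all sufficiently large $x$, so nothing can be ``absorbed'' into the quadratic left-hand side without an a priori bound.

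What is missing is precisely the paper's ``two punch'' continuity argument. One must keep the spatial cutoff attached to the weighted quantity (the paper tracks $\|f\|_{\tau,\veps}=\|e^{F_{\tau^{-1},\veps}}\chi_\tau f\|$, not $\|e^{F}f\|$), so that the resulting inequality $G(\|f\|_{\tau,\veps})\le c_f(\tau)$, with $G(\nu)=\nu^2/2-C(\nu^3+\nu^4)-\delta\nu$, traps the norm in $[0,\nu_1]\cup[\nu_2,\infty)$ \emph{uniformly in the regularization parameter}; then continuity of the norm in that parameter, together with a limiting regime in which the weight disappears and the value $\|\chi_\tau f\|$ is below $\nu_1$ (this is where the cutoff away from the origin, with $\tau$ large, is essential — the unweighted starting value $\|f\|$ of your quantity $\|e^{F_n}f\|$ is \emph{not} small), forces the norm onto the small branch for every value of the parameter; monotone convergence then removes the regularization. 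Your truncations $F_n=\min(\mu|x|,n)$ could in principle play the role of the paper's $\veps$-regularization (with $n=0$ as the weight-free starting point and continuity in $n$), but only if you track the far-field quantity $\|e^{F_n}\chi_\tau f\|$, make the trapping inequality uniform in $n$, couple $\mu\sim\tau^{-1}$ so the near-field factors $e^{\mu\tau}$ stay bounded, and run the intermediate-value argument in $n$. As written, the claim that a uniform bound $\|e^{F_n}f\|^2\le C$ follows by absorption is not justified, and this is the heart of the proof rather than bookkeeping.
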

To show that dispersion management solitons have exponential decay we start with the weak form of the dispersion management equation \eqref{eq:DMS-weak}. Let $\xi\in \calC_b^\infty(\R)$ be a real-valued  bounded infinitely often differentiable function whose derivative is bounded. Choosing $g=\xi^2 f$ in \eqref{eq:DMS-weak} one sees
 \beq
  -\omega \|\xi f\|^2 = -\omega \la \xi^2 f,f \ra
   = d_{\text{av}}\la (\xi^2 f)',f' \ra - \calQ(\xi^2 f, f,f,f)
 \eeq
Since $-\omega \|\xi f\|^2$ is real we can take the real part above to get
 \beq\label{eq:bound1}
  -\omega \|\xi f\|^2
    = d_{\text{av}}\re\la (\xi^2 f)',f' \ra -  \re\calQ(\xi^2f,f,f,f)
 \eeq
We deal with the real part of the kinetic energy term with the help of the following simple

\begin{lem}
 Let $\xi$ be a real-valued bounded smooth function whose derivative is bounded and $f\in H^1(\R)$, then $\xi f,\xi^2 f\in H^1(\R)$ and
   \beq\label{eq:IMS}
     \re\la (\xi^2 f)',f' \ra = \la (\xi f)',(\xi f)' \ra - \la f, |\xi'|^2 f \ra
   \eeq
\end{lem}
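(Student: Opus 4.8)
The plan is to verify the identity \eqref{eq:IMS} by a direct computation, treating the $H^1$ membership claim first so that all the products below make sense. Since $\xi$ is bounded with bounded derivative and $f\in H^1(\R)$, the product rule (Leibniz rule for weak derivatives) gives $(\xi f)' = \xi' f + \xi f'$ and $(\xi^2 f)' = 2\xi\xi' f + \xi^2 f'$, both of which lie in $L^2(\R)$ because $\xi,\xi'$ are bounded and $f,f'\in L^2(\R)$; hence $\xi f,\xi^2 f\in H^1(\R)$. This disposes of the first assertion.

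For the identity itself, I would expand both sides. On the right-hand side, using $(\xi f)' = \xi' f + \xi f'$,
\[
 \la (\xi f)',(\xi f)'\ra
  = \la \xi' f + \xi f', \xi' f + \xi f'\ra
  = \la \xi' f, \xi' f\ra + \la \xi' f, \xi f'\ra + \la \xi f', \xi' f\ra + \la \xi f', \xi f'\ra .
\]
Since $\xi,\xi'$ are real-valued, $\la \xi' f, \xi f'\ra + \la \xi f', \xi' f\ra = \la \xi\xi' f, f'\ra + \la f', \xi\xi' f\ra = 2\re\la \xi\xi' f, f'\ra$, and $\la \xi' f,\xi' f\ra = \la f, |\xi'|^2 f\ra$, while $\la \xi f',\xi f'\ra = \la \xi^2 f', f'\ra$. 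Subtracting $\la f, |\xi'|^2 f\ra$ then gives
\[
 \la (\xi f)',(\xi f)'\ra - \la f, |\xi'|^2 f\ra
  = \la \xi^2 f', f'\ra + 2\re\la \xi\xi' f, f'\ra .
\]
On the left-hand side, using $(\xi^2 f)' = 2\xi\xi' f + \xi^2 f'$ and again that $\xi\xi'$ is real,
\[
 \re\la (\xi^2 f)', f'\ra = \re\big( \la 2\xi\xi' f, f'\ra + \la \xi^2 f', f'\ra \big)
  = 2\re\la \xi\xi' f, f'\ra + \la \xi^2 f', f'\ra ,
\]
where $\la \xi^2 f', f'\ra = \int \xi^2 |f'|^2$ is already real. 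Comparing the two displays yields \eqref{eq:IMS}.

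The computation is entirely routine; the only point requiring any care is the justification of the Leibniz rule for weak derivatives when one factor is merely bounded smooth and the other merely $H^1$, which is standard (approximate $f$ by Schwartz functions in $H^1$, use that multiplication by $\xi$ and by $\xi'$ is bounded on $L^2$, and pass to the limit), and the observation that $\xi$ being real-valued is exactly what makes $\la \xi\xi' f,f'\ra + \la f',\xi\xi' f\ra = 2\re\la\xi\xi' f,f'\ra$ and $|\xi'|^2 = (\xi')^2$. There is no real obstacle here; this is the discrete analogue of the IMS localization formula, and the name attached to \eqref{eq:IMS} in the text reflects that.
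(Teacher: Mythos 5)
Your proof is correct, and it takes a genuinely different route from the paper's. You verify \eqref{eq:IMS} by brute force: apply the Leibniz rule for weak derivatives to get $(\xi f)'=\xi' f+\xi f'$ and $(\xi^2 f)'=2\xi\xi' f+\xi^2 f'$, expand both sides, and match terms, using the realness of $\xi$ only to combine the cross terms into $2\re\la \xi\xi' f,f'\ra$. The paper instead argues abstractly with the operator $H=-\partial_x^2$: it reduces to $f\in\calC_0^\infty(\R)$ by density, writes $\re\la \xi^2 f,Hf\ra = \la \xi f, H\xi f\ra - \tfrac12\la f,[[H,\xi],\xi]f\ra$ via the double-commutator identity $[[H,\xi],\xi]=H\xi^2-2\xi H\xi+\xi^2 H$, and then computes $[[H,\xi],\xi]=2|\xi'|^2$ for this specific $H$. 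Your expansion is more elementary and self-contained (no density reduction is really needed once the product rule is justified, which you do), while the paper's commutator argument exposes the structural reason the identity holds — it is the IMS localization formula, valid verbatim for much more general operators and in higher dimensions, which is exactly the generality the authors allude to. One cosmetic point: the identity is a one-dimensional continuous version of the IMS formula, not a ``discrete analogue'' as you call it at the end; this does not affect the argument.
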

\begin{proof}
 This is, of course, nothing but a one-dimensional version of the well known IMS localization formula for the kinetic energy, see \cite{CFKS}, and it holds in much greater generality. We give a short proof for the convenience of the reader.

Due to the smoothness and boundedness assumptions on $\xi$ it is clear that $\xi f$ and $\xi^2f$ are in $H^1(\R)$. By density, it is also enough to prove \eqref{eq:IMS} for $f\in\calC^\infty_0(\R)$. Let $H$ be an operator, in our case it will be given by $H=-\partial_x^2$, and assume that $\calC^\infty_0(\R)$ is in the domain of $H$. Then
 \beq
   [H,\xi] = H\xi - \xi H
 \eeq
and the iterated commutator is given by
 \beq\label{eq:double-commutator1}
   [[H,\xi],\xi] = H\xi^2 - 2\xi H\xi + \xi^2 H.
 \eeq
Hence
 \beq\label{eq:double-commutator2}
  \begin{split}
   \re \la \xi^2 f, H f\ra &= \frac{1}{2} \la f, (\xi^2H +H\xi^2) f \ra
     = \la f, \xi H\xi f\ra - \frac{1}{2} \la f,  [[H,\xi],\xi]  f \ra \\
     & = \la \xi f,  H\xi f\ra - \frac{1}{2} \la f,  [[H,\xi],\xi]  f \ra
  \end{split}
 \eeq
If $H=-\partial_x^2$, then one calculates $[[H,\xi],\xi]= 2|\xi'|^2$ and thus
\eqref{eq:double-commutator2} yields
 \beq
  \begin{split}
   \re \la (\xi^2 f)', f'\ra &= \re \la \xi^2 f, -\partial_x^2 f\ra
     = \la \xi f, -\partial_x^2(\xi f)\ra -  \la f,  |\xi'|^2 f \ra \\
     & = \la (\xi f)', (\xi f)'\ra -  \la f,  |\xi'|^2 f \ra ,
  \end{split}
 \eeq
at least for $f\in\calC^\infty_0(\R)$ and then by density for all $f\in H^1(\R)$.
\end{proof}

 Using \eqref{eq:IMS} in \eqref{eq:bound1}, dropping the non-negative term
$d_{\text{av}}\la (\xi f)',(\xi f)'\ra $, bounding $\re\calQ$ by $|\calQ|$, and using Corollary \ref{cor:Q-K-bound}, one gets
 \beq\label{eq:bound2}
    \|\xi f\|^2 \le
    \omega^{-1}(d_{\text{av}} \la f, |\xi'|^2f \ra + |\calQ(\xi^2f,f,f,f)| )
    \lesssim \la f, |\xi'|^2f \ra + K^1(\xi^2f,f,f,f)
 \eeq
where the implicit constant depends only on $\omega, d_{\text{av}}$, and $L$.

No we come to the choice of $\xi$:  Choose a smooth  function $\chi$ with $0\le \chi\le 1$, $\chi(x)=0$ for $|x|\le 3/2$\footnote{That $\chi=0$ on $(-3/2,3/2)$ is mainly for convenience in the proof of Lemma \ref{lem:localization-error2}.}, $\chi(x)=1$ for $|x|\ge 2$, and for $\tau>0$ put $\chi_\tau(x):= \chi(x/\tau)$.
Furthermore, for $\veps,\mu>0$ set
  \beq\label{eq:F}
    F(x)=F_{\mu,\veps}(x)= \frac{\mu|x|}{1+\veps|x|}.
  \eeq
Then we put $\xi:= e^{F_{\mu,\veps}}\chi_\tau $, i.e., $\xi$ is an exponentially
weighted cut--off function which is smooth, since $F_{\mu,\veps}$ is smooth on the support of $\chi_\tau$.

\begin{lem}\label{lem:localization-error1} With $\xi= e^{F_{\mu,\veps}}\chi_\tau$ and the choice $\mu=\tau^{-1}$ one has
  \bdm
    \la f, |\xi'|^2 f\ra \lesssim \frac{1}{\tau^2}\big( \|f\|^2 + \| e^{F_{\mu,\veps}} \chi_\tau f\|^2 \big)
  \edm
 uniformly in $\tau,\veps>0$, where the implicit constant depends only on $\|\chi'\|_\infty$.
\end{lem}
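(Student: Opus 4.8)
The statement is a pointwise/operator estimate on the multiplication operator $|\xi'|^2$, so I would begin by computing $\xi'$ explicitly. With $\xi = e^{F_{\mu,\veps}}\chi_\tau$ the product rule gives
\[
  \xi' = e^{F_{\mu,\veps}}\big( F_{\mu,\veps}'\,\chi_\tau + \chi_\tau' \big),
\]
so that
\[
  |\xi'|^2 \le 2 e^{2F_{\mu,\veps}}\big( |F_{\mu,\veps}'|^2 \chi_\tau^2 + |\chi_\tau'|^2 \big).
\]
The first summand is controlled by noting that $F_{\mu,\veps}(x) = \mu|x|/(1+\veps|x|)$ has derivative $|F_{\mu,\veps}'(x)| = \mu/(1+\veps|x|)^2 \le \mu$ for $x\neq 0$; with the choice $\mu = \tau^{-1}$ this is $\le 1/\tau$, so $|F_{\mu,\veps}'|^2\chi_\tau^2 \le \tau^{-2}\chi_\tau^2$ and hence $e^{2F_{\mu,\veps}}|F_{\mu,\veps}'|^2\chi_\tau^2 \le \tau^{-2} (e^{F_{\mu,\veps}}\chi_\tau)^2$. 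Pairing against $f$ and recognizing $\| e^{F_{\mu,\veps}}\chi_\tau f\|^2$ handles this term, uniformly in $\veps,\tau>0$.

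For the second summand, $\chi_\tau'(x) = \tau^{-1}\chi'(x/\tau)$ is supported in $\{3\tau/2 \le |x| \le 2\tau\}$ and bounded by $\tau^{-1}\|\chi'\|_\infty$. On that support $F_{\mu,\veps}(x) = \mu|x|/(1+\veps|x|) \le \mu|x| \le 2\mu\tau = 2$ (using $\mu=\tau^{-1}$), so $e^{2F_{\mu,\veps}} \le e^4$ there, a constant. Therefore $e^{2F_{\mu,\veps}}|\chi_\tau'|^2 \lesssim \tau^{-2}\|\chi'\|_\infty^2$, which when integrated against $|f|^2$ yields $\lesssim \tau^{-2}\|f\|^2$. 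Adding the two contributions gives
\[
  \la f, |\xi'|^2 f\ra \le \int |\xi'(x)|^2 |f(x)|^2\,dx \lesssim \frac{1}{\tau^2}\big( \| e^{F_{\mu,\veps}}\chi_\tau f\|^2 + \|f\|^2 \big),
\]
with implicit constant depending only on $\|\chi'\|_\infty$, as claimed.

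This lemma is essentially bookkeeping: there is no real obstacle, only the need to be careful that every bound is uniform in both parameters $\tau$ and $\veps$. The one point worth stating cleanly is why $e^{2F_{\mu,\veps}}$ is harmless precisely where it is not already absorbed into the good term $\| e^{F_{\mu,\veps}}\chi_\tau f\|^2$: on $\supp\chi_\tau'$ the weight $F_{\mu,\veps}$ is bounded by the absolute constant $2$, independently of $\veps$, thanks to the cutoff localizing to $|x|\lesssim\tau$ together with the choice $\mu\tau = 1$. Everything else is the product rule and the two elementary pointwise bounds $|F_{\mu,\veps}'|\le\mu$ and $|\chi_\tau'|\le\tau^{-1}\|\chi'\|_\infty$.
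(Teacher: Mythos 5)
Your proof is correct and follows essentially the same route as the paper: product rule, the pointwise bounds $|F_{\mu,\veps}'|\le\mu$ and $|\chi_\tau'|\le\tau^{-1}\|\chi'\|_\infty$, and the observation that on $\supp\chi_\tau'\subset\{|x|\le 2\tau\}$ one has $e^{2F_{\mu,\veps}}\le e^{4\mu\tau}=e^{4}$ when $\mu=\tau^{-1}$. No gaps; the constants and uniformity in $\veps,\tau$ come out exactly as in the paper's argument.
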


\bpf
Recall that $F=F_{\mu,\veps}$ is given by \eqref{eq:F}, $\xi= e^{F}\chi_\tau$ where $\chi$ is a smooth cut--off function with $0\le \chi\le 1$, $\chi(x)=0$ for $|x|\le 3/2$, $\chi(x)=1$ for $|x|\ge 2$, and
$\chi_\tau(x)= \chi(x/\tau)$, and $\xi=e^F\chi_\tau$.
 In view of $|F'(x)|= |F_{\mu,\veps}'(x)|= \mu(1+\veps|x|)^{-2}\le \mu$ for all $\veps>0$ and $x\not=0$ and $(\chi_\tau)'=\tau^{-1}(\chi')_\tau$,
 \bdm
  \xi' =(e^F\chi_\tau)' \le \mu e^F \chi_\tau + \frac{1}{\tau} e^F (\chi')_\tau .
 \edm
Thus
 \bdm
  |\xi'|^2 \le 2 e^{2F}(\mu^2 \chi_\tau^2 + \tau^{-2}(\chi')_\tau^2) .
 \edm
Moreover, because $\chi'(x)=0$ if $|x|\ge 2$, we have $(\chi')_\tau(x)=0$ for $|x|\ge 2\tau$ and thus
 \bdm
  e^{2F}(\chi')_\tau^2 \le e^{2F(2\tau)}\|\chi'\|_\infty^2 \le e^{4\mu\tau} \|\chi'\|_\infty^2 .
 \edm
In particular,
 \bdm
 \begin{split}
  \la f, |\xi'|^2 f\ra
   &
    \le
     2 \big( \mu^2 \|e^{F}\chi_\tau f\|^2 + e^{4\mu\tau}\tau^{-2} \|\chi'\|_\infty^2\|f\|^2 \big) 
    \le
     \frac{2\max(1, e^4\|\chi'\|_\infty^2)}{\tau^2}
     \big( \|e^{F}\chi_\tau f\|^2 + \|f\|^2 \big)
 \end{split}
 \edm
 for $\mu=\tau^{-1}$, which proves Lemma \ref{lem:localization-error1}.
\epf

\begin{lem}\label{lem:localization-error2} With the choice $\mu=\tau^{-1}$ one has
  \bdm
    K^1(e^{2F_{\mu,\veps}}\chi_\tau^2f,f,f,f)
     \lesssim
       \| e^{F_{\mu,\veps}}\chi_\tau f\|^4
       + \|e^{F_{\mu,\veps}}\chi_\tau f\|^3
       + c_f(\tau) \|e^{F_{\mu,\veps}}\chi_\tau f\|^2
       + c_f(\tau) \|e^{F_{\mu,\veps}}\chi_\tau f\|
  \edm
where the implicit constant depends only on $\|f\|$ and $c_f(\tau)$ denotes terms which, for fixed $f\in L^2$, go to zero as $\tau\to\infty$ uniformly in $\veps>0$.
\end{lem}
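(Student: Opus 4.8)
The plan is to split $f$ into a near part and a far part, exploit the multi-sub-linearity of $K^1$ in each of its arguments, and then reduce every resulting term to the untwisted case by combining Theorem~\ref{thm:exp-twisted-bound} with the $L^2$ bound \eqref{eq:mult-lin} of Proposition~\ref{prop:mult-lin-bounds}. Throughout abbreviate $F:=F_{\mu,\veps}$ and $g:=e^{F}\chi_\tau f$, so that the right-hand side of the asserted inequality is built from powers of $\|g\|$. Decompose $f=f_{<}+f_{>}$ with $f_{>}:=\chi_\tau f$ and $f_{<}:=(1-\chi_\tau)f$. By construction $e^{F}f_{>}=e^{F}\chi_\tau f=g$, whereas $f_{<}$ is supported in $\{|x|\le 2\tau\}$ since $\chi_\tau\equiv 1$ for $|x|\ge 2\tau$; hence, with the choice $\mu=\tau^{-1}$, one has $F(x)=\mu|x|/(1+\veps|x|)\le\mu|x|\le 2$ on $\supp f_{<}$, so $\|e^{F}f_{<}\|\le e^{2}\|(1-\chi_\tau)f\|=:e^{2}\theta(\tau)$, and $\theta(\tau)\to 0$ as $\tau\to\infty$ by dominated convergence, uniformly in $\veps>0$ because $\theta(\tau)$ does not depend on $\veps$.

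First note that $F$ is even and obeys the triangle inequality: writing $\phi(t):=\mu t/(1+\veps t)$ for $t\ge 0$, $\phi$ is increasing and concave with $\phi(0)=0$, hence subadditive, so $F(x+y)=\phi(|x+y|)\le\phi(|x|+|y|)\le\phi(|x|)+\phi(|y|)=F(x)+F(y)$; thus Theorem~\ref{thm:exp-twisted-bound} applies to this $F$. Inserting $f=f_{<}+f_{>}$ into the last three slots and using that $K^1$ is sublinear in each argument,
\[
 K^1(e^{2F}\chi_\tau^2 f,f,f,f)\le\sum_{a_2,a_3,a_4\in\{<,>\}} K^1\big(e^{2F}\chi_\tau^2 f,\,f_{a_2},\,f_{a_3},\,f_{a_4}\big),
\]
a sum of eight terms. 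In each term write $e^{2F}\chi_\tau^2 f=e^{F}(\chi_\tau g)$ and $f_{a_j}=e^{-F}h_j$ with $h_j:=e^{F}f_{a_j}$, so that $h_j=g$ when $a_j={>}$ and $\|h_j\|\le e^{2}\theta(\tau)$ when $a_j={<}$. By Definition~\ref{def:Ktwisted} the term equals $K^1_F(\chi_\tau g,h_2,h_3,h_4)$, which by Theorem~\ref{thm:exp-twisted-bound} is at most $K^1(\chi_\tau g,h_2,h_3,h_4)$, and then by \eqref{eq:mult-lin} it is $\lesssim\|\chi_\tau g\|\,\|h_2\|\,\|h_3\|\,\|h_4\|\le\|g\|\prod_{j=2}^{4}\|h_j\|$, using $\|\chi_\tau g\|\le\|g\|$.

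It remains to collect the terms according to the number $k\in\{0,1,2,3\}$ of indices among $a_2,a_3,a_4$ equal to ${<}$. The single $k=0$ term is $\lesssim\|g\|^4$; each of the three $k=1$ terms is $\lesssim\theta(\tau)\|g\|^3\le\|f\|\,\|g\|^3$; each of the three $k=2$ terms is $\lesssim\theta(\tau)^2\|g\|^2$; and the single $k=3$ term is $\lesssim\theta(\tau)^3\|g\|$. Setting $c_f(\tau):=\theta(\tau)^2+\theta(\tau)^3$, which tends to $0$ as $\tau\to\infty$ uniformly in $\veps$, and absorbing the constant $\|f\|$ into the implicit constant yields exactly the claimed estimate, with an implicit constant depending only on $\|f\|$ and $d_0$. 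The only mildly delicate points are the verification that $F$ obeys the triangle inequality, so that Theorem~\ref{thm:exp-twisted-bound} is applicable, and the bookkeeping that separates the clean $\|g\|^4$ and $\|g\|^3$ contributions from those carrying a vanishing factor $\theta(\tau)$; the choice $\mu=\tau^{-1}$ is precisely what keeps $e^{F}$ bounded by $e^2$ on $\supp f_{<}$.
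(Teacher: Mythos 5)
There is a genuine gap, and it sits exactly at the point where the lemma earns its $c_f(\tau)$ factors. Your overall architecture is fine (writing the first slot as $e^{F}(\chi_\tau g)$, pulling out the twist via Definition \ref{def:Ktwisted} and Theorem \ref{thm:exp-twisted-bound}, then invoking \eqref{eq:mult-lin}), and your verification that $F_{\mu,\veps}$ is subadditive is correct. The error is the claim that $\theta(\tau):=\|(1-\chi_\tau)f\|\to 0$ as $\tau\to\infty$. With the paper's cut--off, $\chi_\tau$ vanishes on $\{|x|\le 3\tau/2\}$ and equals $1$ on $\{|x|\ge 2\tau\}$, so $1-\chi_\tau\to 1$ pointwise as $\tau\to\infty$ and dominated convergence gives $\theta(\tau)\to\|f\|$, not $0$; $(1-\chi_\tau)f$ is the \emph{near} part of $f$, not a tail. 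Consequently your $k=2$ and $k=3$ terms are only bounded by $C\|f\|^2\|g\|^2$ and $C\|f\|^3\|g\|$, i.e.\ by fixed constants times $\|g\|^2$ and $\|g\|$, which is strictly weaker than the asserted $c_f(\tau)\,(\|g\|^2+\|g\|)$. This weakening is fatal downstream: in \eqref{eq:bound3}--\eqref{eq:punchline} the coefficient of $\|f\|_{\tau,\veps}^2$ must be $1-c_f(\tau)$ and the linear term must carry a small coefficient for the function $G$ to retain a strictly positive maximum; an $O(1)$ coefficient from the quadratic term could wipe out the left-hand side entirely.

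The missing ideas are the two refinements the paper uses precisely for these terms. One splits the near part further, $h_< = h_\ll + h_\sim$ with $h_\ll=\id_{[-\tau,\tau]}h$ and $h_\sim=(1-\chi_\tau)\id_{[-\tau,\tau]^c}h$. For the annular piece, $\|h_\sim\|\le e^{2\mu\tau}\|f_\sim\|$ and $\|f_\sim\|\le\|\id_{\{|x|\ge\tau\}}f\|\to 0$ is a genuine tail of $f$, which supplies one source of smallness. For the inner piece, $\supp h_>\subset\{|x|\ge 3\tau/2\}$ and $\supp h_\ll\subset[-\tau,\tau]$ are separated by $\tau/2$, so the support-separation estimate \eqref{eq:mult-lin-loc} of Proposition \ref{prop:mult-lin-bounds} (which your argument never invokes) gains the factor $\tau^{-1/2}$; note that in every $k\ge 2$ term at least one copy of $h_<$ can be paired against $h_>$ in this way. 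Combining the two mechanisms gives $c_f(\tau)=\|f_\sim\|+\tau^{-1/2}$, which is what the lemma and the subsequent continuity argument require; without them the estimate you obtain does not prove the statement.
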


\bpf
We will continue to abbreviate $F_{\mu,\veps}$, given by \eqref{eq:F}, by $F$. Note that $\chi_\tau\le 1$, so by monotonicity and Definition \ref{def:mult-sub-linear} of $K^1$,
 \bdm
   K^1(e^{2F}\chi_\tau^2f,f,f,f)\le K^1(e^{2F}\chi_\tau f,f,f,f) .
 \edm
Define $h:= e^F f$ and $h_> := e^{F}\chi_\tau f = \chi_\tau h$ so that $h_>$ is supported on $\{x\in\R\vert\, |x|\ge 3\tau/2\}$. Then, using the twisted functional $K^1_F$ given in \eqref{def:Ktwisted}, one sees
 \bdm
 \begin{split}
  K^1(e^{2F}\chi_\tau f,f,f,f)
   &
    =
    K^1(e^{F}h_>,e^{-F}h,e^{-F}h,e^{-F}h) 
    =
    K^1_F(h_>,h,h,h)
   \le
    K^1 (h_>,h,h,h)
 \end{split}
 \edm
where we also used Theorem \ref{thm:exp-twisted-bound}, which applies since $F$ given by \eqref{eq:F} obeys the triangle inequality.
Thus, in order to prove Lemma \ref{lem:localization-error2}, we have to show that for $\mu=\tau^{-1}$ the bound
 \beq\label{eq:K1-main-bound}
    K^1(h_>,h,h,h)
     \le
       C (\|h_>\|^4 + \|h_>\|^3)
       + c_f(\tau) (\|h_>\|^2 + \|h_>\| )
  \eeq
holds for some constant $C$ which depends only on $\|f\|$ and where $c_f(\tau)$ denotes a term which, for fixed $f\in L^2$, goes to zero as $\tau\to\infty$ uniformly in $\veps>0$.

Continuing with the proof of \eqref{eq:K1-main-bound} recall that $h_<= h- h_> = (1-\chi_\tau)h$. Using sub-linearity of $K^1$ one sees
 \beq\label{eq:sublinear}
  \begin{split}
  K^1( h_>, h,h,h)
  &
   \le
    K^1(h_>,h_>,h_>,h_>) + K^1(h_>,h_>,h_>,h_<) \\
  & \phantom{\le ~}
    + K^1(h_>,h_>,h_<,h_<)
    +  K^1(h_>,h_<,h_<,h_<) \\
  &
    \phantom{\le~}
    + \text{cyclic permutations of the last } 3 \text{ entries of } K^1.
  \end{split}
 \eeq
For the first term in \eqref{eq:sublinear} Proposition \ref{prop:mult-lin-bounds} yields
 \beq\label{eq:0smallh}
   K^1(h_>,h_>,h_>,h_>)  \le C \|h_>\|^4  \eeq
for some constant $C$. Also
 \bdm
   K^1(h_>,h_>,h_>,h_<)  \le C \|h_>\|^3\|h_<\|.
 \edm
Put $f_<= (1-\chi_\tau)f$ and note that since $f_<$ supported on the set $[-2\tau,2\tau]$ and $F$ is increasing, we have
 \beq\label{eq:local-h-bound}
   \|h_<\| = \|e^Ff_<\|\le e^{F(2\tau)}\|f_<\| \le e^{2\mu\tau}\|f\|
 \eeq
since $F(2\tau)= F_{\mu,\veps}(2\tau)\le 2\mu\tau$ uniformly in $\veps>0$. Thus
 \beq\label{eq:1smallh}
   K^1(h_>,h_>,h_>,h_<)  \le C e^{2\mu\tau} \|f\| \|h_>\|^3 = Ce^2 \|f\| \|h_>\|^3
 \eeq
when $\mu=\tau^{-1}$ and the same bound holds for cyclic permutations of the last 3 entries in $K^1$.

To bound the third and fourth terms in \eqref{eq:sublinear} let $\id_A$ be the characteristic function of a set $A\subset\R$ and define
 \begin{align*}
   h_\ll &:= \id_{[-\tau,\tau]} h \\
   h_\sim & := h- h_>-h_<= (1-\chi_\tau)h- h_< = (1-\chi_\tau)\id_{[-\tau,\tau]^c} h
 \end{align*}
and similarly for $f_\ll$ and $f_\sim$. Since $f_\ll$ is supported on $[-\tau,\tau]$ and $f_\sim$ is supported on the `annulus' $[-2\tau,2\tau]\setminus [-\tau,\tau]$, we have in addition to \eqref{eq:local-h-bound} the bounds
 \beq\label{eq:more-local-h-bounds}
  \|h_\ll\|\le e^{\mu\tau}\|f\| \text{ and } \|h_\sim\|\le e^{2\mu\tau} \|f_\sim\|.
 \eeq
 Note
 \begin{align*}
  K^1(h_>,h_>,h_<,h_<)
  &
   = K^1(h_>,h_>,h_\sim+h_\ll,h_<)
   \le
     K^1(h_>,h_>,h_\sim,h_<) + K^1(h_>,h_>,h_\ll,h_<).
 \end{align*}
By \eqref{eq:mult-lin} together with \eqref{eq:local-h-bound} and \eqref{eq:more-local-h-bounds} we have
 \bdm
  K^1(h_>,h_>,h_\sim,h_<)\le C \|h_>\|^2 \|h_\sim\|\|h_<\| \le C e^{4\mu\tau} \|f_\sim\| \|f\| \|h_>\|^2 .
 \edm
Since the supports of $h_>$ and $h_\ll$ are separated by at least $\tau/2$, the bound \eqref{eq:mult-lin-loc} yields
 \bdm
 K^1(h_>,h_>,h_\ll,h_<) \le \frac{C}{\sqrt{\tau}} \|h_>\|^2 \|h_\ll\|\|h_<\|
 \le
  \frac{Ce^{3\mu\tau}}{\sqrt{\tau}} \|f\|^2 \|h_>\|^2
 \edm
where we also used \eqref{eq:local-h-bound} and \eqref{eq:more-local-h-bounds}. Together this gives
 \bdm
  \begin{split}
  K^1(h_>,h_>,h_<,h_<)
  &
   \le C\|f\|\big(e^{4\mu\tau}\|f_\sim\| + \frac{e^{3\mu\tau}}{\sqrt{\tau}} \|f\|\big)\|h_>\|^2
   \lesssim \big(\|f_\sim\| + \tau^{-1/2} \big)\|h_>\|^2
  \end{split}
 \edm
for $\mu=\tau^{-1}$. Since $f\in L^2$, $\|f_\sim\|$ goes to zero as $\tau\to\infty$. Thus, with $c_f(\tau):= \|f_\sim\| + \tau^{-1/2} $, the above
shows
 \beq\label{eq:2smallh}
  K^1(h_>,h_>,h_<,h_<)\le c_f(\tau) \|h_>\|^2
 \eeq
where, for fixed $f\in L^2$, the term $c_f(\tau)$ goes to zero as $\tau\to\infty$ \emph{uniformly } in $\veps>0$ and the same bound holds for cyclic permutations of the last 3 entries in $K^1$. 

For $K^1(h_>,h_<,h_<,h_<)$ note that an argument virtually identical to the one for bounding $K^1(h_>,h_>,h_<,h_<)$ yields
 \bdm\label{eq:3smallh}
  \begin{split}
   K^1(h_>,h_<,h_<,h_<)
   &
    \le K^1(h_>,h_\sim,h_<,h_<)+K^1(h_>,h_\ll,h_<,h_<) \\
   &
    \le
     C\|h_>\|\|h_\sim\| \|h_<\|^2 + \frac{C}{\sqrt{\tau}} \|h_>\|\|h_\ll\|\|h_<\|^2   \\
   &
    \le
    C\|f\|^2 \big( \|f_\sim\| e^{6\mu\tau}+ \tau^{-1/2}e^{5\mu\tau}  \|f\| \big) \|h_>\|
    \lesssim
    c_f(\tau) \|h_>\|
  \end{split}
 \edm
for $\mu=\tau^{-1}$, where again, for fixed $f\in L^2$, the term $c_f(\tau)$ goes to zero as $\tau\to\infty$ \emph{uniformly } in $\veps>0$.

Plugging \eqref{eq:0smallh}, \eqref{eq:1smallh}, \eqref{eq:2smallh}, and \eqref{eq:3smallh} into \eqref{eq:sublinear} shows \eqref{eq:K1-main-bound} and hence the Lemma.
\epf

\begin{rem}
 While at first it seems surprising that one can bound $K^1(h_>,h,h,h)$ solely in terms of $\|h_>\|$, using sublinearity one can easily see that $K^1(h_>,h,h,h)$  can be bounded in terms of $\|h_>\|$ \emph{together with} $\|h_<\|$ where 
 $h_<= h- h_> = (1-\chi_\tau)h$. Since $h_<$ has support in $[-2\tau,2\tau]$, one has $\|h_<\|\le e^{2\mu\tau}\|f_<\|$ and this is bounded by $\|f\|$ if one chooses $\mu=\tau^{-1}$. This is why we have to make this choice for $\mu$. More refined arguments are needed to see that certain terms in the bound go to zero as $\tau\to\infty$, as seen in the arguments above.
\end{rem}

These lemmata, together with \eqref{eq:bound2}, yield the $L^2$--exponential decay of $f$: 
\begin{proof}[Proof of Proposition \ref{prop:x-space-L2-exp-decay}]
Using Lemma \ref{lem:localization-error1} and \ref{lem:localization-error2} on the right hand side of \eqref{eq:bound2}, we get
 \bdm
   \|e^{F_{\mu,\veps}}\chi_\tau f\|^2
     \le C \big(\|e^{F_{\mu,\veps}}\chi_\tau f\|^4 + \|e^{F_{\mu,\veps}}\chi_\tau f\|^3 \big)
         + c_f(\tau) \big( \|e^{F_{\mu,\veps}}\chi_\tau f\|^2 + \|e^{F_{\mu,\veps}}\chi_\tau f\| \big)
         + c_f(\tau)
 \edm
where we always put $\mu=\tau^{-1}$ and $c_f(\tau)$ denotes terms which, for fixed $f\in L^2$, go to zero as $\tau\to\infty$ uniformly in $\veps>0$. Defining $\|f\|_{\tau,\veps}:= \|e^{F_{\tau^{-1},\veps}}\chi_\tau f\|$ we can rewrite the above bound as
 \beq\label{eq:bound3}
  (1-c_f(\tau))\|f\|_{\tau,\veps}^2
  -C \big(\|f\|_{\tau,\veps}^3 + \|f\|_{\tau,\veps}^4 \big)
  - c_f(\tau) \|f\|_{\tau,\veps}
  \le c_f(\tau)
 \eeq

Consider the function $\wti{G}:[0,\infty)\to \R$, given by $\wti{G}(\nu):= \nu^2/2 -C(\nu^3+\nu^4)$.  It is clear that there is a $\tilde{\nu}>0$ such that $\wti{G}(\nu)$ is positive for small $0<\nu<\tilde{\nu}$ and negative for $\nu>\tilde{\nu}$ and $\wti{G}$ has a (single) strictly positive maximum at a point $0<\tilde{\nu}_{m}<\tilde{\nu}$.
By continuity, there is a $\delta>0$ such that the function $G:[0,\infty)\to\R$ given by
 $G(\nu):= \wti{G}(\nu)- \delta \nu$
still has a strictly positive maximum, say $G_{\text{max}}>0$ and, moreover, there are $0<\nu_1<\nu_2$ such that $G(\nu)\le G_{\text{max}}/2$ implies $\nu\in [0,\nu_1]\cup[\nu_2,\infty)$.

Now choose $\tau>0$ so large that $c_f(\tau)\le \min(\delta,1/2)$. Then \eqref{eq:bound3} can be rewritten as
 \beq\label{eq:punchline}
  G(\|f\|_{\tau,\veps}) \le c_f(\tau).
 \eeq
The bound \eqref{eq:punchline} is our main tool, since it allows us to deduce the \emph{finiteness} of $\|f\|_{\tau,\veps}$ in the limit $\veps\to 0$ from the \emph{smallness} of $\|f\|_{\tau,\veps}$ in the limit\footnote{The reader who is uncomfortable with first letting $\veps\to \infty$ in order to deduce something useful for the limit $\veps\to 0$, should replace $\veps$ by $\alpha$ in the argument.} $\veps\to\infty$.
This is a two punch argument and goes as follows:

As a preparation: if we choose $\tau$ so large that $c_f(\tau)< G_{\text{max}}/2$, then \eqref{eq:punchline} shows that $\|f\|_{\tau,\veps}$ must be trapped in the region $[0,\nu_1]\cup[\nu_2,\infty)$. Note that since neither $G$ nor the $c_f(\tau)$ term on the right hand side of \eqref{eq:punchline} depend on $\veps>0$, this is uniformly in $\veps>0$.

The first punch of our argument is that the map $0<\veps\mapsto \|f\|_{\tau,\veps}$ is \emph{continuous}. By continuity, as long as $\tau$ is so large that
$c_f(\tau)< \min(\delta,1/2,G_{\text{max}}/2)$, the bound \eqref{eq:punchline} ensures that, as long as $\|f\|_{\tau,\veps_0}\le\nu_1$ \emph{for some} $\veps_0>0$, one has $\|f\|_{\tau<\veps_0}\le\nu_1$ \emph{for all} $\veps>0$, because in order to go from  $\|f\|_{\tau,\veps_0}\le\nu_1$ to $\|f\|_{\tau,\veps_1}\ge\nu_2$, the bound \eqref{eq:punchline} would have to violated at some intermediate value of $\veps>0$ by the intermediate value theorem for continuous functions.

The second, and final, punch of our argument is that by choosing $\tau$ be even larger, if necessary, we can assume that $\|\chi_\tau f\|<\nu_1$ and letting $\veps\to\infty$, one sees
 \beq
  \lim_{\veps\to\infty}\|f\|_{\tau,\veps}
    = \lim_{\veps\to\infty} \|e^{F_{\tau^{-1},\veps}}\chi_\tau f\|
    = \|\chi_\tau f\| <\nu_1
 \eeq
where the last equality follows from Lebesgue's dominated convergence convergence theorem, since $\lim_{\veps\to\infty} F_{\tau^{-1},\veps}=0$ pointwise and $F_{\tau^{-1},\veps}\le \tau^{-1}$ for $\veps\ge 1$. So for \emph{large enough} $\veps>0$ we indeed have  $\|f\|_{\tau,\veps}<\nu_1$ and thus, by the above argument, we have $\|f\|_{\tau,\veps}<\nu_1$ for all $\veps>0$. Hence
 \beq
  \|e^{F_{\mu,0}} \chi_\tau f\|=\lim_{\veps\to 0} \|f\|_{\tau,\veps} = \sup_{\veps>0}\|f\|_{\tau,\veps}\le\nu_1 ,
 \eeq
by the monotone convergence theorem. Since $x\mapsto e^{\tau^{-1}|x|}= e^{F_{\tau^{-1},0}(x)}$ is locally bounded and $1-\chi_\tau$ has support in $[-2\tau,2\tau]$, the above bound shows that
 \beq
  \|e^{F_{\tau^{-1},0}} f\|
  \le \|e^{F_{\tau^{-1},0}} (1-\chi_\tau) f\| + \|e^{F_{\tau^{-1},0}} \chi_\tau f\|
  \le e^{2}\| f\| + \nu_1 <\infty
 \eeq
which finishes the proof.
\end{proof}

%---------------------------
\subsection{The proof of exponential decay: Fourier space}
%---------------------------
In this section, we prove an $L^2$--version of the exponential decay for the Fourier transform, namely
\begin{prop} \label{prop:k-space-L2-exp-decay} Assume that $d_0$ is piecewise $W^{1,1}$ and bounded away from zero on $[0,L]$ and that $f\in H^1$ is a solution of \eqref{eq:DMS-weak} for some $\omega>0$ and $d_{\mathrm{av}}> 0$ {\rm (}respectively, $f\in L^2$ is a solution of \eqref{eq:DMS-weak} for $\d_\mathrm{av}=0${\rm)}. Then there exists $\mu>0$ such that
$x\mapsto e^{\mu|x|}\hat{f}(x)\in L^2(\R)$.
\end{prop}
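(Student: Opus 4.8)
The plan is to run the proof of Proposition~\ref{prop:x-space-L2-exp-decay} almost verbatim, making two substitutions: the Fourier representation \eqref{eq:rep-fourier-space} of $\calQ$ replaces the real-space one, and the sub-linear functional $K^2$ replaces $K^1$. First I would keep the \emph{same} cut-off $\chi$ and exponential weight $F=F_{\mu,\veps}$ from \eqref{eq:F}, but now regard them as functions of the frequency variable $k$, and set $\xi:=e^{F_{\mu,\veps}}\chi_\tau$. Then I would test the weak equation \eqref{eq:DMS-weak} against the function $g$ determined by $\hatt{g}:=\xi^2\hatt{f}$. Since for each fixed $\veps>0$ the function $\xi$ is bounded and smooth with bounded derivative, $\hatt{g}\in H^1(\R)$ whenever $\hatt{f}\in H^1$, i.e.\ $g\in H^1$ whenever $f\in H^1$ (respectively $g\in L^2$ when $f\in L^2$), so $g$ is an admissible test function.

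Next I would rewrite \eqref{eq:DMS-weak} in Fourier variables using Plancherel: $\la g,f\ra=\|\xi\hatt{f}\|^2$, while the kinetic term becomes
\bdm
  \la g',f'\ra \;=\; \int_\R k^2\,\xi(k)^2\,|\hatt{f}(k)|^2\,dk \;=\; \|\,k\xi\hatt{f}\,\|^2\;\ge\;0 .
\edm
The point here, which makes this direction \emph{easier} than the real-space one, is that in Fourier space the kinetic operator is multiplication by $k^2$, which \emph{commutes} with the multiplication operator $\xi(k)$; hence there is no iterated commutator and no IMS-type localization error. Taking real parts in \eqref{eq:DMS-weak}, dropping the non-negative term $d_{\mathrm{av}}\|k\xi\hatt{f}\|^2$, and estimating $\re\calQ(g,f,f,f)\le|\calQ(g,f,f,f)|\le\tfrac{1}{2\pi}K^2(\hatt{g},\hatt{f},\hatt{f},\hatt{f})$ via the second line of Corollary~\ref{cor:Q-K-bound}, I obtain the analogue of \eqref{eq:bound2},
\bdm
  \|\xi\hatt{f}\|^2 \;\lesssim\; K^2\big(e^{2F_{\mu,\veps}}\chi_\tau^2\,\hatt{f},\;\hatt{f},\;\hatt{f},\;\hatt{f}\big),
\edm
with implicit constant depending only on $\omega$ and $L$ --- and, crucially, with no $\la f,|\xi'|^2f\ra$ term on the right. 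Then I would establish the exact analogue of Lemma~\ref{lem:localization-error2} with $K^2$ in place of $K^1$ and $\hatt{f}$ in place of $f$: for $\mu=\tau^{-1}$,
\bdm
  K^2\big(e^{2F_{\mu,\veps}}\chi_\tau^2\hatt{f},\hatt{f},\hatt{f},\hatt{f}\big)
  \;\lesssim\; \|e^{F_{\mu,\veps}}\chi_\tau\hatt{f}\|^4+\|e^{F_{\mu,\veps}}\chi_\tau\hatt{f}\|^3
  + c_{\hatt f}(\tau)\big(\|e^{F_{\mu,\veps}}\chi_\tau\hatt{f}\|^2+\|e^{F_{\mu,\veps}}\chi_\tau\hatt{f}\|\big),
\edm
where $c_{\hatt f}(\tau)\to0$ as $\tau\to\infty$ uniformly in $\veps>0$ and the implicit constant depends only on $\|\hatt f\|=\|f\|$.

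The proof of Lemma~\ref{lem:localization-error2} transfers without change, because it uses only: sub-linearity of $K^j$; the multi-linear bound \eqref{eq:mult-lin} and the support-separation bound \eqref{eq:mult-lin-loc}, both stated in Proposition~\ref{prop:mult-lin-bounds} for $j=1,2$; the twisted-functional bound $K^j_F\le K^j$ of Theorem~\ref{thm:exp-twisted-bound}, also for $j=1,2$; and the elementary weight/support estimates for the pieces $\chi_\tau\hatt f$, $(1-\chi_\tau)\hatt f$, $\id_{[-\tau,\tau]}\hatt f$, $(1-\chi_\tau)\id_{[-\tau,\tau]^c}\hatt f$, all of which hold with $\hatt f$ in place of $f$ and, since $\|\hatt f\|=\|f\|$ by Plancherel, with identical constants.

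Finally I would combine the last two displays with $\mu=\tau^{-1}$ and write $\|\hatt f\|_{\tau,\veps}:=\|e^{F_{\tau^{-1},\veps}}\chi_\tau\hatt f\|$, obtaining
\bdm
  (1-c_{\hatt f}(\tau))\,\|\hatt f\|_{\tau,\veps}^2 - C\big(\|\hatt f\|_{\tau,\veps}^3+\|\hatt f\|_{\tau,\veps}^4\big) - c_{\hatt f}(\tau)\,\|\hatt f\|_{\tau,\veps}\;\le\;0 ,
\edm
the analogue of \eqref{eq:bound3}, indeed with right-hand side $0$ rather than $c_f(\tau)$ since there is no kinetic localization error. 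From here the two-punch continuity/intermediate-value argument from the proof of Proposition~\ref{prop:x-space-L2-exp-decay} applies verbatim: introduce the same auxiliary function $G$; for $\tau$ large, $G(\|\hatt f\|_{\tau,\veps})\le0$ traps $\|\hatt f\|_{\tau,\veps}$ in $[0,\nu_1]\cup[\nu_2,\infty)$ uniformly in $\veps>0$; continuity of $\veps\mapsto\|\hatt f\|_{\tau,\veps}$ together with $\lim_{\veps\to\infty}\|\hatt f\|_{\tau,\veps}=\|\chi_\tau\hatt f\|<\nu_1$ (valid for $\tau$ large since $\hatt f\in L^2$) and the intermediate value theorem force $\|\hatt f\|_{\tau,\veps}<\nu_1$ for all $\veps>0$; then monotone convergence gives $\|e^{\tau^{-1}|k|}\chi_\tau\hatt f\|\le\nu_1$, and since $1-\chi_\tau$ is compactly supported, $e^{\tau^{-1}|k|}\hatt f\in L^2(\R)$, which is the assertion with $\mu=\tau^{-1}$. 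I expect no serious obstacle in this direction: the only two points needing genuine care are the observation that in Fourier variables the kinetic term is a non-negative multiplication operator, so it drops out cleanly with no localization error, and the verification that $g$ with $\hatt g=\xi^2\hatt f$ is an admissible test function in \eqref{eq:DMS-weak}.
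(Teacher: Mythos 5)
Your proposal is correct and follows essentially the same route as the paper: the paper also tests \eqref{eq:DMS-weak} with $g$ given in Fourier variables by $\hatt{g}=e^{2F}\chi_\tau^2\hatt{f}$, drops the kinetic term because the Fourier multiplier commutes with $\xi$ (so no IMS-type error), bounds $\calQ$ by $K^2$ via Corollary~\ref{cor:Q-K-bound}, proves the $K^2$-analogue of Lemma~\ref{lem:localization-error2} verbatim from Theorem~\ref{thm:exp-twisted-bound} and Proposition~\ref{prop:mult-lin-bounds}, and concludes with the same continuity/bootstrap argument as in Proposition~\ref{prop:x-space-L2-exp-decay}. Your observation that this direction is simpler because the localization error disappears is exactly the remark made in the paper's proof.
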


\bpf
The proof of this Proposition is actually a bit simpler than the one for Proposition \ref{prop:x-space-L2-exp-decay}.

Let $p=-i\partial_x$ the the one-dimensional momentum operator and let the cut--off functions $\chi$, $\chi_\tau$, and $F=F_{\mu,\veps}$ from \eqref{eq:F} be as before in the proof of Proposition \ref{prop:x-space-L2-exp-decay}. Furthermore, let $\xi$ now the the \emph{operator} given by
$\xi:= e^{F(P)}\chi_\tau(P)$, which is, of course, given by multiplying by $e^{F(k)}\chi_\tau(k)$ in Fourier space and lastly, given $f\in L^2$ choose $g=\xi^2f$. If $f$ solves \eqref{eq:DMS-stationary}  then \eqref{eq:DMS-weak} yields
 \begin{align*}
  -\omega \|e^{F}\chi_\tau \hatt{f}\|^2
  &
   = -\omega\la \hatt{g},\hatt{f} \ra
   = -\omega\la g, f \ra
   =
    \d_\mathrm{av}\la g',f' \ra
    - \calQ(g,f,f,f)
 \end{align*}
Since $\xi$ is symmetric and commutes with derivatives, one has
 \bdm
  \la g',f' \ra = \la \xi^2 f', f' \ra = \la (\xi f)', (\xi f)' \ra \ge 0
 \edm
and, because $d_\mathrm{av}\ge 0$, we can drop the kinetic energy term to see
 \beq
  \|e^{F}\chi_\tau \hatt{f}\|^2 \lesssim Q(\xi^2f,f,f,f) \le |Q(\xi^2f,f,f,f)|
  \lesssim
   K^2(e^{2F}\chi_\tau^2 \hatt{f},\hatt{f},\hatt{f},\hatt{f}).
 \eeq
For $K^2$ we have a similar bound as for $K^1$ in Lemma  \ref{lem:localization-error2}:
\begin{lem}\label{lem:localization-error-Fourier} With the choice $\mu=\tau^{-1}$ one has
  \bdm
    K^2(e^{2F_{\mu,\veps}}\chi_\tau^2\hatt{f},\hatt{f},\hatt{f},\hatt{f})
     \lesssim
       \| e^{F_{\mu,\veps}}\chi_\tau \hatt{f}\|^4
       + \|e^{F_{\mu,\veps}}\chi_\tau \hatt{f}\|^3
       + c_f(\tau) \|e^{F_{\mu,\veps}}\chi_\tau \hatt{f}\|^2
       + c_f(\tau) \|e^{F_{\mu,\veps}}\chi_\tau \hatt{f}\|
  \edm
where the implicit constant depends only on $\|f\|$ and $c_f(\tau)$ denotes terms which go to zero as $\tau\to\infty$ uniformly in $\veps>0$.
\end{lem}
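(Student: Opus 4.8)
The plan is to mirror essentially verbatim the proof of Lemma \ref{lem:localization-error2}, with $K^1$ replaced by $K^2$ and $f$ replaced by its Fourier transform $\hatt f$. First I would note that since $\chi_\tau \le 1$, monotonicity of $K^2$ in its first entry (which follows directly from Definition \ref{def:mult-sub-linear}, as the integrand depends on $|h_1|$) gives
\[
 K^2(e^{2F}\chi_\tau^2\hatt f,\hatt f,\hatt f,\hatt f) \le K^2(e^{2F}\chi_\tau \hatt f,\hatt f,\hatt f,\hatt f).
\]
Then I would set $h := e^F\hatt f$ and $h_> := e^F\chi_\tau\hatt f = \chi_\tau h$, so that $h_>$ is supported in $\{|k|\ge 3\tau/2\}$, and rewrite, using Definition \ref{def:Ktwisted} of the twisted functional and then Theorem \ref{thm:exp-twisted-bound} (which applies since $F = F_{\mu,\veps}$ given by \eqref{eq:F} is symmetric and obeys the triangle inequality),
\[
 K^2(e^{2F}\chi_\tau \hatt f,\hatt f,\hatt f,\hatt f) = K^2_F(h_>,h,h,h) \le K^2(h_>,h,h,h).
\]
Thus, exactly as in Lemma \ref{lem:localization-error2}, everything reduces to proving that for $\mu = \tau^{-1}$,
\[
 K^2(h_>,h,h,h) \le C(\|h_>\|^4 + \|h_>\|^3) + c_f(\tau)(\|h_>\|^2 + \|h_>\|),
\]
with $C$ depending only on $\|f\| = \|\hatt f\|$ and $c_f(\tau)\to 0$ as $\tau\to\infty$ uniformly in $\veps>0$.

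The remaining argument is identical to the one for $K^1$: I would decompose $h = h_> + h_<$ with $h_< = (1-\chi_\tau)h$ supported in $[-2\tau,2\tau]$, further split $h_< = h_\sim + h_\ll$ with $h_\ll = \id_{[-\tau,\tau]}h$ and $h_\sim = (1-\chi_\tau)\id_{[-\tau,\tau]^c}h$, and use sub-linearity of $K^2$ to expand $K^2(h_>,h,h,h)$ into the sixteen terms $K^2(h_>,h_{a},h_{b},h_{c})$ with each of $h_a,h_b,h_c\in\{h_>,h_<\}$, then the cyclic refinements. The local $L^2$-bounds $\|h_<\|\le e^{2\mu\tau}\|f\|$, $\|h_\ll\|\le e^{\mu\tau}\|f\|$, $\|h_\sim\|\le e^{2\mu\tau}\|f_\sim\|$ (coming from the monotonicity of $F$ on $[0,\infty)$ and the support properties, with $f_\sim$ the analogous piece of $\hatt f$) are exactly as in \eqref{eq:local-h-bound} and \eqref{eq:more-local-h-bounds}. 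Crucially, the multi-sub-linear bounds of Proposition \ref{prop:mult-lin-bounds}, both the plain bound \eqref{eq:mult-lin} and the localized gain \eqref{eq:mult-lin-loc} with the $\tau^{-1/2}$ factor, are stated for \emph{both} $j=1$ and $j=2$, so every estimate used in Lemma \ref{lem:localization-error2} transfers literally; in particular the separation $\dist(\supp h_>,\supp h_\ll)\ge\tau/2$ supplies the $\tau^{-1/2}$ gain in the terms containing $h_\ll$. Setting $c_f(\tau):= \|f_\sim\| + \tau^{-1/2}$ and choosing $\mu = \tau^{-1}$ so that $e^{c\mu\tau}$ is an absolute constant throughout, the four groups of terms yield respectively $C\|h_>\|^4$, $C\|f\|\,\|h_>\|^3$, $c_f(\tau)\|h_>\|^2$, and $c_f(\tau)\|h_>\|$, proving the claim.

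There is really no new obstacle here — the point is that all the analytic content (the oscillatory-integral bounds Lemma \ref{lem:intbyparts1} and Corollary \ref{cor:small-D}, and the multi-linear and localized multi-linear estimates) has already been established uniformly for $K^1$ and $K^2$ in Proposition \ref{prop:mult-lin-bounds}, and the exponential-twist invariance Theorem \ref{thm:exp-twisted-bound} was also proved for both $j$. So the only thing to be careful about is bookkeeping: making sure the support sets of the pieces of $\hatt f$ are correctly tracked and that the choice $\mu = \tau^{-1}$ absorbs all the $e^{c\mu\tau}$ factors into constants. If I wanted to be economical I would simply remark that the proof is verbatim that of Lemma \ref{lem:localization-error2} with $(K^1,f)$ replaced by $(K^2,\hatt f)$ throughout, the only inputs being the $j=2$ cases of Proposition \ref{prop:mult-lin-bounds} and Theorem \ref{thm:exp-twisted-bound}.
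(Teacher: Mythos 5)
Your proposal is correct and follows exactly the route the paper takes: the paper's own proof simply observes that, with Theorem \ref{thm:exp-twisted-bound} and Proposition \ref{prop:mult-lin-bounds} (both stated for $j=1,2$), the argument of Lemma \ref{lem:localization-error2} carries over verbatim with $(K^1,f)$ replaced by $(K^2,\hatt{f})$. Your detailed bookkeeping (monotonicity in the first slot, the twist $K^2_F$, the decomposition into $h_>,h_<,h_\sim,h_\ll$, and the choice $\mu=\tau^{-1}$) is precisely that argument spelled out.
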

\bpf
With the bounds from Theorem \ref{thm:exp-twisted-bound} and Proposition \ref{prop:mult-lin-bounds} this is proven exactly the same way as Lemma \ref{lem:localization-error2}.
\epf
This Lemma and the above argument show
 \bdm
  \|e^{F_{\mu,\tau}}\chi_\tau \hatt{f}\|
  \le
    C\| e^{F_{\mu,\veps}}\chi_\tau \hatt{f}\|^3
    + C\|e^{F_{\mu,\veps}}\chi_\tau \hatt{f}\|^2
     + c_f(\tau)\|e^{F_{\mu,\veps}}\chi_\tau \hatt{f}\|
       + c_f(\tau)
 \edm
 for some constant $C$, or introducing $H(\nu):= \nu/2- C(\nu^3+\nu^2)$, we have
 \beq\label{eq:punchlineFourier}
  H(\|e^{F_{\mu,\veps}}\chi_\tau \hatt{f}\|)\le c_f(\tau)
 \eeq
 for $\mu=\tau^{-1}$ and all $\tau$ large enough such that $o(1)\le 1/2$. Because $\hatt{f}\in L^2$, we can deduce from \eqref{eq:punchlineFourier} with a similar argument as in the proof of Proposition \ref{prop:x-space-L2-exp-decay} that for some large enough $\tau>0$ the map $k\mapsto e^{\tau^{-1}|k|}\hatt{f}(k)\in L^2$. In fact, this is the original proof in \cite{EHL} for $d_{\mathrm{av}}=0$. This proves Proposition \ref{prop:k-space-L2-exp-decay}.
\epf

%---------------------------------
\subsection{Proof of exponential decay: Uniform bounds.}
\label{sec:uniform-exp-decay1}
%---------------------------------
Given Propositions \ref{prop:x-space-L2-exp-decay} and \ref{prop:k-space-L2-exp-decay}, the proof of Theorem \ref{thm:exp-decay} is a simple interpolation. Let $f\in L^2$ be a solution of \eqref{eq:DMS-weak}. Then Proposition \ref{prop:k-space-L2-exp-decay} implies $f\in H^\infty(\R)$, in particular, it is infinitely often differentiable. Thus for $x\ge 0$
 \bdm
 \begin{split}
  e^{\mu|x|}|f(x)|^2
  &
   = e^{\mu|x|}\bigg|
   \int_x^\infty \frac{d}{ds} |f(s)|^2\, ds\bigg| \\
  &
   \le 2\int_0^\infty e^{\mu|s|}|f(s)||f'(s)|\, ds
   \le 2\|e^{\mu|x|}f\|\|f'\| \le C<\infty
 \end{split}
 \edm
for small enough $\mu>0$. Similarly, $e^{\mu|x|}|f(x)|^2 \le C$ for small enough $\mu$ and $x\le 0$. Analogously, one gets the pointwise exponential decay of $\hatt{f}$. This proves Theorem \ref{thm:exp-decay}.

%%%%%%%%%%%%%%%%%%%%%%%%%%%%%%%%%%%%%%%%%%%%%%%%
%%%%%%%%%%%%%%%%%%%%%%%%%%%%%%%%%%%%%%%%%%%%%%%%
% Begin of the Appendix
%-----------------------------------------------
\vspace{5mm}
\appendix
\setcounter{section}{0}
\renewcommand{\thesection}{\Alph{section}}
\renewcommand{\theequation}{\thesection.\arabic{equation}}
\renewcommand{\thethm}{\thesection.\arabic{thm}}
%%%%%%%%%%%%%%%%%%%%%%%%%%%%%%%%%%%%%%%%%%%%%%%%
%%%%%%%%%%%%%%%%%%%%%%%%%%%%%%%%%%%%%%%%%%%%%%%%

%%%%%%%%%%%%%%%%%%%%%%%%%%%%%%%%%%%%%%%%%%%%%%%%

%%%%%%%%%%%%%%%%%%%%%%%%%%%%%%%%%%%%%%%%%%%%%%%%
\section{Proof of Proposition~\ref{prop:mult-lin-bounds}} \label{sec:multilinear-proof}
\setcounter{thm}{0}
\setcounter{equation}{0}
%%%%%%%%%%%%%%%%%%%%%%%%%%%%%%%%%%%%%%%%%%%%%%%%
Here we finish our argument by proving Proposition~\ref{prop:mult-lin-bounds}.  Given the bounds from Lemma \ref{lem:intbyparts1} 
and Corollary \ref{cor:small-D}, the argument is formally similar to the multi-linear bounds of
Propositions~2.5 and 2.6 in \cite{EHL}, but requires
more care.  
Many of these difficulties are explained in
Remark~\ref{rem:difficulty}.  Here we remind the interested
reader that
this need for more care arises primarily from
the extention to more general dispersion profiles. 
In particular, we note that the oscillatory integrals one 
needs to 
bound take the form of
$$
	\int e^{-i\frac{a}{4D(s)}}\frac{1}{|D(s)|}\psi(s)\, ds.
$$
In
the model case \eqref{eq:d0-simplest} when
$d_0 = \id_{[0,1)} - \id_{[1,2)}$, one has that
$D(s)=s$ and $\psi=\id_{[0,1]}$.  In allowing for
more general dispersion profiles,
we lose these simplifications and require a somewhat more delicate
use of  the bounds in Section~\ref{sec:Q}. 

First note the bounds
 \beq\label{eq:R-loc}
  \begin{split}
    \int\limits_{\{|\eta_1-\eta_2|\le R\}} \prod_{n=1}^4 |h_n(\eta_n)|\delta(\eta_1-\eta_2+\eta_3-\eta_4)\, d\eta
    &\le
    2R \prod_{n=1}^4 \|h_n\| , \\
    \int\limits_{\{|\eta_2-\eta_3|\le R\}} \prod_{n=1}^4 |h_n(\eta_n)|\delta(\eta_1-\eta_2+\eta_3-\eta_4)\, d\eta
    &\le
    2R \prod_{n=1}^4 \|h_n\| ,
  \end{split}
 \eeq
for all $R\ge 0$. For notational convenience we will abbreviate the measure 
$\delta(\eta_1-\eta_2+\eta_3-\eta_4)\, d\eta$ by $d\rho$. 
To prove \eqref{eq:R-loc} simply note that using Cauchy-Schwarz inequality and the properties of the Dirac measure
one sees, for example,
 \bdm
  \begin{split}
    \int\limits_{\{|\eta_1-\eta_2|\le R\}}
      \prod_{n=1}^4  |h_n(\eta_n)
    \, d\rho 
    \le \,& 
    \Big(
      \int\limits_{\{|\eta_1-\eta_2|\le R\}}
         |h_1(\eta_1)|^2 |h_3(\eta_3)|^2 \, d\rho
    \Big)^{1/2} \\
     &\times \Big(
      \int\limits_{\{|\eta_1-\eta_2|\le R\}}
         |h_2(\eta_1)|^2 |h_4(\eta_3)|^2 \, d\rho
    \Big)^{1/2}
     =
    2R \prod_{n=1}^4 \|h_n\| .
  \end{split}
 \edm
Now we show the bound \eqref{eq:mult-lin} for $K^1$. We split the $s$-integral in the definition \eqref{eq:def-K1} of $K^1$ into a region $\{s\in[0,L]|\, |D(s)|\lesssim |a(\eta)|\}$, where oscillations are important, and a region $\{s\in[0,L]|\, |D(s)|\gtrsim |a(\eta)|\}$, where they are not. Thus
 \begin{align}
  K^1(h_1,h_2,h_3,h_4)
  &\le
    \frac{1}{L} \int\limits_{\R^4}
      \Big| \int\limits_{ \{ |D(s)|\lesssim |a(\eta)|\}}
              e^{\frac{ia(\eta)}{4D(s)}} \frac{ds}{|D(s)|}
      \Big|
     \prod_{n=1}^4 |h_n(\eta_n)|\, d\rho \nonumber \\
  &  +
    \frac{1}{L} \int\limits_{\R^4}
      \Big| \int\limits_{\{ |D(s)|\gtrsim |a(\eta)|\}}
              e^{\frac{ia(\eta)}{4D(s)}} \frac{ds}{|D(s)|}
      \Big|
     \prod_{n=1}^4 |h_n(\eta_n)|\, d\rho  \nonumber \\
  & \lesssim
     \int\limits_{\R^4}
      \frac{1}{\max(\abs{a(\eta)},1)}
     \prod_{n=1}^4 |h_n(\eta_n)\, d\rho \label{eq:K1-part1}\\
  &  +
     \iint\limits_{\substack{(s,\eta)\in [0,L]\times\R^4 \\ |D(s)|\gtrsim |a(\eta)|}}
               \frac{1}{|D(s)|}
     \prod_{n=1}^4 |h_n(\eta_n) \, ds d\rho  \label{eq:K1-part2}
 \end{align}
where we also used the bound \eqref{eq:small-D} for the first part and the triangle inequality
for the second.
Note that $a(\eta)= 2(\eta_1-\eta_2)(\eta_2-\eta_3)$ on the set $\eta_1-\eta_2+\eta_3-\eta_4=0$. Thus,
 \bdm
 \begin{split}
   \{ |D(s)|\gtrsim |a(\eta)|\}
   &=
     \{ |\eta_1-\eta_2||\eta_2-\eta_3|\lesssim |D(s)|\} \\
   &\subset
     \{ |\eta_1-\eta_2|\lesssim \sqrt{|D(s)|}
        \text{ or }
        |\eta_2-\eta_3|\lesssim \sqrt{|D(s)|}\} .
 \end{split}
 \edm
Hence using Fubini-Tonelli and \eqref{eq:R-loc} with $R=\sqrt{|D(s)|}$ for the $d\rho=\delta(\eta_1-\eta_2+\eta_3-\eta_4)d\eta$ integration, one sees
 \begin{align}
   \eqref{eq:K1-part2}
   & \le
     \iint\limits_{\substack{|\eta_1-\eta_2|\lesssim \sqrt{|D(s)|} \\
     \text{or }|\eta_2-\eta_3|\lesssim \sqrt{|D(s)|}}}
               \frac{1}{|D(s)|}
     \prod_{n=1}^4 |h_n(\eta_n) \, d\rho ds   \nonumber \\
   &\lesssim
     \int_0^L \frac{ds}{\sqrt{|D(s)|}}
     \prod_{n=1}^4 \|h_n\|
     \lesssim
     \prod_{n=1}^4 \|h_n\| ,
 \end{align}
where we also used Corollary \ref{cor:sublevel}.

To bound \eqref{eq:K1-part1}, we split the $\eta$-integration into the parts
$\{|\eta_1-\eta_2|\le 1 \text{ or }|\eta_2-\eta_3|\le 1\}$ and
$\{|\eta_1-\eta_2|\ge 1 \text{ and }|\eta_2-\eta_3|\ge 1\}$. Thus
 \begin{align}
   \eqref{eq:K1-part1}
   \lesssim
    &
      \int\limits_{\substack{|\eta_1-\eta_2|\le 1 \\ \text{or }|\eta_2-\eta_3|\le 1}}
      \prod_{n=1}^4 |h_n(\eta_n) \, d\rho  \label{eq:K1-part1-1} \\
    &
      + \int\limits_{\substack{|\eta_1-\eta_2|\ge 1 \\ \text{and }|\eta_2-\eta_3|\ge 1}}
      \frac{1}{|\eta_1-\eta_2||\eta_2-\eta_3|}
     \prod_{n=1}^4 |h_n(\eta_n)| \, d\rho \label{eq:K1-part1-2}
 \end{align}
The bound \eqref{eq:R-loc} with $R=1$ immediately shows
$  \eqref{eq:K1-part1-1}
   \lesssim \prod_{n=1}^4 \|h_n\|.$
On the other hand, the Cauchy-Schwarz inequality yields
 \begin{align}
   \eqref{eq:K1-part1-2}
   \le \,
    &
     \Big(
       \int\limits_{\substack{|\eta_1-\eta_2|\ge 1 \\ \text{and }|\eta_2-\eta_3|\ge 1}}
         \frac{1}{|\eta_1-\eta_2|^2|\eta_2-\eta_3|^2}
       |h_1(\eta_1)|^2\, d\rho
     \Big)^{1/2}
      \Big(
        \int\limits_{\R^4}
        |h_2(\eta_2)|^2|h_2(\eta_3)|^2|h_2(\eta_4)|^2
        \, d\rho
      \Big)^{1/2} \nonumber \\
    &
      = 2 \prod_{n=1}^4\|h_n\|
 \end{align}
where we also used that the Dirac measure kills the $\eta_4$, respectively, $\eta_1$ integration.
Thus
$\label{eq:K1-part1-bound}
    \eqref{eq:K1-part1}\lesssim \prod_{n=1}^4\|h_n\|$
and so \eqref{eq:mult-lin} holds for $ j=1$.

To bound $K^2$, simply note that by Lemma \ref{lem:intbyparts1} we have
  \beq
    \begin{split}
      K^2(h_1,h_2,h_3.h_4)
      &
       \lesssim
        \int\limits_{\R^4}
          \frac{1}{\max(\abs{a(\eta)},1)}
          \prod_{n=1}^4 |h_n(\eta_n)|\, d\rho 
       = \eqref{eq:K1-part1}
         \lesssim
         \prod_{n=1}^4\|h_n\|
    \end{split}
  \eeq
using, in addition, \eqref{eq:K1-part1-bound}. This proves \eqref{eq:mult-lin}.
\smallskip

In the proof of \eqref{eq:mult-lin-loc} we distinguish between the case $l-m$ odd and the case $l-m$ even.  First some more notation. For a (measurable) set $A\subset \R^4$ let
 \begin{align}
  I_1(A)
   &
    :=
     \int\limits_{A}
      \frac{1}{\max(\abs{a(\eta)},1)}
     \prod_{n=1}^4 |h_n(\eta_n)|\delta(\eta_1-\eta_2+\eta_3-\eta_4)\, d\eta, \label{eq:I1} \\
  I_2(A)
   &
    :=
     \iint\limits_{\substack{(s,\eta)\in [0,L]\times A \\ |D(s)|\gtrsim |a(\eta)|}}
               \frac{1}{|D(s)|}
     \prod_{n=1}^4 |h_n(\eta_n)|\delta(\eta_1-\eta_2+\eta_3-\eta_4)\, ds d\eta . \label{eq:I2}
\end{align}
Furthermore, let $A^\tau_{l,m}:=\{ \eta\in\R^4|\, |\eta_l-\eta_m|\ge\tau \}$.

The case $l-m$ odd: First note that by symmetry it is enough to consider the case $(l,m)= (1,2)$. So assume that $\tau=\dist(\supp(h_1), \supp(h_2))>0$. As before, splitting the $s$-integral in the definition \eqref{eq:def-K1} of $K^1$ into the region $\{s\in[0,L]|\, |D(s)|\lesssim |a(\eta)|\}$, where oscillations are important, and a region $\{s\in[0,L]|\, |D(s)|\gtrsim |a(\eta)|\}$, where they are not, and using the bound from Corollary \ref{cor:small-D} on the former set we have
 \begin{align}
  K^1(h_1,h_2,h_3,h_4)
  & \lesssim
     \int\limits_{\R^4}
      \frac{1}{\max(\abs{a(\eta)},1)}
     \prod_{n=1}^4 |h_n(\eta_n)|\, d\rho
     +\iint\limits_{\substack{(s,\eta)\in [0,L]\times\R^4 \\ |D(s)|\gtrsim |a(\eta)|}}
               \frac{1}{|D(s)|}
     \prod_{n=1}^4 |h_n(\eta_n)|\, d\rho ds \nonumber\\
  &
   =   I_1(A^\tau_{1,2}) + I_2(A^\tau_{1,2})
 \end{align}
since $h_1(\eta_1)h_2(\eta_2)=0$ if $|\eta_1-\eta_2|<\tau$. In the integral defining $I_2(A^\tau_{1,2})$ we have $|D(s)|\gtrsim |a(\eta)|=2|\eta_1-\eta_2||\eta_2-\eta_3|\ge 2\tau |\eta_2-\eta_3|$. Hence, using \eqref{eq:R-loc} with $R=|D(s)|/\tau$ for the $\eta$-integration, we have
 \beq\label{eq:I2bound}
  I_2(A^\tau_{1,2})
   \lesssim
    \iint\limits_{\substack{(s,\eta)\in [0,L]\times \R^4 \\ |\eta_2-\eta_3|\lesssim |D(s)|/\tau}}
               \frac{1}{|D(s)|}
     \prod_{n=1}^4 |h_n(\eta_n)|\delta(\eta_1-\eta_2+\eta_3-\eta_4)\, d\eta ds
    \lesssim
     \frac{1}{\tau}\prod_{n=1}^4\|h_n\| .
 \eeq
To estimate $I_1(A^\tau_{1,2})$ we split $A^\tau_{1,2}= B_1\cup B_2$ with
 \begin{align*}
  B_1
  &
   := \{\eta\in\R^4:\, |\eta_1-\eta_2|\ge \tau \text{ and } |\eta_2-\eta_3| \ge 1\} \\
  B_2
  &
   := \{\eta\in\R^4:\, |\eta_1-\eta_2|\ge \tau \text{ and } |\eta_2-\eta_3| \le 1\} .
 \end{align*}
Obviously, $I_1(A^\tau_{1,2}) = I_1(B_1) + I_1(B_2)$
and, similarly as for the bound of \eqref{eq:K1-part1-2}, the Cauchy--Schwarz inequality yields
 \beq
 \begin{split}
  I_1(B_1)
    &
     =
     \int\limits_{B_1}
       \frac{1}{\max(\abs{a(\eta)},1)}
       \prod_{n=1}^4 |h_n(\eta_n)|\, d\rho\\
    &
     \lesssim
     \Big(
       \int\limits_{\substack{|\eta_1-\eta_2|\ge \tau \\ \text{and }|\eta_2-\eta_3|\ge 1}}
         \frac{1}{|\eta_1-\eta_2|^2|\eta_2-\eta_3|^2}
       |h_1(\eta_1)|^2\, d\rho
     \Big)^{1/2} 
      \Big(
        \int\limits_{\R^4}
        |h_2(\eta_2)|^2|h_2(\eta_3)|^2|h_2(\eta_4)|^2\, d\rho
      \Big)^{1/2} \nonumber\\
     &
      \lesssim \tau^{-1/2}
\prod_{n=1}^4\|h_n\|
  \end{split}
 \eeq
Since $\max(|a|,1) \ge |a|^{1/2}$, we also have
 \beq
 \begin{split}
  I_1(B_2)
   &
    \lesssim
    \int\limits_{B_2}
      \frac{1}{|\eta_1-\eta_2|^{1/2}|\eta_2-\eta_3|^{1/2}}
      \prod_{n=1}^4 |h_n(\eta_n)|\, d\rho \\
   &
    \le
    \tau^{-1/2}
    \int\limits_{B_2}
     \frac{1}{|\eta_2-\eta_3|^{1/2}}
     \prod_{n=1}^4 |h_n(\eta_n)|\, d\rho .
 \end{split}
 \eeq
Using Cauchy--Schwarz with respect to the measure $\delta(\eta_1-\eta_2+\eta_3-\eta_4)\,d\eta_1d\eta_4$, one sees
 \bdm
  \begin{split}
   \int\limits_{\R^2} & |h_1(\eta_1)||h_4(\eta_4)|\delta(\eta_1-\eta_2+\eta_3-\eta_4) \,d\eta_1 d\eta_4 \\
   &
   \le
    \Big(
      \int\limits_{\R^2} |h_1(\eta_1)|^2\delta(\eta_1-\eta_2+\eta_3-\eta_4) \,d\eta_1 d\eta_4
    \Big)^{1/2}
    \Big(
      \int\limits_{\R^2} |h_4(\eta_4)|^2\delta(\eta_1-\eta_2+\eta_3-\eta_4) \,d\eta_1 d\eta_4
    \Big)^{1/2} \\
   &= \|h_1\| \|h_4\|.
  \end{split}
 \edm
uniformly in $(\eta_2,\eta_3)\in\R^2$.
Hence
 \bdm
 \begin{split}
  \int\limits_{B_2} &
     \frac{1}{|\eta_2-\eta_3|^{1/2}}
     \prod_{n=1}^4 |h_n(\eta_n)
    \, d\rho \\
  & =
    \int\limits_{|\eta_2-\eta_3|\le 1}  \frac{|h_2(\eta_2)||h_3(\eta_3)|}{|\eta_2-\eta_3|^{1/2}}
      \Big(
        \int\limits_{\R^2} |h_1(\eta_1)||h_4(\eta_4)|\delta(\eta_1-\eta_2+\eta_3-\eta_4) \,d\eta_1 d\eta_4
      \Big)
    \,d\eta_2d\eta_3
      \\
  &
   \le
    \|h_1\| \|h_4\|
    \int\limits_{|\eta_2-\eta_3|\le 1}
     \frac{|h_2(\eta_2)||h_3(\eta_3)|}{|\eta_2-\eta_3|^{1/2}}
    \,d\eta_2d\eta_3  \\
  &
   \le
    \|h_1\| \|h_4\|
    \Big(
      \int\limits_{|\eta_2-\eta_3|\le 1} d\eta_2d\eta_3\frac{|h_2(\eta_2)|^2}{|\eta_2-\eta_3|^{1/2}}
    \Big)^{1/2}
    \Big(
      \int\limits_{|\eta_2-\eta_3|\le 1} d\eta_2d\eta_3\frac{|h_3(\eta_3)|^2}{|\eta_2-\eta_3|^{1/2}}
    \Big)^{1/2} \\
  &
   \lesssim
    \|h_1\| \|h_2\| \|h_3\| \|h_4\|
 \end{split}
 \edm
where we also used the Cauchy--Schwarz inequality with respect to the measure  $|\eta_2-\eta_3|^{1/2}\,d\eta_2d\eta_3$ in the second inequality. Thus
 \beq\label{eq:I1bound}
   I_2\lesssim \tau^{-1/2} \prod_{n=1}^4 \|h_n\|.
 \eeq
The bounds \eqref{eq:I2bound} and \eqref{eq:I1bound} together prove the bound for
$K^1$. The bound for $K^2$ follows immediately from \eqref{eq:I1bound}, since
 \beq
  K^2(h_1,h_2,h_3,h_4)\lesssim I_1(A^\tau_{1,2}) .
 \eeq
\smallskip

Now we prove the case that $l-m$ is even. By symmetry, it is enough to consider the case  $(l,m)=(1,3)$. So assume that $\tau=\dist(\supp(h_1),\supp(h_3))>0$. In this case we have
$K^1(h_1,h_2,h_3,h_4)\lesssim I_1(A^\tau_{1,3})+ I_2(A^\tau_{1,3})$ and
$K^2(h_1,h_2,h_3,h_4)\lesssim I_1(A^\tau_{1,3})$.

The triangle inequality yields $A^\tau_{1,3}\subset A^{\tau/2}_{1,2}\cup A^{\tau/2}_{2,3}$ and the bounds
\eqref{eq:I2bound} and \eqref{eq:I1bound} proven in the case $l-m$ odd now yield
 \begin{align*}
  &I_1(A^\tau_{1,3})\le I_1(A^{\tau/2}_{1,2})+I_1(A^{\tau/2}_{2,3})
  \lesssim \tau^{-1/2}\prod_{n=1}^4\|h_n\|,\\
  &I_2(A^\tau_{1,3})\le I_1(A^{\tau/2}_{1,2})+I_1(A^{\tau/2}_{2,3})
  \lesssim \tau^{-1/2}\prod_{n=1}^4\|h_n\|.
 \end{align*}
This finishes the proof of Proposition \ref{prop:mult-lin-bounds}.

%-----------------------------------------------
\noindent
\textbf{Acknowledgements: } We thank Vadim Zharnitsky for discussions and piquing our interest in the dispersion management technique. Financial support by the Alfried Krupp von Bohlen und Halbach Foundation and the National Science Foundation under grant DMS-0803120 (D.~H.) and grant DMS-0900865, the American Mathematical Society and the Simons Foundation (W.~R.~G.) is gratefully acknowledged.

%-----------------------------------------------

%%%%%%%%%%%%%%%%%%%%%%%%%%%%%%%%%%%%%%%%%%%%%%%%
%-----------------------------------------------
% End of appendix
%-----------------------------------------------
\renewcommand{\thesection}{\arabic{chapter}.\arabic{section}}
\renewcommand{\theequation}{\arabic{chapter}.\arabic{section}.\arabic{equation}}
\renewcommand{\thethm}{\arabic{chapter}.\arabic{section}.\arabic{thm}}
%%%%%%%%%%%%%%%%%%%%%%%%%%%%%%%%%%%%%%%%%%%%%%%%
%%%%%%%%%%%%%%%%%%%%%%%%%%%%%%%%%%%%%%%%%%%%%%%%

%%%%%%%%%%%%%%%%%%%%%%%%%%%%%%%%%%%%%%%%%%%%%%%%%%%5

\begin{thebibliography}{10}

\bibitem{AB98} M.\ J.\ Ablowitz and G.\ Biondini,
    \textit{Multiscale pulse dynamics in communication systems
    with strong dispersion management.}
    Opt.\ Lett.\ \textbf{23} (1998), 1668--1670.


\bibitem{CGTD93}  A.~R.~Chraplyvy, A.~H.~Gnauck,  R.~W.~Tkach, R.~M.~Derosier,
    \textit{$8\times10$ Gb/s transmission through 280 km of dispersion-managed fiber.}
    IEEE Phot.\ Tech.\ Lett.\ \textbf{5} (1993), 1233--1235.


\bibitem{Chraplyvy-etal95}
    A.~R.~Chraplyvy,   A.~H.~Gnauck,  R.~W.~Tkach, R.~M.~Derosier, E.~R.~Giles, B.~M.~Nyman,
    G.~A.~Ferguson, J.~W.~Sulhoff, J.~L.~Zyskind,
    \textit{One-third terabit/s transmission through 150 km of dispersion-managed fiber,}
    Photonics Technology Letters, IEEE (1995), \textbf{7},  Issue: 1, 98--100.


\bibitem{CFKS}
    Hans L. Cycon, Richard G. Froese, Werner Kirsch, Barry Simon.
    \textit{Schr\"odinger Operators: With Applications to Quantum Mechanics and Global Geometry.}
    Texts and Monographs in Physics,
    Theoretical and Mathematical Physics,
    Springer Study Edition. Springer, 2008.


\bibitem{EHL}
    Erdo\smash{\u{g}}an, M.~B., Hundertmark, D., and Lee, Y.-R.
    \emph{Exponential decay of dispersion managed solitons for vanishing average
    dispersion}.  Math.\ Res.\ Lett.\ \textbf{18} (2011), no.\ 1, 11--24.


\bibitem{Foschi} D.~Foschi,
    \textit{Maximizers for the Strichartz inequality.}
    J.\ Eur.\ Math.\ Soc.\ \textbf{9} (2007), 739--774.


\bibitem{GT96a} I.\ Gabitov and S.K.\ Turitsyn,
    \textit{Averaged pulse dynamics in a cascaded transmission system
    with passive dispersion compensation.}
    Opt.\ Lett.\  \textbf{21} (1996), 327--329.

\bibitem{GT96b} I.\ Gabitov and S.K.\ Turitsyn,
    \textit{Breathing solitons in optical fiber links.}
    JETP Lett.\ \textbf{63} (1996) 861.


\bibitem{Grafakos-classical-Fourier-analysis}
     Grafakos, L. \textit{Classical Fourier Analysis.} Second edition.
     Graduate Texts in Mathematics, Springer 2010.



\bibitem{HL09} D.\ Hundertmark, and Y.-R.\ Lee,
    \textit{Decay estimates and smoothness for solutions of the dispersion managed  non-linear Schr\"odinger equation.}
    Comm.\ Math.\ Phys.\ \textbf{286} (2009), no.\ 3, 851--873.



\bibitem{HLexistence}
    Hundertmark, D., Lee, Y.-R.
    \textit{On non-local variational problems with lack of compactness related to non-linear optics.}
    J.\ Nonlinear Sci.\ \textbf{22} (2012), no.\ 1, 1--38.


\bibitem{HZ06} D.~Hundertmark and V.~Zharnitsky,
    \textit{On sharp Strichartz inequalities for low dimensions.}
    International Mathematics Research Notices, vol.\ 2006, Article ID 34080, 18 pages, 2006.
    doi:10.1155/IMRN/2006/34080


\bibitem{KH97} S.\ Kumar and A.\ Hasegawa,
    \textit{Quasi-soliton propagation in dispersion-managed optical fibers.}
     Opt.\ Lett.\ \textbf{ 22} (1997), 372--374.


\bibitem{Kunze04} M.~Kunze,
    \textit{On a variational problem with lack of compactness related to
    the Strichartz inequality.}
    Calc.\ Var.\ Partial Differential Equations\
    \textbf{19}  (2004),  no.\ 3, 307--336.


\bibitem{KMZ05}
    M.~Kunze, J.~Moeser, and V.~Zharnitsky,
    \textit{Ground states for the higher-order dispersion managed NLS equation in
    the absence of average dispersion,}
    J.\ Differential Equations \textbf{209} (2005), no.\ 1, 77--100.


\bibitem{Kurtzge93} Kurtzke,
    \textit{Suppression of fiber nonlinearities by appropriate
    dispersion management.} IEEE Phot.\ Tech.\ Lett.\ \textbf{5} (1993), 1250--1253.



\bibitem{LK98} T.\ Lakoba and D.J.\ Kaup,
    \textit{Shape of the stationary pulse in the strong
    dispersion management regime.}
    Electron.\ Lett.\ \textbf{34} (1998), 1124--1125.


\bibitem{LandauLifshitz}
    L.~D.\ Landau and E.M.\ Lifshitz,
    \textit{Course of theoretical physics. Vol. 1. Mechanics.}
    Third edition. Pergamon Press, Oxford-New York-Toronto, Ont., 1976.


\bibitem{LKC80} C.\ Lin, H.\ Kogelnik, and L.\ G.\ Cohen,
    \textit{Optical pulse equalization and low dispersion transmission in singlemode
            fibers in the 1.3--1.7 $\mu$m spectral region.}
    Opt.\ Lett.\ \textbf{5} (1980), 476--478.


\bibitem{Lushnikov04} P.\ M.\ Lushnikov, {\em Oscillating tails of dispersion managed soliton,}
 JOSA B {\bf 21} (2004), 1913--1918.



\bibitem{MM99} P.\ V.\ Mamyshev, N.\ A.\ Mamysheva,
    \textit{Pulseoverlapped dispersion-managed data transmission and intrachannel four-wave mixing,} Opt.\ Lett.\ {\bf 24} (1999), 1454--1456.


\bibitem{Marconi} \url{http://www.marconisociety.org/events/2009marconiprize.html}


\bibitem{Mitschke}
    Mitschke, F.
    \textit{Fiber Optics: Physics and Technology.}  Springer--Verlag, Heidelberg, 2010.


\bibitem{MGLWXK03} L.~F.\ Mollenauer, A.\ Grant, X.\ Liu, X.\ Wei, C.\ Xie, and I.\ Kang,
    \textit{Experimental test of dense wavelengthdivision multiplexing using novel,
    periodic-group-delaycomplemented dispersion compensation and
    dispersionmanaged solitons.}
    Opt.\ Lett.\ \textbf{28} (2003), 2043--2045.

\bibitem{MMGNMG-NV99} L.~F. Mollenauer, P.~V. Mamyshev, J. Gripp, M.~J. Neubelt, N.
    Mamysheva, L. Gr\"uner-Nielsen, and T. Veng,
    \textit{Demonstration of massive wavelength-division multiplexing over
    transoceanic distances by use of dispersionmanaged solitons.}
    Opt.\ Lett.\ \textbf{ 25} (1999), 704--706.


\bibitem{MN03} Jerome V.\ Moloney, Alan C.\ Newell.
    \textit{Nonlinear optics}. Westview Press, 2003.


\bibitem{Stanislavova05} M.~Stanislavova,
    \textit{Regularity of ground state solutions of DMNLS equations.}
    J.\ Diff.\ Eq.\ \textbf{210} (2005), no.\ 1, 87--105.


\bibitem{Strichartz}%[St77]
    R.~S.~Strichartz,
    \textit{Restrictions of Fourier transforms to quadratic surfaces and
        decay of solutions of wave equations,}
    Duke Math.\ J.\ \textbf{44} (1977), 705--714.



\bibitem{SulemSulem}
    C.~Sulem and P.-L.~Sulem,
    \textit{The non-linear Schr\"odinger equation. Self-focusing and wave collapse.}
    Applied Mathematical Sciences, \textbf{139}. Springer-Verlag, New York, 1999.


\bibitem{TBF12}
    S.~K.\ Turitsyn, B.~G.\ Brandon, and P.~F.\ Mikhail,
    \textit{Dispersion-managed solitons in fibre systems and lasers},
    Physics Reports 521 (2012) 135–203


\bibitem{TDNMSF99}
    S.~K.\ Turitsyn, N.~J.\ Doran, J.~H.~B.\ Nijhof, V.~K.\ Mezentsev,
    T.\ Sch\"afer, and W.\ Forysiak,
    in \textit{Optical Solitons: Theoretical challenges and industrial perspectives, }
    V.~E.\ Zakharov and S.\ Wabnitz, eds.\ (Springer Verlag, Berlin, 1999), p.\ 91.

\bibitem{Turitsynetal03} S.\ K.\ Turitsyn, E.\ G.\ Shapiro, S.\ B.\ Medvedev, M.\ P.\ Fedoruk,
    and V.\ K.\ Mezentsev,
    \textit{Physics and mathematics of dispersion-managed optical solitons,}
    Comptes Rendus Physique, Acad\'emie des sciences/\'Editions scientifiques
    et m\'edicales \textbf{4} (2003), 145--161.


\bibitem{ZGJT01}
    V.\ Zharnitsky, E.\ Grenier, C.~K.~R.~T. Jones, and S.~K.\ Turitsyn,
    \textit{Stabilizing effects of dispersion management,}
    Physica D \textbf{152-153} (2001), 794--817.

\end{thebibliography}
\end{document}